\newcommand{\bb}{\textbf}
\newcommand{\ol}{\overline}
\newcommand{\mc}{\mathcal}
\newcommand{\wh}{\widehat}
\newcommand{\wt}{\widetilde}
\newcommand{\mf}{\mathfrak}
\renewcommand{\AA}{\mathbb{A}}
\newcommand{\HH}{\mathbb{H}}
\newcommand{\ZZ}{\mathbb{Z}}
\newcommand{\PP}{\mathbb{P}}
\newcommand{\FF}{\mathbb{F}}
\DeclareMathOperator{\Ind}{Ind}
\DeclareMathOperator{\pr}{pr}
\DeclareMathOperator{\tr}{tr}
\DeclareMathOperator{\diag}{diag}
\DeclareMathOperator{\Divv}{Div}
\DeclareMathOperator{\coker}{coker}
\DeclareMathOperator{\Span}{Span}
\DeclareMathOperator{\res}{res}
\DeclareMathOperator{\ord}{ord}
\DeclareMathOperator{\Spec}{Spec}
\DeclareMathOperator{\cha}{char}
\DeclareMathOperator{\qlog}{qlog}
\theoremstyle{plain}
\newtheorem{Theorem}{Theorem}[section]
\newtheorem{Lemma}[Theorem]{Lemma}
\newtheorem{Corollary}[Theorem]{Corollary}
\newtheorem{Proposition}[Theorem]{Proposition}
\newtheorem{Question}[Theorem]{Question}
\theoremstyle{definition}
\theoremstyle{remark}
\newtheorem{Remark}[Theorem]{Remark}
\numberwithin{equation}{section}
\begin{document}

\title{$p$-group Galois covers of curves in characteristic $p$}
\author[J. Garnek]{J\k{e}drzej Garnek}
\address{\parbox{\linewidth}{Faculty of Mathematics and Computer Science,
		Adam Mickiewicz University,\\
		ul. Uniwersytetu Pozna\'{n}skiego~4, \mbox{61-614} Poznan, Poland \\ \mbox{}}}
\email{jgarnek@amu.edu.pl}
\subjclass[2020]{Primary 14F40, Secondary 14G17, 14H30}
\keywords{de Rham cohomology, algebraic curves, group actions,
	characteristic~$p$}
\urladdr{http://jgarnek.faculty.wmi.amu.edu.pl/}
\date{}  

\begin{abstract}
	We study cohomologies of a curve with an action of a finite $p$-group over a field of characteristic $p$.
	Assuming the existence of a certain ``magical element'' in the function field of the curve, we compute the equivariant structure of the module of holomorphic differentials and the de Rham cohomology, up to certain local terms. We show 
	that a generic $p$-group cover has a ``magical element''.
	As an application we compute the de Rham cohomology
	of a curve with an action of a finite cyclic group of prime order.
\end{abstract}

\maketitle
\bibliographystyle{plain}
\vspace{-3em}
\section{Introduction} \label{sec:intro}

Let $X$ be an algebraic curve with an action of a finite group $G$ over an algebraically closed field $k$.
Studying the $k[G]$-module structure of cohomologies of $X$ is a natural and fundamental topic in algebraic geometry.
In the classical  case,  that is, over the field of complex numbers, the equivariant structure
of the module of holomorphic differentials was completely determined  by  Chevalley and Weil, cf. \cite{Chevalley_Weil_Uber_verhalten}. Their result remains valid when $\cha k \nmid \# G$ (see e.g. \cite{Ellingsrud_Lonsted_Equivariant_Lefschetz}). \\

When $\cha k = p > 0$ and $p| \# G$, the structure of $H^0(X, \Omega_X)$ becomes much more complicated. Most of previous results on this subject concern either the case when the
ramification is under control (cf. e.g. \cite{Kani_Galois_module}, \cite{Nakajima_Galois_module}, \cite{chinburg_epsilon_constants} for the case of tame ramification and~\cite{Kock_galois_structure} for the case of a weakly ramified action) or focus on some special
groups (see e.g.~\cite{Valentini_Madan_Automorphisms} for the case of cyclic groups, \cite{WardMarques_HoloDiffs} for abelian groups or~\cite{Bleher_Chinburg_Kontogeorgis_Galois_structure} for groups with a cyclic Sylow subgroup).
Other results compute the structure of the space of global sections of
an invertible sheaf of sufficiently large degree (cf. \cite{Borne_Cohomology_of_G_sheaves}).
Even less is known about the equivariant structure of the de Rham cohomology.
In the tame ramification case the de Rham--Euler characteristic class has been computed in the $K$-theory category (cf.~\cite{Chinburg1994} and~\cite{chinburg_epsilon_constants}).
There are also results concerning the de Rham cohomology of Deligne--Lusztig curves (cf. \cite{Lusztig_Coxeter_orbits}), of Hermitian curves (cf. \cite{Dummigan_95}, \cite{Dummigan_99} and \cite{Haastert_Jantzen_Filtrations})
and of Suzuki curves (cf. \cite{Liu_decomposition_suz}, \cite[p. 2535]{Gross_Rigid_local_systems_Gm} and \cite{Malmskog_Pries_Weir_dR_Suzuki}). In general, it is known that the Hodge--de Rham exact sequence does not split (see \cite{Hortsch_canonical_representation}, \cite{KockTait2018} for explicit examples and \cite[Main Theorem]{Garnek_equivariant} for my result that yields a more precise criterion).
One should also mention, that there are several results
concerning the structure of $H^1_{dR}(X)$ as a Dieudonn\'{e} module for a curve~$X$ with a $\ZZ/p$-action, cf. \cite{PriesZhu_p_rank_AS} and~\cite{Booher_Cais_a_numbers}.\\

In this paper we would like to propose a new point of view on this problem.
Keep the above notation. Recall that the Hodge--de Rham spectral sequence of~$X$ degenerates on the first page,
leading to the exact sequence:
\begin{equation} \label{eqn:intro_hodge_de_rham_se}
	0 \to H^0(X, \Omega_X) \to H^1_{dR}(X) \to H^1(X, \mc O_X) \to 0,
\end{equation}
which we call the Hodge--de Rham exact sequence. Our previous article \cite{Garnek_equivariant} concerned
the equivariant behaviour of the exact sequence \eqref{eqn:intro_hodge_de_rham_se}. In particular, we have proven that
under some mild assumptions the difference of the dimensions of $G$-invariant subspaces 
is a sum of local terms indexed by the points of the curve $X$:
\begin{equation} \label{eqn:decomposition_of_defect}
	\dim_k H^0(X, \Omega_X)^G + \dim_k H^1(X, \mc O_X)^G - \dim_k H^1_{dR}(X)^G = \sum_{P \in X(k)} \delta_P
\end{equation}
(cf. \cite[Proposition 3.1]{Garnek_equivariant}). The local term $\delta_P$ depends only on the completed local ring $\wh{\mc O}_{X, P}$ and measures the ramification of the action at the given point $P \in X(k)$. For example,
one can prove that $\delta_P = 0$ if and only if 
$G_{P, 2}$, the second ramification group at $P$, vanishes (cf. \cite[proof of Main Theorem]{Garnek_equivariant}).

The formula~\eqref{eqn:decomposition_of_defect} is surprising, as the left hand side is defined globally
and the right hand side is a sum of local terms. We conjecture that the reason behind this phenomenon is that
the Hodge cohomology $H^1_{Hdg}(X) := H^0(X, \Omega_X) \oplus H^1(X, \mc O_X)$ and the de Rham cohomology $H^1_{dR}(X)$
decompose as $k[G]$-modules into certain global and local parts:
\begin{align}
	H^1_{Hdg}(X) &\cong \textrm{(global part)} \oplus \bigoplus_{P \in X(k)} H^1_{Hdg, P}, \label{eqn:decomposition_Hdg}\\
	H^1_{dR}(X) &\cong \textrm{(global part)} \oplus \bigoplus_{P \in X(k)} H^1_{dR, P}. \label{eqn:decomposition_dR}
\end{align}
More precisely, the global part should depend only on the ``topology'' of the cover $\pi : X \to X/G$ (i.e. on the curve $Y$ and the stabilizer subgroups) and be the same for both cohomologies.
The local parts $H^1_{Hdg, P}$, $H^1_{dR, P}$ should depend only on the completed local rings $\wh{\mc O}_{X, P}$ of a given point~$P$. The decompositions~\eqref{eqn:decomposition_Hdg} and~\eqref{eqn:decomposition_dR} would yield a formula of the form~\eqref{eqn:decomposition_of_defect} with:
\[
\delta_P := \dim_k (H^1_{Hdg, P})^G - \dim_k (H^1_{dR, P})^G.
\]
The importance of decompositions~\eqref{eqn:decomposition_Hdg} and~\eqref{eqn:decomposition_dR} is that they would allow computation of the equivariant structure of the cohomologies using some local considerations.
The goal of this article is to prove a weaker form of the decompositions~\eqref{eqn:decomposition_Hdg} and~\eqref{eqn:decomposition_dR} for a generic $p$-group cover.\\

Throughout this paper, we work in the following setup (unless specified otherwise).
Suppose that $G$ is a finite $p$-group and $k$ is an algebraically closed field
of characteristic~$p$. Let $X$ be as above and denote by $\pi : X \to Y := X/G$ the canonical morphism. Let $g_Y$ be the genus of $Y$ and $B \subset Y(k)$ -- the branch locus of $\pi$. For any ${P \in X(k)}$ denote by $G_{P, i}$ the $i$-th ramification group
at $P$ and let:
\[
d_{P}:= \sum_{i \ge 0} (\# G_{P, i} - 1), \quad 
d_{P}' := \sum_{i \ge 1} (\# G_{P, i} - 1), \quad
d_{P}'' := \sum_{i \ge 2} (\# G_{P, i} - 1)
\]
(note that $d_P$ is the exponent of the different of $k(X)/k(Y)$ at $P$). We assume that the cover $\pi$ satisfies the following assumptions:
\begin{enumerate}[(A)]
	\item \label{enum:A} the stabilizer $G_P$ of $P$ in $G$ is a normal subgroup of $G$ for every $P \in X(k)$,
	
	\item \label{enum:B} there exists a function $z \in k(X)$ (a ``magical element'') satisfying $\ord_P(z) \ge -d_P'$
	for every $P \in X(k)$ and $\tr_{X/Y}(z) \neq 0$.
\end{enumerate}
We discuss~\ref{enum:A} and~\ref{enum:B} below, in Subsection \emph{Constructing magical elements}. 
By the assumption~\ref{enum:A}, for $Q \in Y(k)$ we may denote $G_Q := G_P$ and $d_Q := d_P$
for any $P \in \pi^{-1}(Q)$. Let $I_G := \{ \sum_{g \in G} a_g g \in k[G] : \sum_{g \in G} a_g = 0 \}$ be the augmentation ideal of the group~$G$
and let $J_G := I_G^{\vee}$ be the dual of $I_G$. For any subgroup $H \le G$
we consider also the relative augmentation ideal $I_{G, H} := \Ind^G_H \, I_H$
and its dual $J_{G, H} := \Ind^G_H \, J_H$. Finally, the $k[G]$-modules
$I_{X/Y}$ and $J_{X/Y}$ are defined by:
\begin{align*}
	I_{X/Y} &:= \ker \left( \sum : \bigoplus_{Q \in Y(k)} I_{G, G_Q} \to I_G \right),\\
	J_{X/Y} &:= \coker \left( \diag : J_G \to  \bigoplus_{Q \in Y(k)} J_{G, G_Q} \right).
\end{align*}
The following is the main result of this paper. It is proven in Sections~\ref{sec:OmegaX} and~\ref{sec:dR}.
\begin{Theorem} \label{thm:main_thm} 
	Keep the above assumptions. We have the following isomorphisms of $k[G]$-modules:
	\begin{align*}
		H^0(X, \Omega_X) &\cong k[G]^{\oplus g_Y} \oplus I_{X/Y} \oplus \bigoplus_{Q \in B} H^0_Q,\\
		H^1(X, \mc O_X) &\cong k[G]^{\oplus g_Y} \oplus J_{X/Y} \oplus \bigoplus_{Q \in B} H^1_Q,\\
		H^1_{dR}(X) &\cong k[G]^{\oplus 2 \cdot g_Y} \oplus I_{X/Y} \oplus J_{X/Y} 
		\oplus \bigoplus_{Q \in B} H^1_{dR, Q},
	\end{align*}
	where $H^0_Q$, $H^1_Q$, $H^1_{dR, Q}$ are certain $k[G]$-modules that depend only on the rings $\mc O_{X, Q}$ and on the element $z$ (see~\eqref{eqn:H0Q}, \eqref{eqn:H1Q} and~\eqref{eqn:H1dRQ}
	for precise definitions).
\end{Theorem}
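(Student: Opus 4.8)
The plan is to build the three decompositions by the same two-step mechanism: first push the computation down to a cohomology computation on $Y$ via a suitable resolution or spectral sequence, and then separate the contribution of the unramified locus (which produces free $k[G]$-summands and the ``topological'' modules $I_{X/Y}$, $J_{X/Y}$) from the contribution of the branch points (which produces the local modules $H^0_Q$, $H^1_Q$, $H^1_{dR,Q}$). I would start with $H^0(X,\Omega_X)$, as in Section~\ref{sec:OmegaX}. The magical element $z$ from assumption~\ref{enum:B} is the key tool: since $\tr_{X/Y}(z)\neq 0$ and $z$ has poles bounded by the $d_P'$, multiplication by $z$ followed by the trace gives an explicit $k[G]$-equivariant splitting of $\pi_*$ applied to appropriate twists of $\Omega_X$. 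Concretely, one chooses a $G$-equivariant coherent subsheaf of $\pi_*\Omega_X$ supported away from $B$ that is induced from the structure sheaf of $Y$ (hence contributes $k[G]^{\oplus g_Y}$ together with the augmentation-type correction $I_{X/Y}$ coming from the stabilizers at the ramification points), and uses $z$ to show the quotient is a direct sum of the purely local pieces $H^0_Q := \pi_* \Omega_X / (\text{the induced part})$ localized at $Q$, which by construction depends only on $\widehat{\mc O}_{X,Q}$ and on $z$.

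For $H^1(X,\mc O_X)$ I would dualize: by Serre duality $H^1(X,\mc O_X)^\vee \cong H^0(X,\Omega_X)$ as $k$-vector spaces, but the $G$-action twists by the contragredient, so the decomposition of $H^0(X,\Omega_X)$ transports to one of $H^1(X,\mc O_X)$ with $I_{X/Y}$ replaced by its dual $J_{X/Y}$ and $H^0_Q$ replaced by the corresponding dual local module $H^1_Q$; the free part $k[G]^{\oplus g_Y}$ is self-dual and survives. One must check that Serre duality is $G$-equivariant in the appropriate sense (it is, because the trace pairing on $\Omega_X$ is $G$-invariant), and that the dual of the ``induced from $\mc O_Y$'' part is again of the same shape — this is where the identities $J_{G,H} = (I_{G,H})^\vee$ and the definitions of $I_{X/Y}$, $J_{X/Y}$ as a kernel and a cokernel of dual maps do the bookkeeping.

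The de Rham statement is the genuinely new part, handled in Section~\ref{sec:dR}. Here the Hodge--de Rham exact sequence~\eqref{eqn:intro_hodge_de_rham_se} tells us $H^1_{dR}(X)$ is an extension of $H^1(X,\mc O_X)$ by $H^0(X,\Omega_X)$; combining the two decompositions already obtained, we get that $H^1_{dR}(X)$ is an extension of $k[G]^{\oplus g_Y}\oplus J_{X/Y}\oplus\bigoplus_Q H^1_Q$ by $k[G]^{\oplus g_Y}\oplus I_{X/Y}\oplus\bigoplus_Q H^0_Q$. The free summands contribute $k[G]^{\oplus 2g_Y}$ (any extension of free modules over $k[G]$ by free modules is free — $k[G]$ is self-injective), and one must argue that the extension respects the splitting into the topological part $I_{X/Y}\oplus J_{X/Y}$ and the local parts. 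The cleanest route is to realize $H^1_{dR}(X)$ directly as hypercohomology of the de Rham complex $\mc O_X \to \Omega_X$, push forward to $Y$, and run the same ``induced part versus local part'' argument simultaneously on both terms of the complex, using $z$ to split off the induced part coherently; the local pieces then assemble into $H^1_{dR,Q}$, which is by construction an extension of $H^1_Q$ by $H^0_Q$ depending only on $\widehat{\mc O}_{X,Q}$ and $z$.

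The main obstacle I anticipate is showing that the extension computing $H^1_{dR}(X)$ does not mix the global (topological) summand with the local summands — i.e. that the hypercohomology genuinely splits as stated and not merely up to a filtration. The danger is a nonzero connecting map entangling $I_{X/Y}$ with some $H^1_Q$, or $H^0_Q$ with $J_{X/Y}$. The way I would control this is to choose the induced subcomplex so that its cokernel is a complex of sheaves supported set-theoretically on $B$; then the long exact sequence forces the global part to be a direct summand of the hypercohomology, with complement supported over $B$, and the latter decomposes as a product over $Q\in B$ because the supports are disjoint. The remaining subtlety — that the global part is exactly $k[G]^{\oplus 2g_Y}\oplus I_{X/Y}\oplus J_{X/Y}$ and not some other extension — follows from the unramified-descent computation plus self-injectivity of $k[G]$, which kills any would-be nontrivial extension class between the induced part and itself.
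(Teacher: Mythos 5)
Your high-level plan (use $z$ to compare $\pi_*$ of the cohomology sheaves with induced sheaves from $Y$, split off a global part, localize the rest at $B$, dualize for $H^1(X,\mc O_X)$) is the right spirit, but the mechanism you describe does not produce the stated decomposition, and the two steps that carry the actual content of the paper's proof are missing. First, for $H^0(X,\Omega_X)$: there is no induced subsheaf of $\pi_*\Omega_X$ whose global sections give $k[G]^{\oplus g_Y}\oplus I_{X/Y}$. The natural inclusion is $\bigoplus_g g^*(z)\Omega_Y\subset\pi_*\Omega_X$ (Lemma~\ref{lem:inclusions_of_modules}), but $H^1(Y,\Omega_Y)\neq 0$, so the sequence of global sections is not short exact and the torsion quotient does not split off; moreover $I_{X/Y}$ is not the cohomology of any induced sheaf — it is cut out by residue conditions at the branch points involving the stabilizers $G_Q$. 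The paper therefore works with $\Omega_X(R)$ and $\bigoplus_g g^*(z)\Omega_Y(B)$ (where $H^1(Y,\Omega_Y(B))=0$), and recovers $H^0(X,\Omega_X)$ from $H^0(X,\Omega_X(R))$ via the residue map $\res_G$, shown in Proposition~\ref{prop:log_diffs_and_diffs} (using Lemma~\ref{lem:main_lemma_Omega_Y} and explicitly constructed forms $\eta_Q$ as a section) to be a split surjection onto $I_{X/Y}$, resp.\ $k[G]_B$, with the \emph{same} kernel; the decomposition then follows only after a Krull--Schmidt cancellation of $k[G]^{\oplus(\#B-1)}$. Without this comparison step your argument cannot identify the global summand as $k[G]^{\oplus g_Y}\oplus I_{X/Y}$, nor your local quotient with the modules $H^0_Q$ of~\eqref{eqn:H0Q}.

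Second, for $H^1_{dR}(X)$ the fix you propose for the obstacle you correctly identify — ``choose the induced subcomplex so that its cokernel is supported on $B$'' — is not available: the induced comparison sheaves sit on opposite sides in the two degrees ($\bigoplus_g g^*(z)\Omega_Y(B)\subset\pi_*\Omega_X(R)$ but $\pi_*\mc O_X(-R)\subset\bigoplus_g g^*(z^\vee)\mc O_Y(-B)$, Lemma~\ref{lem:inclusions_of_modules2}), and the de Rham differential does not carry induced terms to induced terms (e.g.\ $d(g^*(z^\vee)f)$ involves $dg^*(z^\vee)$), so there is no subcomplex of $\pi_*\Omega_X^{\bullet}$ of the kind you need. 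This is exactly why the paper introduces the logarithmic and quasilogarithmic de Rham complexes, extends $\res_G$ to $H^1_{dR}(X)$ and lifts $\res_G^{\vee}$ to the (quasi)logarithmic cohomology (Proposition~\ref{prop:extending_resG}), and why it needs the technical core, Lemma~\ref{lem:xiQ_existence}, decomposing $dz^{\vee}=\sum_{Q\in B}\xi_Q$ into local components with prescribed regularity and trace properties; the global/local separation is then obtained by cartesian-square (equal kernel/cokernel) arguments plus projectivity and injectivity of $k[G]$ applied to the quasilogarithmic cohomology, not by splitting a complex of sheaves. Note also that even the paper does not prove the sheaf-level decomposition you are implicitly assuming — that is exactly the conjectural statement \eqref{eqn:decomposition_dR}, of which the theorem is only a ``stable'' version obtained after cancellation — so the step where you appeal to an induced subcomplex with torsion cokernel is a genuine gap, not a routine verification.
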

We explain now the relation between Theorem~\ref{thm:main_thm} and the conjectural decompositions~\eqref{eqn:decomposition_Hdg}, \eqref{eqn:decomposition_dR}.
The terms $k[G]^{\oplus g_Y} \oplus I_{X/Y}$, $k[G]^{\oplus g_Y} \oplus J_{X/Y}$, $k[G]^{\oplus 2g_Y} \oplus I_{X/Y} \oplus J_{X/Y}$ play role of global parts, as they
depend only on $g_Y$, $\# B$ and $\{ G_Q : Q \in B \}$. The candidates for local parts are the modules $H^0_Q$, $H^1_Q$, $H^1_{dR, Q}$. However, they depend on the element~$z$ from the condition~\ref{enum:B}. Conjecturally, one should be able to describe them only in terms of $\wh{\mc O}_{X, Q}$. We expect also that $\dim_k H^0_Q = \frac{1}{2} \# \pi^{-1}(Q) \cdot d_Q''$ for every $Q \in B$ (cf. Lemma~\ref{lem:properties_H0Q_H1Q}~(3) for a motivation).\\

As an application of Theorem~\ref{thm:main_thm}, we give a description of cohomologies of $\ZZ/p$-covers (cf. Section~\ref{sec:Zp}). 
Let $J_i$ denote the unique indecomposable $k[\ZZ/p]$-module of dimension~$i$ for $i = 1, \ldots, p$.
\begin{Corollary} \label{cor:cohomology_of_Zp}
	Keep the above assumptions with $G = \ZZ/p$.
	Suppose that $\pi$ has a global standard form (cf. Subsection~\ref{subsec:gsf}). Then, as $k[\ZZ/p]$-modules:
	\begin{align*}
		H^0(X, \Omega_X) &\cong H^1(X, \mc O_X) \cong J_{p}^{\oplus g_Y} \oplus J_{p-1}^{\oplus (\# B - 1)} \oplus
		\bigoplus_{Q \in B} \bigoplus_{i = 1}^{p-1} J_i^{\oplus \alpha_Q(i)}, \\
		H^1_{dR}(X) &\cong J_{p}^{\oplus 2 \cdot g_Y} \oplus J_{p-1}^{\oplus \alpha},
	\end{align*}
	where $g_Y$ is the genus of $Y$, $B \subset Y(k)$ denotes the branch locus of $\pi$, $m_Q := d_Q'/(p-1)$ and:
	\begin{align*}
		\alpha_Q(i) &:= \left \lceil \frac{m_Q \cdot (i+1)}{p} \right \rceil
		- \left \lceil \frac{m_Q \cdot i}{p} \right \rceil,\\
		\alpha &:= \sum_{Q \in B} (m_Q + 1) - 2.
	\end{align*}
\end{Corollary}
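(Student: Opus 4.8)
The plan is to derive Corollary~\ref{cor:cohomology_of_Zp} from Theorem~\ref{thm:main_thm} by specializing to $G = \ZZ/p$ and computing all the terms appearing there explicitly. First I would analyze the ``global'' contributions. Since $k[\ZZ/p]$ is a local ring with indecomposables $J_1, \ldots, J_p$, we have $k[\ZZ/p] = J_p$, so the term $k[G]^{\oplus g_Y}$ becomes $J_p^{\oplus g_Y}$. For $I_{X/Y}$ and $J_{X/Y}$ one uses that for $G = \ZZ/p$ the (relative) augmentation ideal $I_{G, G_Q}$ is either $0$ (when $G_Q$ is trivial, i.e. $Q \notin B$) or equals $I_G \cong J_{p-1}$ (when $G_Q = G$, which holds for all $Q \in B$ by assumption~\ref{enum:A} together with the fact that the only subgroups of $\ZZ/p$ are $0$ and $G$). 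Hence $\bigoplus_{Q \in Y(k)} I_{G, G_Q} \cong J_{p-1}^{\oplus \# B}$, and $I_{X/Y} = \ker(\sum : J_{p-1}^{\oplus \# B} \to J_{p-1})$. Since the summation map is surjective and $k[\ZZ/p]$-linear onto the indecomposable (hence ``relatively injective/projective'' enough for this purpose) module $J_{p-1}$, the kernel is $J_{p-1}^{\oplus(\# B - 1)}$; dually $J_{X/Y} \cong J_{p-1}^{\oplus(\# B - 1)}$ as well. This already accounts for the $J_p^{\oplus g_Y} \oplus J_{p-1}^{\oplus(\# B - 1)}$ part of $H^0(X,\Omega_X)$ and $H^1(X, \mc O_X)$, and for $J_p^{\oplus 2 g_Y} \oplus J_{p-1}^{\oplus 2(\# B - 1)}$ in $H^1_{dR}(X)$.

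Next I would compute the local terms $H^0_Q$, $H^1_Q$, $H^1_{dR,Q}$ for $G = \ZZ/p$ using their explicit definitions (equations~\eqref{eqn:H0Q}, \eqref{eqn:H1Q}, \eqref{eqn:H1dRQ}), under the hypothesis that $\pi$ has a global standard form. In the $\ZZ/p$ case the completed local ring at a ramified point is an Artin--Schreier extension $k[[t]]/k[[s]]$ with $s = t^{-m_Q}(\text{unit})$ and conductor $d_Q' = m_Q(p-1)$, and the ``magical element'' $z$ is essentially forced by the standard form. One computes the $k[\ZZ/p]$-module structure of $H^0_Q$ by decomposing the relevant space of differentials under the cyclic action; this is a by-now standard calculation (it is the local analogue of the Valentini--Madan / Boseck-type count), and it yields $H^0_Q \cong \bigoplus_{i=1}^{p-1} J_i^{\oplus \alpha_Q(i)}$ with the multiplicities $\alpha_Q(i)$ given by the stated ceiling-difference formula — the point being that $\sum_{i=1}^{p-1} i \cdot \alpha_Q(i) = \dim_k H^0_Q$ matches the expected value $\tfrac12 \# \pi^{-1}(Q) \cdot d_Q'' = \tfrac12 (m_Q - 1)(p-1)$ mentioned after Theorem~\ref{thm:main_thm}, and the ``staircase'' pattern of jumps in $\lceil m_Q i / p\rceil$ records exactly how these indecomposables are distributed. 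By the duality between $H^0(X, \Omega_X)$ and $H^1(X, \mc O_X)$ (Serre duality is $\ZZ/p$-equivariantly self-dual on indecomposables, as $J_i^\vee \cong J_i$), one gets $H^1_Q \cong H^0_Q$, explaining the coincidence of the first two lines of the Corollary.

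For the de Rham local term the key input is that $H^1_{dR}$ of a $\ZZ/p$-cover, after removing the free part, is built out of copies of $J_{p-1}$ only — equivalently, $H^1_{dR,Q}$ has no indecomposable summand $J_i$ with $i < p-1$ except possibly $J_p$'s, which get absorbed into the global free part. I would verify this by feeding the $\ZZ/p$ structures of $H^0_Q$ and $H^1_Q$ into the local Hodge--de Rham sequence $0 \to H^0_Q \to H^1_{dR,Q} \to H^1_Q \to 0$ and using that an extension of $J_i$ by $J_j$ in $k[\ZZ/p]$-modules is either split or is $J_{i+j}$ (when $i + j \le p$); the ``defect'' $\delta_Q = \dim (H^0_Q)^G + \dim(H^1_Q)^G - \dim(H^1_{dR,Q})^G$ is known (it equals $m_Q - 1$ here, coming from $d_Q'' = (m_Q-1)(p-1)$), and combined with $\dim_k H^1_{dR,Q} = \dim_k H^0_Q + \dim_k H^1_Q$ this pins down $H^1_{dR,Q} \cong J_{p-1}^{\oplus(m_Q+1)}$, possibly after moving a few $J_p$'s. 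Summing over $Q \in B$ and recombining with the global $J_p^{\oplus 2g_Y} \oplus J_{p-1}^{\oplus 2(\# B - 1)}$ gives $J_p^{\oplus 2 g_Y} \oplus J_{p-1}^{\oplus \alpha}$ with $\alpha = 2(\# B - 1) + \sum_Q (m_Q + 1) - \text{(correction)}$; matching this against the stated $\alpha = \sum_{Q} (m_Q + 1) - 2$ is a bookkeeping check. I expect the main obstacle to be precisely this last de Rham bookkeeping: one must carefully track how copies of $J_p$ split off versus stay as extensions when passing between the local pieces and the global free part, and confirm that no ``extra'' nontrivial extensions survive — this is where the hypothesis of a global standard form (ensuring the magical element $z$ is as simple as possible, so that the modules $H^1_{dR,Q}$ are genuinely the naive ones) does the real work.
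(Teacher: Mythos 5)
Your treatment of the global part and of $H^0_Q$, $H^1_Q$ follows the paper's route: $k[\ZZ/p]=J_p$, $I_{X/Y}\cong J_{X/Y}\cong J_{p-1}^{\oplus(\#B-1)}$, an explicit local computation of differentials under the cyclic action for $H^0_Q$ (this is exactly Proposition~\ref{prop:H0Q_for_Zp}~(1), via the uniformizer chosen in~\eqref{eqn:suitable_uniformizer}), and self-duality of the $J_i$ for $H^1_Q$. The genuine gap is in the de Rham local term. You propose to pin down $H^1_{dR,Q}$ from the local sequence of Lemma~\ref{lem:properties_IXY_H1dRQ} together with numerical invariants (total dimension and the invariant-dimension defect $\delta_Q$), but this cannot work, and your claimed answer is already internally inconsistent: since $\dim_k H^0_Q=\dim_k H^1_Q=\tfrac12 (m_Q-1)(p-1)$, the exact sequence forces $\dim_k H^1_{dR,Q}=(m_Q-1)(p-1)$, so $H^1_{dR,Q}\cong J_{p-1}^{\oplus(m_Q-1)}$ is the only candidate of the shape you want, not $J_{p-1}^{\oplus(m_Q+1)}$; your unexplained ``correction'' in the final count of $\alpha$ is precisely this error of $2$ per branch point (total $2\#B$). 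More fundamentally, even with the right numerics, knowing $\dim_k M$ and $\dim_k M^G$ (i.e.\ the number of indecomposable summands) does not determine a $k[\ZZ/p]$-module: e.g.\ dimension $2(p-1)$ with two summands could be $J_{p-1}^{\oplus 2}$ or $J_p\oplus J_{p-2}$, and excluding $J_p$-summands in $H^1_{dR,Q}$ is exactly what must be proved before ``absorbing them into the free part'' (the final statement prescribes the exact number of $J_p$'s). Your auxiliary claim that an extension of $J_i$ by $J_j$ is either split or isomorphic to $J_{i+j}$ is also false: $\dim_k\Ext^1_{k[\ZZ/p]}(J_i,J_j)=\min(i,j,p-i,p-j)$, and middle terms $J_a\oplus J_b$ with various $a+b=i+j$ occur, so the extension-class argument does not single out the module structure either. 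Your claimed value $\delta_Q=m_Q-1$ is moreover only correct when $m_Q<p$.

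What the paper does instead, and what your proposal is missing, is an explicit local computation in $H^1_{dR,Q}$: Proposition~\ref{prop:H0Q_for_Zp}~(3) exhibits the basis $\upsilon_{i,j}=\bigl(y^i\,dt/t^j,\ \tfrac{1}{(i+1)m}y^{i+1}t^{m+1-j}\bigr)$ for $0\le i\le p-2$, $2\le j\le m$, built so that it interpolates between the basis of $H^0_Q$ and a lift of the basis of $H^1_Q$ through the sequence of Lemma~\ref{lem:properties_IXY_H1dRQ}. The decisive step is then the trace computation $\tr_{X/Y}(\upsilon_{p-2,j})=(0,\tfrac1m t^{m+1-j})=0$ in $H^1_{dR,Q}$, which shows that the cyclic $k[G]$-module generated by $\upsilon_{p-2,j}$ is $\Span_k(\upsilon_{i,j}:0\le i\le p-2)\cong J_{p-1}$ rather than a free module; hence $H^1_{dR,Q}\cong J_{p-1}^{\oplus(m_Q-1)}$, and summing with $I_{X/Y}\oplus J_{X/Y}\cong J_{p-1}^{\oplus 2(\#B-1)}$ gives $\alpha=\sum_{Q\in B}(m_Q+1)-2$ with no leftover correction. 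Without some such explicit argument (a trace or socle computation distinguishing $J_{p-1}$ from $J_p$ inside $H^1_{dR,Q}$), the de Rham half of the Corollary is not established by your proposal.
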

For $H^0(X, \Omega_X)$ this result was already known (cf.~\cite[Theorem~1]{Valentini_Madan_Automorphisms}), however we don't know of any previous results regarding the de Rham cohomology.\\

Theorem~\ref{thm:main_thm} allows us to give a converse of~\cite[Main Theorem]{Garnek_equivariant}
for covers satisfying~\ref{enum:A} and~\ref{enum:B}.
\begin{Corollary} \label{cor:hdr_exact_sequence}
	Keep assumptions of Theorem~\ref{thm:main_thm}. Then the action of $G$ on $X$ is weakly ramified
	if and only if the exact sequence~\eqref{eqn:intro_hodge_de_rham_se} splits $G$-equivariantly.
\end{Corollary}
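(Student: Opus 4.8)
The plan is to combine Theorem~\ref{thm:main_thm} with the already-known direction from \cite[Main Theorem]{Garnek_equivariant}, reducing the splitting question to a purely module-theoretic comparison of the local terms. Recall that a $G$-action on $X$ is weakly ramified precisely when $G_{P,2} = 0$ for every $P \in X(k)$; equivalently, in the notation of the introduction, $d_P'' = 0$ for all $P$, i.e. $d_Q'' = 0$ for every $Q \in B$. One direction is immediate: if the action is weakly ramified, then by \cite[Main Theorem]{Garnek_equivariant} the Hodge--de Rham sequence \eqref{eqn:intro_hodge_de_rham_se} splits $G$-equivariantly. So the content is the converse.

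For the converse I would argue by contraposition: assume the action is not weakly ramified, so there is some $Q_0 \in B$ with $d_{Q_0}'' > 0$, and show that \eqref{eqn:intro_hodge_de_rham_se} does not split. The key observation is that a $G$-equivariant splitting of \eqref{eqn:intro_hodge_de_rham_se} would give an isomorphism of $k[G]$-modules $H^1_{dR}(X) \cong H^0(X, \Omega_X) \oplus H^1(X, \mc O_X)$. Plugging in the three formulas from Theorem~\ref{thm:main_thm}, the global parts $k[G]^{\oplus 2g_Y} \oplus I_{X/Y} \oplus J_{X/Y}$ match on both sides, so — using the Krull--Schmidt theorem for finitely generated $k[G]$-modules — such a splitting forces an isomorphism of the local parts
\[
\bigoplus_{Q \in B} H^1_{dR, Q} \;\cong\; \bigoplus_{Q \in B} \left( H^0_Q \oplus H^1_Q \right).
\]
In particular, comparing dimensions, a splitting would force $\dim_k H^1_{dR, Q} = \dim_k H^0_Q + \dim_k H^1_Q$ for the total over all $Q$, and more precisely (since each $H^\bullet_Q$ is supported at the single point $Q$, and one can separate the contributions, e.g. by localizing or by the decomposition already built into Theorem~\ref{thm:main_thm}) the same identity at each $Q$ individually. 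Thus it suffices to show that for a point $Q$ with $d_Q'' > 0$ one has the strict inequality $\dim_k H^1_{dR, Q} < \dim_k H^0_Q + \dim_k H^1_Q$, or more robustly that the $G$-invariants fail to match, i.e. $\dim_k(H^1_{dR,Q})^G < \dim_k(H^0_Q)^G + \dim_k(H^1_Q)^G$, which is exactly the statement that the local defect $\delta_Q$ from \eqref{eqn:decomposition_of_defect} is positive.

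Concretely, I would use \eqref{eqn:decomposition_of_defect} together with Theorem~\ref{thm:main_thm}: taking $G$-invariants in the three isomorphisms and subtracting, all the global contributions cancel (they are built from $k[G]$ and from $I_{X/Y}, J_{X/Y}$, whose invariant dimensions are the same on the two sides of \eqref{eqn:decomposition_of_defect}), leaving
\[
\sum_{P \in X(k)} \delta_P \;=\; \sum_{Q \in B} \left( \dim_k (H^0_Q)^G + \dim_k (H^1_Q)^G - \dim_k (H^1_{dR, Q})^G \right),
\]
and by matching the local supports one gets $\delta_Q = \dim_k (H^0_Q)^G + \dim_k (H^1_Q)^G - \dim_k (H^1_{dR, Q})^G$ for each $Q$. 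Since (as recalled after \eqref{eqn:decomposition_of_defect}, and proven in \cite{Garnek_equivariant}) $\delta_P = 0$ iff $G_{P,2} = 0$, and since $\delta_P \ge 0$ always, a point $Q$ with $d_Q'' > 0$ has $\delta_Q > 0$. Hence for such $Q$ the local parts cannot be isomorphic as $k[G]$-modules, so by Krull--Schmidt $H^1_{dR}(X) \not\cong H^0(X,\Omega_X) \oplus H^1(X, \mc O_X)$, and in particular \eqref{eqn:intro_hodge_de_rham_se} does not split $G$-equivariantly. This completes the contrapositive.

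The main obstacle I anticipate is the step that "separates" the local contributions point by point: a priori Theorem~\ref{thm:main_thm} only gives an isomorphism of the full direct sums $\bigoplus_{Q \in B} H^\bullet_Q$, and a $G$-equivariant splitting gives an abstract $k[G]$-isomorphism of such sums, not an obviously $Q$-by-$Q$ one. To handle this cleanly I would either (i) argue only at the level of $G$-invariant dimensions, using \eqref{eqn:decomposition_of_defect} and the additivity of $\delta$ over points — this already suffices for non-splitting, since a genuine splitting would force the total invariant dimensions to match, contradicting $\sum_Q \delta_Q > 0$ — or (ii) observe that each $H^\bullet_Q$ is defined from the semilocal ring $\mc O_{X,Q}$ and hence is naturally a direct summand cut out by the idempotent supported over $Q$ in the relevant endomorphism algebra, making the decomposition canonical. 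Option (i) is the safer route and is the one I would write up: it avoids Krull--Schmidt entirely and reduces everything to the numerical identity $\sum_{P} \delta_P = 0 \iff$ weakly ramified, which is already available.
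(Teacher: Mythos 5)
There is a genuine gap: you have the two directions the wrong way around. The implication you dismiss as ``immediate from \cite[Main Theorem]{Garnek_equivariant}'' --- weak ramification implies that \eqref{eqn:intro_hodge_de_rham_se} splits --- is precisely the \emph{new} content of this corollary: the paper presents it as a \emph{converse} of that Main Theorem, and it is the direction that needs Theorem~\ref{thm:main_thm}. What the earlier paper supplies (and what the paper's proof simply quotes) is the opposite implication: its proof shows that an abstract $k[G]$-isomorphism $H^1_{dR}(X)\cong H^0(X,\Omega_X)\oplus H^1(X,\mathcal O_X)$ already forces $d_P''=0$ for every $P$, so in particular a $G$-equivariant splitting implies weak ramification. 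Consequently the direction you work out in detail via \eqref{eqn:decomposition_of_defect}, $\delta_P\ge 0$ and $\delta_P=0 \iff G_{P,2}=0$ is essentially a re-derivation of the cited result (your ``option (i)'' is sound for that purpose, and is indeed safer than your Krull--Schmidt ``point-by-point'' separation, which is not justified as stated: an isomorphism of the total direct sums does not yield isomorphisms of the individual $Q$-summands, and the $H^\bullet_Q$ are not canonically summands of the cohomology). The direction that actually requires an argument is absent from your proposal.

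To fill the gap one argues as the paper does: if $d_P''=0$ for all $P$, then $H^0_Q=H^1_Q=0$ by Lemma~\ref{lem:properties_H0Q_H1Q}~(2), hence also $H^1_{dR,Q}=0$ by the local exact sequence of Lemma~\ref{lem:properties_IXY_H1dRQ}, and Theorem~\ref{thm:main_thm} gives an abstract isomorphism $H^1_{dR}(X)\cong k[G]^{\oplus 2g_Y}\oplus I_{X/Y}\oplus J_{X/Y}\cong H^0(X,\Omega_X)\oplus H^1(X,\mathcal O_X)$. This abstract isomorphism does not by itself split the given extension; one needs the further module-theoretic fact (Miyata/Guralnick type, cited in the paper as \cite[Theorem 3.5]{Guralnick_Roths_theorems}) that for finite-dimensional $k[G]$-modules an exact sequence $0\to A\to B\to C\to 0$ with $B\cong A\oplus C$ necessarily splits, applied with $A=H^0(X,\Omega_X)$ viewed inside $H^1_{dR}(X)$ via \eqref{eqn:intro_hodge_de_rham_se}. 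This last step is essential and is exactly what your proposal would still have to supply.
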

One could hope that a similar decomposition to the one in Theorem~\ref{thm:main_thm}
holds in a more general context, for example for the de Rham cohomology treated as a 
$k[F, V]$-module (where $F$ and $V$ denote the Frobenius and Verschiebung morphisms).
A result of Elkin and Pries accomplishes this in the case when $Y = \PP^1$ and $p = 2$ (cf.~\cite[Theorem~1.2]{elkin_pries_ekedahl_oort}). However, explicit examples show that such a decomposition is impossible in general, cf. \cite[Example~7.2]{Booher_Cais_a_numbers}.\\

Our results leave some questions open. In which generality do the decompositions~\eqref{eqn:decomposition_Hdg}
and~\eqref{eqn:decomposition_dR} hold? How to describe
the modules $H^0_Q$, $H^1_Q$, $H^1_{dR, Q}$ without using the magical element? 
Is it possible to generalize the above considerations to higher dimensional varieties or to crystalline
cohomology? We plan to investigate those questions in the near future.
\subsection*{Strategy of the proof of Theorem~\ref{thm:main_thm}}
We explain now the idea behind the proof of Theorem~\ref{thm:main_thm} for the module of holomorphic differentials. The proof for $H^1(X, \mc O_X)$ and
$H^1_{dR}(X)$ follows the same strategy. Our approach is divided into two main steps.
In both steps we use the fact that the function $z$ is a normal basis
element of $k(X)/k(Y)$.\\

In the first step we compare the module $H^0(X, \Omega_X)$ with $H^0(X, \Omega_X(R))$, the module of differentials with logarithmic poles in the ramification locus of the cover (cf. Proposition~\ref{prop:log_diffs_and_diffs}). To this end we write any form $\omega \in \Omega_{k(X)}$ as $\sum_{g \in G} g^*(z) \omega_g$ for $\omega_g \in \Omega_{k(Y)}$ and
consider the following question.
\begin{Question}
	Suppose that $\omega \in H^0(X, \Omega_X)$. What are the possible values of the residues $\res_Q(\omega_g)$ for $Q \in B$ and $g \in G$? In other words, what is the image of $H^0(X, \Omega_X)$ under the map:
	\[
	\res_G : \Omega_{k(X)} \to \bigoplus_{Q \in B} k[G], \qquad \omega \mapsto \sum_{Q \in B} \sum_{g \in G} \res_Q(\omega_g) g_Q,
	\]
	where $g_Q \in \bigoplus_{B} k[G]$ is the element with $g$ on the $Q$-th component and $0$ on other components?
\end{Question}
\noindent There are two conditions imposed on the residues of $\omega_g$:
\begin{enumerate}[(1)]
	\item The first one follows from the residue theorem: $\sum_{Q \in B} \res_Q(\omega_g) = 0$.
	
	\item The second one is imposed by the condition $\res_P(\omega) = 0$ for every $P \in \pi^{-1}(B)$.
\end{enumerate}
The conditions (1) and (2) define the module $I_{X/Y}$. In this way we obtain an equivariant homomorphism
\begin{equation} \label{eqn:map_to_IXY}
	\res_G : H^0(X, \Omega_X) \to I_{X/Y}.
\end{equation}
It turns out that this homomorphism is split. One constructs
a section by choosing appropriate forms in $\Omega_{k(Y)}$ with known residues.
We can apply a similar reasoning for logarithmic differential forms, neglecting the condition (2).
In this way one obtains a split homomorphism:
\begin{equation} \label{eqn:map_to_kGB}
	\res_G : H^0(X, \Omega_X(R)) \to k[G]_B,
\end{equation}
where $k[G]_B$ is the submodule of $\bigoplus_B k[G]$ defined by the condition (1). It turns out that the maps~\eqref{eqn:map_to_IXY} and~\eqref{eqn:map_to_kGB}
have the same kernel (cf. Proposition~\ref{prop:log_diffs_and_diffs}).\\

In the second step we observe that we have an inclusion of sheaves of the same rank:
\[
\bigoplus_{g \in G} g^*(z) \Omega_Y(B) \subset \Omega_X(R).
\]
Hence, their quotient $\mc T$
is a torsion sheaf and its global sections decompose as a sum of local parts. By
applying the long exact sequence to the sequence of sheaves:
\[
0 \to \bigoplus_{g \in G} g^*(z) \Omega_Y(B) \to \Omega_X(R) \to \mc T \to 0
\]
and observing that $k[G]$ is an injective $k[G]$-module, we find the $k[G]$-structure on $H^0(X, \Omega_X(R))$ (this is done at the beginning of Section~\ref{sec:OmegaX} on page~\pageref{eqn:structure_Omega_X_R}).
\subsection*{Constructing magical elements}
It is reasonable to ask how often are the conditions~\ref{enum:A} and~\ref{enum:B} satisfied.
Unfortunately, not every $p$-group cover has a magical element, cf. Subsection~\ref{subsec:no_magical_element} for a counterexample.
However, it turns out that a generic $G$-cover satisfies~\ref{enum:A} and~\ref{enum:B}. To be precise,
let $k$ be an algebraically closed field of characteristic~$p$. Fix a finite $p$-group~$G$ and an affine open subset $U$ of a smooth projective curve $Y$ over $k$. Let $M_{U, G}$ denote the moduli space
of pointed $G$-covers of $Y$ unramified over $U$, as defined in~\cite{Harbater_moduli_of_p_covers}.

\begin{Theorem} \label{thm:generic_intro}
	The set of covers satisfying~\ref{enum:A} and~\ref{enum:B} forms a dense subset of~$M_{U, G}$.
\end{Theorem}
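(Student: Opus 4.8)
The plan is to establish Theorem~\ref{thm:generic_intro} by analyzing the two conditions separately over the moduli space $M_{U, G}$ and showing each holds on a dense (in fact open dense, or at least constructible dense) locus.

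First I would treat condition~\ref{enum:A}. Since $G$ is a $p$-group, every nontrivial subgroup has nontrivial intersection with the center, and more usefully, a subgroup $H \le G$ is normal as soon as it is ``small enough'' relative to $G$ or sits in a suitable position. The key observation is that for a generic cover, the ramification is as mild as possible: one expects the inertia groups $G_P$ to be cyclic (even of order $p$) for a generic choice of cover with prescribed branch locus, by a deformation/degeneration argument in the style of Harbater–Pries. So the plan is: first show that the locus in $M_{U, G}$ where all inertia groups are cyclic of order $p$ is dense (this is essentially a local statement about deforming the Artin–Schreier–Witt extensions at each branch point, combined with the fact that Harbater's space $M_{U, G}$ parametrizes all such covers and its geometry is controlled by the local deformation spaces); then observe that over the further locus where we may additionally arrange these order-$p$ inertia groups to be central (e.g. contained in a fixed central subgroup of order $p$, which we can do after possibly enlarging or by genericity of the gluing data), condition~\ref{enum:A} holds automatically. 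Alternatively, if $G$ has the property that \emph{every} subgroup of order $p$ is normal (e.g. $G$ cyclic or generalized quaternion), then~\ref{enum:A} is automatic once inertia groups have order $p$; in the general case one argues that the generic cover can be built so that inertia is central.

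Second, and this is where I expect the main obstacle, I would establish condition~\ref{enum:B}, the existence of a magical element $z$ with $\ord_P(z) \ge -d_P'$ for all $P$ and $\tr_{X/Y}(z) \neq 0$. The strategy is to produce $z$ as a normal basis element: by the normal basis theorem, $k(X)/k(Y)$ admits a normal basis element $w$, i.e. $\{g^*(w) : g \in G\}$ is a $k(Y)$-basis, equivalently $\tr_{X/Y}(w) \neq 0$ after suitable scaling; the problem is the pole condition $\ord_P(z) \ge -d_P'$. One knows $d_P'$ is the exponent of the different minus the tame part, so the condition $\ord_P(z) \ge -d_P'$ is a bound that a \emph{generic} element of $k(X)$ with controlled poles should satisfy, while normal-basis-ness is a Zariski-open condition on the relevant finite-dimensional space of functions. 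So the plan is: fix an effective divisor $D = \sum_P d_P' \cdot P$ on $X$ (or a suitable $G$-stable enlargement), consider the $k$-vector space $V = H^0(X, \mc O_X(D))$, show that for the generic cover this space is large enough that the locus of $z \in V$ which are normal basis elements \emph{and} have $\tr_{X/Y}(z) \neq 0$ is nonempty — indeed Zariski-dense in $V$ — because the complement is a proper closed subvariety cut out by the vanishing of the ``normal basis determinant'' $\det(g^*h^*(z))_{g,h}$ and by $\tr_{X/Y}(z) = 0$; then check that this can be arranged uniformly (or at least on a dense locus) as the cover varies in $M_{U, G}$, using flatness/semicontinuity of $h^0(\mc O_X(D))$ over the moduli space.

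The hard part will be the interplay between the two genericity statements and the uniformity over $M_{U, G}$: one must show that the magical-element locus is not merely dense in each fiber but that its union over the good-inertia locus is dense in $M_{U, G}$ itself, which requires understanding how $H^0(X, \mc O_X(D))$ and the normal-basis determinant behave in families — here I would invoke properness of $M_{U, G}$ or at least finiteness of its irreducible components (from~\cite{Harbater_moduli_of_p_covers}) together with upper-semicontinuity of dimensions of cohomology, and argue that a constructible set meeting a dense open subset of every component is dense. A secondary subtlety is that the counterexample alluded to in Subsection~\ref{subsec:no_magical_element} shows~\ref{enum:B} genuinely fails somewhere, so the argument must be by genericity and cannot hope for an ``always'' statement; concretely this means I expect to degenerate to a maximally-ramified or specially-chosen cover only to \emph{disprove} universality, while the generic direction proceeds by the dimension count above. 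I would also double-check the edge cases $g_Y = 0$ and $\# B$ small, where the space $V$ may be forced to be small and genericity of $z$ needs the branch locus to be reasonably spread out — these are handled by choosing $U$ (equivalently the unramified locus) appropriately, which is allowed since the theorem fixes $U$ but we are free to exploit that $M_{U,G}$ already encodes enough branch points.
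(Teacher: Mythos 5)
There are two genuine gaps, and they are the heart of the matter. First, your treatment of condition~\ref{enum:A} rests on the expectation that a generic cover in $M_{U, G}$ has inertia groups that are cyclic of order $p$ and can be arranged to be central. This is both false and self-defeating. False, because Harbater's space is a direct limit of affine spaces and the generic point of it is \emph{highly} ramified: the loci of bounded conductor are finite-dimensional, so ``mild ramification'' is the opposite of generic; indeed for noncyclic $G$ over $Y = \PP^1$ there may be no connected covers at all with all inertia of order $p$. Self-defeating, because condition~\ref{enum:B} forces $\langle G_Q : Q \in B \rangle = G$ (Lemma~\ref{lem:GME_implies_no_etale_cover}~(2)); if all inertia groups were central of order $p$, this subgroup would be central, hence abelian, and~\ref{enum:B} would be unattainable for any nonabelian $G$. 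The paper goes the other way: it fixes a central $H \cong \ZZ/p$, works by induction on $\# G$ along $\pr : M_{U,G} \to M_{U,G/H}$, and \emph{increases} ramification by twisting with highly ramified $Z \in M_{U,H}$ (Lemma~\ref{lem:mQ_of_XZ}); then $H_P = H$ at every ramified point, so $G_P$ is the preimage of the (inductively normal) inertia group downstairs, which gives~\ref{enum:A} with \emph{large}, not small, inertia.

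Second, your argument for~\ref{enum:B} does not close. With $D = \sum_P d_P' \cdot P$ and $V = H^0(X, \mc O_X(D))$, the pole condition together with~\eqref{eqn:valuation_of_trace} forces $\tr_{X/Y}(z) \in k$ for every $z \in V$, so $\tr_{X/Y}|_V$ is a $k$-linear functional with values in the constants. Whether it is nonzero is a property of the \emph{cover}, not a nonvanishing locus of positive codimension in $V$: the vanishing locus is either a hyperplane or all of $V$, and Subsection~\ref{subsec:no_magical_element} exhibits a cover where it is all of $V$. So ``the complement is a proper closed subvariety'' is exactly the unproved point, and the semicontinuity/``properness of $M_{U,G}$'' appeals do not supply it ($M_{U,G}$ is an ind-affine space, not proper). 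The missing mechanism, which is the actual content of the paper's proof, is: a $\ZZ/p$-cover that is sufficiently ramified at one point admits a global standard form (Lemma~\ref{lem:criterion_for_gsf}), a global standard form produces an explicit magical element $z = y^{p-1}$ with $\tr_{X/Y}(z) = -1$ (Lemma~\ref{lem:gsf_gives_me_for_Zp}), and magical elements multiply along a composition series (Lemma~\ref{lem:new_magical_elts}); combined with the twisting argument above and the two topological lemmas about open maps of direct limits, this yields density. Your proposal would need an independent proof that $\tr_{X/Y}|_V \not\equiv 0$ on a dense set of covers, which is precisely what it lacks.
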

\noindent The proof of Theorem~\ref{thm:generic_intro} gives a way to construct inductively
covers satisfying~\ref{enum:A} and~\ref{enum:B}.  Subsection~\ref{subsec:example_heisenberg} gives an explicit family of $E(p^3)$-covers $\pi : X \to Y$ satisfying
(A) and (B), where $Y$ is an elliptic curve and $E(p^3)$ denotes the Heisenberg group $\textrm{mod } p$.

We discuss now main ideas behind the proof of Theorem~\ref{thm:generic_intro}.
Suppose that $X \to Y$ factors through a Galois cover $X' \to Y$.
It turns out that if both $X \to X'$ and $X' \to Y$ have magical elements, then
$X \to Y$ also has a magical element (cf. Lemma~\ref{lem:new_magical_elts}). Hence everything comes down to
constructing magical elements for $\ZZ/p$-covers.
We prove that a global standard form of a $\ZZ/p$-cover (cf. Subsection~\ref{subsec:gsf} for a definition)
yields a magical element. Moreover, every sufficiently ramified $\ZZ/p$-cover
has a global standard form (cf. Lemma~\ref{lem:criterion_for_gsf}).
Theorem~\ref{thm:generic_intro} follows by noting that a generic $G$-cover can be factored
into sufficiently ramified $\ZZ/p$-covers.
\subsection*{Outline of the paper}
In Section~\ref{sec:notation} we give necessary notation and preliminaries. Section~\ref{sec:magical_elements}
proves some properties of the magical element $z$ from the condition~\ref{enum:B}.
This allows us to prove the part of Theorem~\ref{thm:main_thm} concerning the module of holomorphic differentials
and the cohomology of the structure sheaf in Section~\ref{sec:OmegaX}. In Section~\ref{sec:OX} we study the dual of the map $\res_G$. We prove
the decomposition of the de Rham cohomology from Theorem~\ref{thm:main_thm} in Section~\ref{sec:dR}.
In Section~\ref{sec:AS_covers} we introduce the notion of a global standard form
of a $\ZZ/p$-cover. This allows us to construct a magical element for a large class
of Artin--Schreier covers. Also, we prove Corollary~\ref{cor:cohomology_of_Zp}.
Finally, in Section~\ref{sec:constructing_magical_elements} we prove that a 
generic $p$-group cover has a magical element and give an example of an $E(p^3)$-cover
satisfying~\ref{enum:A} and~\ref{enum:B}.

\subsection*{Acknowledgements}
The author wishes to express his gratitude to Bartosz Naskręcki and Wojciech Gajda,
whose comments helped to considerably improve the exposition of the paper.
The ``global--local'' point of view on this problem was inspired by a conversation with
Piotr Achinger in November 2018. Finally, the author would like to thank the anonymous reviewer, who noted a lot of errors in the original manuscript and suggested a way to significantly simplify the proof of Theorem~\ref{thm:main_thm}. The author was supported by the grant 038/04/N\'{S}/0011, which is a part of the
project "Initiative of Excellence -- Research University" on Adam Mickiewicz University, Poznan.

\section{Preliminaries} \label{sec:notation}
We start this section by introducing notation concerning algebraic curves used throughout the paper.
For an arbitrary smooth projective curve $Y$ over a field $k$ we denote by $k(Y)$ the function field of $Y$.
Also, we write $\ord_Q(f)$ for the order of vanishing of a function $f \in k(Y)$ at a point $Q \in Y(k)$.
Let $\mf m_{Y, Q}^n := \{ f \in k(Y) : \ord_Q(f) \ge n \}$ for any $n \in \ZZ$. To simplify notation, we write
$\Omega_Y$, $\Omega_{k(Y)}$ and $H^1_{dR}(Y)$ instead of $\Omega_{Y}$, $\Omega_{k(Y)/k}$ and $H^1_{dR}(Y/k)$.
We will often identify a finite set $S \subset Y(k)$ with a reduced divisor in $\Divv(Y)$.
Thus e.g. $\Omega_Y(S)$ will denote the sheaf of logarithmic differential
forms with poles in $S$. In the sequel we often use residues of differential forms,
see e.g. \cite[Remark III.7.14]{Hartshorne1977} for relevant facts.\\

Let $G$ be a finite group and $\pi : X \to Y$ be a finite separable $G$-cover of smooth projective curves over a field $k$.
In the sequel we identify $\Omega_{k(Y)}$ with a submodule of $\Omega_{k(X)}$ and
$k(Y)$ with a subfield of $k(X)$. We denote the ramification index of $\pi$ at $P \in X(k)$ by $e_{X/Y, P}$ and
by $G_{P, i}$ the $i$-th ramification group of $\pi$ at $P$, i.e.
\[
G_{P, i} := \{ \sigma \in G : \sigma(f) \equiv f \mod{\mf m_P^{i+1}} \quad \textrm{ for every } f \in \mc O_{X, P} \}.
\]
Also, we use the following notation:
\begin{align*}
	d_{X/Y, P}:= \sum_{i \ge 0} (\# G_{P, i} - 1), \quad
	d_{X/Y, P}' := \sum_{i \ge 1} (\# G_{P, i} - 1), \quad
	d_{X/Y, P}'' := \sum_{i \ge 2} (\# G_{P, i} - 1)
\end{align*}
($d_{X/Y, P}$ is the exponent of the different of $k(X)/k(Y)$ at $P$, cf. \cite[Proposition~IV.\S 1.4]{Serre1979}).
Recall that for any $P \in X(k)$ and $\omega \in \Omega_{k(Y)}$:
\begin{equation} \label{eqn:valuation_of_diff_form}
	\ord_P(\omega) = e_{X/Y, P} \cdot \ord_{\pi(P)}(\omega) + d_{X/Y, P}
\end{equation}
(see e.g. \cite[Proposition IV.2.2~(b)]{Hartshorne1977}). For any sheaf $\mc F$ on $X$ and $Q \in Y(k)$ we abbreviate $(\pi_* \mc F)_Q$ to $\mc F_Q$. We write briefly $\tr_{X/Y}$ for the trace
\[
	\tr_{k(X)/k(Y)} : k(X) \to k(Y).
\]
Note that it induces a map
\[
	\Omega_{k(X)} \cong k(X) \otimes_{k(Y)} \Omega_{k(Y)} \to \Omega_{k(Y)},
\]
which we also denote by $\tr_{X/Y}$.
For a future use we note the following properties of trace:
\begin{itemize}
	\item For $f \in k(X)$ and $Q \in Y(k)$:
	\begin{equation} \label{eqn:valuation_of_trace}
		\tr_{X/Y}(f) \in \mf m_{Y, Q}^{\alpha},
	\end{equation}
	where $\alpha := \min \{ \lfloor (\ord_P(f)+d_{X/Y, P})/e_{X/Y, P} \rfloor : P \in \pi^{-1}(Q) \}$.
	For the proof see \cite[Lemma 1.4 (b)]{Kock_galois_structure}. 
	\item Let $S := \pi^{-1}(Q)$. Then:
	\begin{equation} \label{eqn:trace_and_diff_forms}
		\tr_{X/Y}(\Omega_{X, Q}) \subset \Omega_{Y, Q} \quad \textrm{ and }
		\tr_{X/Y}(\Omega_X(S)_{Q}) \subset \Omega_Y(\{ Q \})_Q.
	\end{equation}
	\item[] Indeed, the first part of~\eqref{eqn:trace_and_diff_forms} follows by the main result of~\cite{Zannier_traces_diff_forms}.
	The second part is immediate by using the first part and noting that for any $f \in k(X)$, $f \neq 0$:
	\[
	\tr_{X/Y}(df/f) = \frac{d(N_{X/Y} f)}{N_{X/Y} f},
	\]
	where $N_{X/Y} : k(X) \to k(Y)$ is the norm of the extension of fields $k(X)/k(Y)$.
	\item For any $\eta \in \Omega_{k(X)}$ and $Q \in Y(k)$:
	\begin{equation} \label{eqn:residue_and_trace}
		\sum_{P \in \pi^{-1}(Q)} \res_P(\eta) = \res_Q(\tr_{X/Y}(\eta))
	\end{equation}
	\item[] (see \cite[Proposition 1.6]{Hubl_residual_representation} or \cite[p.~154, $(R_6)$]{Tate_residues_differentials_curves}).
\end{itemize}
In the most part of the article we will assume that $k$, $X$, $Y$ and $G$ are as in Theorem~\ref{thm:main_thm}.
In this situation we adopt the following notation:
\begin{itemize}
	\item $B \subset Y(k)$ -- the set of branch points of $\pi$,
	
	\item $R \subset X(k)$ -- the set of ramification points of $\pi$,
	
	\item $U := Y \setminus B$, $V := \pi^{-1}(U)$,
	
	\item $e_P := e_{X/Y, P}$, $d_P := d_{X/Y, P}$, $d_P' := d_{X/Y, P}'$, $d_P'' := d_{X/Y, P}''$ for $P \in X(k)$,
	\item $X_P := X/G_{P, 0}$ is the quotient curve. We denote the image of $P$ on $X_P$ by $\ol P$.
	Similarly, for set $S \subset X(k)$, we write $\ol S$ for its image on $X_P$.
\end{itemize}
Also, by abuse of notation, for $Q \in Y(k)$ we write $G_{Q, i} := G_{P, i}$, $e_Q := e_P$, $d_Q := d_P$, $X_Q := X_P$ etc. for any $P \in \pi^{-1}(Q)$. Note that these quantities don't depend on the choice
of $P$. \\

Recall that the map:
\[
k(X) \times G \to k(X), \qquad f \cdot g := g^*(f)
\]
induces a natural right action on $k(X)$, since $g_1^* \circ g_2^* = (g_2 \cdot g_1)^*$ for any $g_1, g_2 \in G$.
Similarly, we have the structure of a right $k[G]$-module on $H^0(X, \Omega_X)$, $H^1(X, \mc O_X)$, $H^1_{dR}(X)$, etc. Note also that $G_P = G_{P, 0} = G_{P, 1}$, since $k$ is algebraically closed of characteristic $p$ and $G$ is a $p$-group (cf.~\cite[Corollary 4.2.3., p. 67]{Serre1979}).

\section{Magical elements} \label{sec:magical_elements}
Keep the assumptions of Theorem~\ref{thm:main_thm}. In this section we study the properties of the magical element $z \in k(X)$
satisfying the condition~\ref{enum:B}.\\
It turns out that the condition $\tr_{X/Y}(z) \neq 0$ guarantees that $z$ is a normal element,
see e.g.~\cite[Theorem 1]{Childs_Orzech_On_modular}. We give a different proof for completeness.
\begin{Proposition} \label{prop:g(z)_is_a_basis}
	The set $\{ g^*(z) : g \in G \}$ is a $k(Y)$-basis of $k(X)$.
\end{Proposition}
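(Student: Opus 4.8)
The plan is to show that $\{g^*(z) : g \in G\}$ is linearly independent over $k(Y)$; since $[k(X):k(Y)] = \#G$, this forces it to be a basis. The key input is the hypothesis $\tr_{X/Y}(z) \neq 0$, together with the fact that $k(X)/k(Y)$ is a Galois extension with group $G$, so that distinct automorphisms in $G$ are $k(Y)$-linearly independent on $k(X)$ by Dedekind's lemma on independence of characters.

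The main step is the following criterion: a normal basis element exists iff the $k(Y)[G]$-module $k(X)$ is free of rank one, and an element $z$ generates it iff the images $g^*(z)$ are independent. Suppose for contradiction that $\sum_{g \in G} a_g \, g^*(z) = 0$ for some $a_g \in k(Y)$, not all zero. I would apply each $h^* $ ($h \in G$) to this relation. Since $h^*$ fixes $k(Y)$ pointwise and $h^*(g^*(z)) = (g\cdot h)^*(z)$, we get $\sum_{g} a_g \, (g h)^*(z) = 0$ for every $h \in G$, i.e. reindexing, $\sum_{g} a_{g h^{-1}} g^*(z) = 0$. So the space of $k(Y)$-relations among the $g^*(z)$ is stable under the regular action of $G$ on the coefficient indices. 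The cleanest way to derive a contradiction is to consider the $k(Y)$-linear map $\Phi : k(Y)[G] \to k(X)$, $\sum a_g g \mapsto \sum a_g g^*(z)$; this is a homomorphism of right $k(Y)[G]$-modules (with $G$ acting on $k(X)$ via $g^*$), and its kernel is a right ideal of $k(Y)[G]$. If the kernel is nonzero, then — because $k(Y)[G]$ is a local ring with residue field $k(Y)$ when $G$ is a $p$-group and $\cha k(Y) = p$ (the augmentation ideal $I_G$ is the unique maximal ideal and is nilpotent modulo... actually $I_G$ is nilpotent only after... let me instead argue directly) — the kernel meets the socle, which is the one-dimensional space spanned by the norm element $N := \sum_{g \in G} g$. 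Hence $N \in \ker \Phi$, i.e. $\sum_{g \in G} g^*(z) = 0$. But $\sum_{g \in G} g^*(z) = \tr_{X/Y}(z) \neq 0$ by hypothesis, a contradiction. Therefore $\ker \Phi = 0$, $\Phi$ is injective, and by dimension count ($\dim_{k(Y)} k(Y)[G] = \#G = [k(X):k(Y)] = \dim_{k(Y)} k(X)$) it is an isomorphism; in particular $\{g^*(z)\}$ is a $k(Y)$-basis.

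I expect the main obstacle to be justifying the socle claim, namely that any nonzero right ideal of $R := k(Y)[G]$ contains the element $N = \sum_{g} g$. This follows from standard modular representation theory of $p$-groups in characteristic $p$: $R$ is a local artinian ring, its unique simple module is the trivial one, and the socle of $R$ as a right module is one-dimensional, spanned by $N$ (equivalently, $N \cdot R = k(Y) \cdot N$ and $I_G \cdot N = 0$, so $N$ generates the unique minimal right ideal, and every nonzero right ideal contains a minimal one). A self-contained way to see $N \in \ker\Phi$ without invoking the socle: take $0 \neq \xi = \sum a_g g \in \ker\Phi$ and let $j$ be maximal with $I_G^j \xi \neq 0$ (finite since $I_G$ is nilpotent as $G$ is a finite $p$-group in characteristic $p$); then $I_G \cdot (I_G^j \xi) = 0$, and any nonzero element of $R$ annihilated by $I_G$ is a $k(Y)$-multiple of $N$ (such an element is $G$-invariant under the regular action, hence constant on the group), so $N \in \ker\Phi$ — giving $\tr_{X/Y}(z) = \Phi(N) = 0$, the desired contradiction. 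The rest is the routine dimension count and the translation between "generates $k(X)$ as $k(Y)[G]$-module" and "forms a $k(Y)$-basis".
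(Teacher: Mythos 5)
Your argument is correct in substance, but it takes a genuinely different route from the paper. The paper proves linear independence via the ``Group Determinant Formula'' $\det[x_{gh}]_{g,h\in G}=\pm(\sum_g x_g)^{\#G}$ (valid for $p$-groups in characteristic $p$, quoted from Formanek--Sibley): applying all $h^*$ to a putative relation gives a linear system whose matrix $[(gh)^*(z)]$ has determinant $\pm\tr_{X/Y}(z)^{\#G}\neq 0$, killing the coefficients. You instead exploit the local structure of $R:=k(Y)[G]$: the map $\Phi:R\to k(X)$, $\sum a_g g\mapsto\sum a_g g^*(z)$, is a homomorphism of right $R$-modules, its kernel is a right ideal, and since $I_G$ is nilpotent and the (right) socle of $R$ is the line spanned by $N=\sum_g g$, a nonzero kernel would contain $N$, contradicting $\Phi(N)=\tr_{X/Y}(z)\neq 0$; injectivity plus $\dim_{k(Y)}k(X)=\#G$ then gives the basis. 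This is essentially the standard proof of the normal-basis criterion of Childs--Orzech, which the paper itself cites as an alternative to its self-contained determinant computation; your version buys a more conceptual, determinant-free argument at the cost of invoking the modular representation theory of $p$-groups (locality of $R$, one-dimensional socle), whereas the paper's identity-based proof is elementary once the group determinant formula is granted. One small repair: in your ``self-contained'' variant you pass to $I_G^j\xi$, i.e.\ you multiply on the \emph{left}, but $\ker\Phi$ is only known to be a \emph{right} ideal, so it is not clear that $I_G^j\xi\in\ker\Phi$; you should take $j$ maximal with $\xi I_G^j\neq 0$ and argue with $\xi I_G^j$ instead (an element killed by right multiplication by $I_G$ is again a $k(Y)$-multiple of $N$), after which the contradiction $\tr_{X/Y}(z)=0$ goes through verbatim. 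Also note that the appeal to Dedekind's independence of characters at the start plays no role in your actual argument and can be dropped.
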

\begin{proof}
	The proof is based on the following identity in the ring $\FF_p[x_g : g \in G]$ ("Group Determinant Formula"):
	\begin{equation} \label{eqn:G-determinant}
		\det [x_{gh}]_{g, h \in G} = \pm \left(\sum_{g \in G} x_g \right)^{\# G}.
	\end{equation}
	This formula follows from \cite[Theorem~7]{Formanek_Sibley_gp_det}.
	Indeed, in the notation of \cite{Formanek_Sibley_gp_det}, if $G$ is a $p$-group, one has $G = \mc O_p(G)$, $y_{\ol g} = \sum_{g \in G} x_g$ and
	$\mc D_G(x_g) = \det[x_{g h^{-1}}]_{g, h \in G} = \pm \det [x_{gh'}]_{g, h' \in G}$ (where $h' = h^{-1}$ and $\pm$ is the sign of the permutation $h \mapsto h^{-1}$ of~$G$).
	See also~\cite[Lemma 2.2 and Corollary 2.4]{Huynh_Artin_Schreier_extensions}
	and~\cite[Lemma~5.26(a)]{Washington_Intro_to_cyclotomic} for versions of this formula for abelian groups.\\

	We show that the set $\{ g^*(z) : g \in G \}$ is linearly independent over $k(Y)$.
	Suppose that for some $f_g \in k(Y)$:
	\begin{equation*}
		0 = \sum_{g \in G} g^*(z) \cdot f_g.
	\end{equation*}
	Then for any $h \in G$:
	\begin{equation*}
		0 = \sum_{g \in G} h^*(g^*(z)) \cdot f_g = \sum_{g \in G} (gh)^*(z) \cdot f_g.
	\end{equation*}
	However, by~\eqref{eqn:G-determinant}:
	\[
	\det[(gh)^*(z)] = \pm \tr_{X/Y}(z)^{\# G} \neq 0
	\]
	and hence $f_g = 0$ for all $g \in G$. This ends the proof.
\end{proof}
Note that $\Omega_{k(X)}$ is a rank one $k(X)$-module and similarly $\Omega_{k(Y)}$ is a rank one $k(Y)$-module.
Hence, in the light of Proposition~\ref{prop:g(z)_is_a_basis}, for any $\omega \in \Omega_{k(X)}$ there exists
a unique system of differential forms $(\omega_g)_{g \in G}$ in $\Omega_{k(Y)}$ for which
\[
\omega = \sum_{g \in G} g^*(z) \omega_g.
\]
Note that for any $g, h \in G$:
\begin{equation} \label{eqn:g_component_of_h}
	h^*(\omega)_g = \omega_{g h^{-1}}.
\end{equation}
Indeed:
\begin{align*}
	h^*(\omega) = \sum_{g \in G} h^*(g^*(z)) \omega_g = \sum_{g \in G} (g \cdot h)^*(z) \omega_g
	= \sum_{g \in G} g^*(z) \omega_{gh^{-1}}.
\end{align*}
For any $g \in G$ we define the sheaf $g^*(z) \Omega_Y$ as the image of the subsheaf $\Omega_Y$ of the constant sheaf $\pi_* \Omega_{k(X)}$ under multiplication
by $g^*(z) \in k(X)$. We use similar convention for $\Omega_Y(B)$.
\begin{Lemma} \label{lem:inclusions_of_modules}	
	We have the following inclusions of sheaves on $Y$:
	\begin{align*}
		\bigoplus_{g \in G} g^*(z) \Omega_Y &\subset \pi_* \Omega_X, \\
		\bigoplus_{g \in G} g^*(z) \Omega_Y(B) &\subset \pi_* \Omega_X(R).
	\end{align*}
\end{Lemma}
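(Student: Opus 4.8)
The plan is to verify both inclusions locally, point by point on $Y$, and to reduce everything to the valuation formula~\eqref{eqn:valuation_of_diff_form} together with the defining property $\ord_P(z) \ge -d_P'$ of the magical element. Fix a point $Q \in Y(k)$ and a differential form $\omega \in \Omega_{k(Y)}$; I must show that $g^*(z)\,\omega$ lies in $(\pi_*\Omega_X)_Q = \Omega_{X,Q}$ whenever $\omega \in \Omega_{Y,Q}$, and similarly that $g^*(z)\,\omega \in \Omega_X(R)_Q$ whenever $\omega \in \Omega_Y(B)_Q$. Since $\Omega_{X,Q} = \bigcap_{P \in \pi^{-1}(Q)} \{\eta \in \Omega_{k(X)} : \ord_P(\eta) \ge 0\}$ (and similarly $\Omega_X(R)_Q$ is cut out by $\ord_P(\eta) \ge -1$ at the ramified $P$ and $\ord_P(\eta)\ge 0$ at the unramified ones), it suffices to estimate $\ord_P(g^*(z)\,\omega)$ for each $P \in \pi^{-1}(Q)$.

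First I would note that $\ord_P(g^*(z)) = \ord_{g^{-1}(P)}(z) \ge -d_{g^{-1}(P)}'$, and that $d_{g^{-1}(P)}' = d_P'$ since the ramification filtration is conjugation-invariant (the groups $G_{g^{-1}(P),i}$ and $G_{P,i}$ are conjugate); so in fact $\ord_P(g^*(z)) \ge -d_P'$ for every $g$. Next, for $\omega \in \Omega_{Y,Q}$ one has $\ord_Q(\omega) \ge 0$, hence by~\eqref{eqn:valuation_of_diff_form}
\[
\ord_P(g^*(z)\,\omega) = \ord_P(g^*(z)) + e_P \cdot \ord_Q(\omega) + d_P \ge -d_P' + 0 + d_P = d_P - d_P'.
\]
Now $d_P - d_P' = \#G_{P,0} - 1 \ge 0$ (it is the $i=0$ term in the defining sum for $d_P$), so $\ord_P(g^*(z)\,\omega) \ge 0$, which gives the first inclusion. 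For the second, take $\omega \in \Omega_Y(B)_Q$, so $\ord_Q(\omega) \ge -1$. If $P$ is unramified over $Q$ then $e_P = 1$, $d_P = d_P' = 0$, $g^*(z)$ is a unit at $P$, and $\ord_P(g^*(z)\,\omega) = \ord_Q(\omega) \ge -1$; but $Q \notin B$ in that case so actually $\ord_Q(\omega)\ge 0$, and there is nothing ramified at $P$ to worry about. If $P$ is ramified, then $e_P \ge 2$ and the same computation as above gives
\[
\ord_P(g^*(z)\,\omega) \ge -d_P' + e_P \cdot (-1) + d_P = (d_P - d_P') - e_P = \#G_{P,0} - 1 - e_P.
\]
Since $G$ is a $p$-group and $k$ is algebraically closed of characteristic $p$, the ramification is totally wild, so $e_P = \#G_{P,0}$, whence $\ord_P(g^*(z)\,\omega) \ge -1$, as required for membership in $\Omega_X(R)_Q$.

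The main thing to be careful about — the only real ``obstacle'' — is the bookkeeping that the bound $\ord_P(z) \ge -d_P'$ transports correctly under the Galois action to give $\ord_P(g^*(z)) \ge -d_P'$ for all $g$ simultaneously; this is exactly the conjugation-invariance of the ramification filtration, and it is why the global hypothesis~\ref{enum:A} is not even needed here (though it is convenient notationally). Everything else is a direct application of~\eqref{eqn:valuation_of_diff_form} and the elementary identities $d_P - d_P' = \#G_{P,0} - 1$ and $e_P = \#G_{P,0}$. Finally, since the asserted containments of subsheaves of the constant sheaf $\pi_*\Omega_{k(X)}$ hold stalk-by-stalk at every $Q \in Y(k)$, and agree with the obvious containment over the generic point, they hold as inclusions of sheaves on $Y$.
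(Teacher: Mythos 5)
Your proof is correct and follows essentially the same route as the paper: a stalkwise check using the valuation formula~\eqref{eqn:valuation_of_diff_form}, the bound $\ord_P(z)\ge -d_P'$, and the identities $d_P-d_P'=\#G_{P,0}-1=e_P-1$. The extra remarks (conjugation-invariance of the ramification filtration, the unramified case for the second inclusion) are just the details the paper leaves implicit, so there is nothing to add.
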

\begin{proof}
	Note that by~\eqref{eqn:valuation_of_diff_form} for any $Q \in Y(k)$, $\omega \in \Omega_{Y, Q}$, $g \in G$ and $P \in \pi^{-1}(Q)$:
	\begin{equation*}
		\ord_P(g^*(z) \omega) \ge -d_P' + e_P \cdot \ord_Q(\omega) + d_P \ge -d_P' + d_P = e_P - 1 \ge 0.
	\end{equation*}
	The first inclusion follows. The second inclusion may be proven analogously.
\end{proof}
Observe that $\pi_* \Omega_X(R)$ and $\bigoplus_{g \in G} g^*(z) \Omega_Y(B)$ are coherent sheaves
of rank $\# G$. Therefore, their quotient is torsion and thus is isomorphic to:
\begin{equation} \label{eqn:quotient_differentials}
	\pi_* \Omega_X(R)/\bigoplus_{g \in G} g^*(z) \Omega_Y(B) \cong
	\bigoplus_{Q \in Y(k)} i_{Q, *}(H^0_Q) 
\end{equation}
where $i_Q : \Spec \mc O_{Y, Q} \to Y$ and:
\begin{equation} \label{eqn:H0Q}
	H^0_Q := \Omega_X(R)_Q/\bigoplus_{g \in G} g^*(z) \Omega_Y(B)_Q.
\end{equation}
\begin{Lemma} \label{lem:zQ_regular}
	$z_Q := \tr_{X/X_Q}(z)  \in \mc O_{X_Q, Q}$.
\end{Lemma}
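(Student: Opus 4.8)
The claim is that $z_Q := \tr_{X/X_Q}(z)$ lies in the local ring $\mc O_{X_Q, Q}$, i.e. has no pole at the point $\ol{P}$ of $X_Q = X/G_{P,0}$ lying over $Q$ (for any $P \in \pi^{-1}(Q)$). The natural approach is to apply the valuation estimate for the trace, namely~\eqref{eqn:valuation_of_trace}, to the \emph{intermediate} extension $k(X)/k(X_Q)$ rather than to $k(X)/k(Y)$. The point is that $X \to X_Q$ is totally ramified over $\ol{P}$ with group $G_{P,0} = G_P$, so all the relevant local invariants are governed by the higher ramification groups of this smaller extension, which coincide with $G_{P,i}$ for $i \ge 0$ by the standard behaviour of ramification in a tower where the top piece is totally ramified.

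First I would fix $P \in \pi^{-1}(Q)$ and let $\ol{P} \in X_Q(k)$ be its image; note that $X \to X_Q$ is Galois with group $G_P = G_{P,0}$, totally ramified at $P$, and that the ramification groups of $X/X_Q$ at $P$ are exactly $G_{P,i}$ for all $i \ge 0$. Hence $e_{X/X_Q, P} = \# G_{P,0}$ and $d_{X/X_Q, P} = \sum_{i \ge 0}(\# G_{P,i} - 1) = d_P$. Now apply~\eqref{eqn:valuation_of_trace} to $f = z$ and the extension $k(X)/k(X_Q)$ at the point $\ol{P}$: since $P$ is the unique point over $\ol{P}$, we get $\tr_{X/X_Q}(z) \in \mf m_{X_Q, \ol{P}}^{\alpha}$ with
\[
\alpha = \left\lfloor \frac{\ord_P(z) + d_{X/X_Q, P}}{e_{X/X_Q, P}} \right\rfloor = \left\lfloor \frac{\ord_P(z) + d_P}{\# G_{P,0}} \right\rfloor.
\]
By the magical-element hypothesis~\ref{enum:B}, $\ord_P(z) \ge -d_P'$, so $\ord_P(z) + d_P \ge d_P - d_P' = \# G_{P,0} - 1 \ge 0$ (using $d_P - d_P' = \# G_{P,0} - 1$ since $G_{P,0} = G_{P,1}$ and these are the only groups contributing to the difference). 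Therefore $\alpha \ge 0$, which says precisely that $z_Q$ has non-negative valuation at $\ol{P}$, i.e. $z_Q \in \mc O_{X_Q, \ol{P}}$.

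Finally I would check there is no hidden global issue: $z_Q \in k(X_Q)$ is a rational function, and the statement $z_Q \in \mc O_{X_Q, Q}$ only concerns the stalk at $Q$, i.e. regularity at the (unique, by assumption~\ref{enum:A}) point $\ol{P}$ over $Q$; the argument above handles exactly that. I expect the only mild obstacle to be the bookkeeping of ramification invariants for the intermediate extension — confirming $d_{X/X_Q, P} = d_P$ and $e_{X/X_Q, P} = \# G_{P,0}$, and the identity $d_P - d_P' = \# G_{P,0} - 1$ — all of which follow from the observation recorded in the preliminaries that $G_{P,0} = G_{P,1}$ for a $p$-group in characteristic $p$, together with the fact that ramification groups of a totally ramified top piece of a tower are unchanged. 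No estimate beyond~\eqref{eqn:valuation_of_trace} and the defining inequality of the magical element is needed.
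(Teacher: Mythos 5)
Your proposal is correct and is essentially the paper's own argument: both apply the trace--valuation estimate~\eqref{eqn:valuation_of_trace} to the intermediate cover $X \to X_Q$, identify $e_{X/X_Q,P}=e_P$ and $d_{X/X_Q,P}=d_P$, and conclude from $\ord_P(z)\ge -d_P'$ and $d_P-d_P'=e_P-1$ that the floor is $\ge 0$; the paper merely justifies the transfer of invariants by noting that $X_Q\to Y$ is unramified over $Q$, while you do it via the ramification groups of the subgroup $G_{P,0}$ --- the same bookkeeping. (Only your closing remark that $\ol P$ is the \emph{unique} point of $X_Q$ over $Q$ is inaccurate --- there are $[G:G_Q]$ such points --- but since $P$ ranges over all of $\pi^{-1}(Q)$ your argument covers them all, exactly as in the paper.)
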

\begin{proof}
	Fix a point $P \in \pi^{-1}(Q)$. Observe that $X_Q \to Y$ is unramified over~$Q$. Hence by~\eqref{eqn:valuation_of_trace}:
	\[
		\tr_{X/X_Q}(z) \in \mf m_{X_Q, \ol P}^{\lfloor (-d_P' + d_P)/e_P \rfloor} = \mc O_{X_Q, \ol P}.
	\]
	This finishes the proof.
\end{proof}
Recall that the dual basis of $\{ g^*(z) : g \in G \}$ with respect to the trace map is of
the form $\{ g^*(z^{\vee}) : g \in G \}$ for some $z^{\vee} \in k(X)$ (cf.~\cite[Theorem~3.13.19]{Hachenberger_Jungnickel_Topics}). By definition, $z^{\vee}$ satisfies
for any $g_1, g_2 \in G$:
\begin{equation} \label{eqn:def_of_dual_elt}
	\tr_{X/Y}(g_1(z) \cdot g_2(z^{\vee})) = 
	\begin{cases}
		1, & g_1 = g_2,\\
		0, & g_1 \neq g_2.
	\end{cases}
\end{equation}
For a future use note also that
\begin{equation} \label{eqn:trace_of_dual_z}
	\tr_{X/Y}(z), \, \tr_{X/Y}(z^{\vee}) \in k^{\times}.
\end{equation}
Indeed, by~\eqref{eqn:valuation_of_trace} for every $Q \in Y(k)$ one has $\tr_{X/Y}(z) \in \mc O_{Y, Q}$.
This yields $\tr_{X/Y}(z) \in \bigcap_{Q \in Y(k)} \mc O_{Y, Q} = H^0(Y, \mc O_Y) = k$ and 
$\tr_{X/Y}(z) \in k^{\times}$. Moreover by~\eqref{eqn:def_of_dual_elt}:
\begin{align*}
	\tr_{X/Y}(z^{\vee}) 
	&= \frac{\tr_{X/Y} \left(z^{\vee} \cdot \tr_{X/Y}(z) \right)}{\tr_{X/Y}(z)}
	= \frac{\tr_{X/Y} \left(z^{\vee} \cdot \sum_{g \in G} g^*(z) \right)}{\tr_{X/Y}(z)}\\
	&= \frac{\sum_{g \in G} \tr_{X/Y}(z^{\vee} \cdot g^*(z))}{\tr_{X/Y}(z)} = \frac{1}{\tr_{X/Y}(z)} \in k^{\times}.
\end{align*}	
For any $f \in k(X)$, we will denote by $(f_g)_{g \in G}$ the unique system of functions $f_g \in k(Y)$ such that:
\[
f = \sum_{g \in G} g^*(z^{\vee}) f_g.
\]
Note that by~\eqref{eqn:def_of_dual_elt} for any $g \in G$, $\omega \in \Omega_{k(X)}$ and $f \in k(X)$:
\begin{align}
	\tr_{X/Y}(g^*(z^{\vee}) \cdot \omega) = \omega_g \quad \textrm{ and } \quad
	\tr_{X/Y}(g^*(z) \cdot f) = f_g. \label{eqn:gth_component_trace_f}
\end{align}
For any $g \in G$ define the sheaves $g^*(z^{\vee}) \mc O_Y$ and $g^*(z^{\vee}) \mc O_Y(-B)$ similarly as  $g^*(z) \Omega_Y$ and
 $g^*(z) \Omega_Y(B)$.
\begin{Lemma} \label{lem:inclusions_of_modules2}
	We have the following inclusions:
	\begin{align*}
		\pi_* \mc O_X &\subset \bigoplus_{g \in G} g^*(z^{\vee}) \mc O_Y,\\
		\pi_* \mc O_X(-R) &\subset \bigoplus_{g \in G} g^*(z^{\vee}) \mc O_Y(-B).
	\end{align*}
\end{Lemma}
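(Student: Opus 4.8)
The plan is to prove this as the exact $\mc O_X$-analogue of Lemma~\ref{lem:inclusions_of_modules}, transporting that argument to the trace-dual basis $\{g^*(z^{\vee}) : g \in G\}$. Fix $Q \in Y(k)$. Since $\{g^*(z^{\vee}) : g \in G\}$ is a $k(Y)$-basis of $k(X)$ (apply Proposition~\ref{prop:g(z)_is_a_basis} to $z^{\vee}$, using $\tr_{X/Y}(z^{\vee}) \in k^{\times}$ from~\eqref{eqn:trace_of_dual_z}), a function $f \in k(X)$ lies in $\left(\bigoplus_{g \in G} g^*(z^{\vee}) \mc O_Y\right)_Q$ exactly when each component $f_g$ lies in $\mc O_{Y, Q}$, and likewise $f$ lies in $\left(\bigoplus_{g \in G} g^*(z^{\vee}) \mc O_Y(-B)\right)_Q$ exactly when each $f_g$ lies in $\mc O_Y(-B)_Q$. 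By~\eqref{eqn:gth_component_trace_f} one has $f_g = \tr_{X/Y}(g^*(z) \cdot f)$, so the whole statement reduces to estimating $\ord_Q\!\left(\tr_{X/Y}(g^*(z) \cdot f)\right)$ by means of the trace-valuation inequality~\eqref{eqn:valuation_of_trace}.

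For the first inclusion, take $f \in \mc O_{X, Q}$ and $g \in G$. For each $P \in \pi^{-1}(Q)$ we have $\ord_P(g^*(z)) = \ord_{g(P)}(z) \ge -d'_{g(P)} = -d'_P$, using condition~\ref{enum:B} and the fact (noted just after the notation of Section~\ref{sec:notation}) that $d'_P$ depends only on $\pi(P)$; hence $\ord_P(g^*(z) \cdot f) \ge -d'_P$. Substituting into~\eqref{eqn:valuation_of_trace} and using $d_P - d'_P = \#G_{P, 0} - 1 = e_P - 1$, I get $\left\lfloor (\ord_P(g^*(z) f) + d_P)/e_P \right\rfloor \ge \lfloor (e_P - 1)/e_P \rfloor = 0$ for every $P \in \pi^{-1}(Q)$, so $f_g \in \mf m_{Y, Q}^0 = \mc O_{Y, Q}$. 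This proves $\pi_* \mc O_X \subset \bigoplus_{g \in G} g^*(z^{\vee}) \mc O_Y$.

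For the second inclusion the computation is identical once one observes that $R = \pi^{-1}(B)$: the ramification index is constant along the fibres of the Galois cover $\pi$, so if $Q \in B$ then every point of $\pi^{-1}(Q)$ is ramified, while if $Q \notin B$ then $f \in \mc O_X(-R)_Q$ just means $f \in \mc O_{X,Q}$ and the first inclusion already applies at $Q$. For $Q \in B$ and $f \in \mc O_X(-R)_Q$ we now have $\ord_P(g^*(z) \cdot f) \ge -d'_P + 1$ for all $P \in \pi^{-1}(Q)$, so~\eqref{eqn:valuation_of_trace} gives $\left\lfloor (\ord_P(g^*(z) f) + d_P)/e_P \right\rfloor \ge \lfloor e_P/e_P \rfloor = 1$, whence $f_g \in \mf m_{Y, Q}^1 = \mc O_Y(-B)_Q$, as required.

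I expect no genuine obstacle here, as the proof mirrors Lemma~\ref{lem:inclusions_of_modules} verbatim. The only points deserving a line of care are the identification of the stalks $\left(\bigoplus_{g} g^*(z^{\vee})\mc O_Y\right)_Q$ and $\left(\bigoplus_{g} g^*(z^{\vee})\mc O_Y(-B)\right)_Q$ with the componentwise conditions on the $f_g$ (which rests on $\{g^*(z^{\vee})\}$ being a $k(Y)$-basis), and, for the second inclusion, the bookkeeping $R = \pi^{-1}(B)$ that records which points over $Q$ are ramified.
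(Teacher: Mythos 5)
Your proof is correct, but it runs along a different track than the paper's. You prove the lemma by a direct valuation estimate: writing $f_g = \tr_{X/Y}(g^*(z)\cdot f)$ via~\eqref{eqn:gth_component_trace_f} and feeding the bound $\ord_P(g^*(z)f) \ge -d_P'$ (resp. $\ge -d_P'+1$) into K\"ock's trace--valuation inequality~\eqref{eqn:valuation_of_trace}, together with $d_P - d_P' = e_P - 1$; this is literally the argument of Lemma~\ref{lem:inclusions_of_modules} transported to functions, and your side remarks (stalks of the direct sum reduce to componentwise conditions on the $f_g$ since $\{g^*(z^{\vee})\}$ is a $k(Y)$-basis; $R = \pi^{-1}(B)$ because ramification indices are constant along fibres of the Galois cover, so $\mc O_Y(-B)_Q = \mf m_{Y,Q}$ for $Q \in B$) are exactly the points that need saying. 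The paper instead deduces the lemma from Lemma~\ref{lem:inclusions_of_modules} by duality: for $f \in \mc O_{X,Q}$ and any $\omega \in \Omega_{Y,Q}$ one has $\res_Q(f_g\,\omega) = \sum_{P \in \pi^{-1}(Q)} \res_P(f\, g^*(z)\,\omega) = 0$ by~\eqref{eqn:residue_and_trace} and the fact that $g^*(z)\omega$ is regular, and then the non-degeneracy of the local residue pairing~\eqref{eqn:local_duality} forces $f_g \in \mc O_{Y,Q}$. Your route is self-contained modulo~\eqref{eqn:valuation_of_trace} and handles both inclusions by one uniform computation at the price of some floor-function bookkeeping; the paper's route avoids any valuation arithmetic, exhibits the function-side inclusion as the formal dual of the form-side one, and thereby foreshadows the duality between $H^0_Q$ and $H^1_Q$ used in Lemma~\ref{lem:properties_H0Q_H1Q}~(1). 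Either argument is acceptable.
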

\noindent Before the proof, note that for any $Q \in Y(k)$ the pairing:
\begin{equation} \label{eqn:local_duality}
	(\cdot, \cdot) : k(Y)/\mc O_{Y, Q} \times \Omega_{Y, Q} \to k, \qquad (f, \omega) \mapsto \res_Q(f \cdot \omega)
\end{equation}
is non-degenerate. Indeed, if $\omega \in \Omega_{Y, Q}$, $\omega \neq 0$
then $(t^{-n-1}, \omega) \neq 0$ where $n := \ord_Q(\omega)$ and $t \in k(Y)$ is a uniformizer in $Q$. Similarly, if $f \in k(Y)$, $\ord_Q(f) = n < 0$ then
$(f, t^{-n - 1} \, dt) \neq 0$.
\begin{proof}[Proof of Lemma~\ref{lem:inclusions_of_modules2}]
	Fix a point $Q \in X(k)$. Suppose that $f \in \mc O_{X, Q}$. Then, for any $\omega \in \Omega_{Y, Q}$, using~\eqref{eqn:gth_component_trace_f} and~\eqref{eqn:residue_and_trace}:
	\begin{align*}
		\res_Q(f_g \cdot \omega) &= \res_Q(\tr_{X/Y}(g^*(z) f) \cdot \omega)\\
		&= \sum_{P \in \pi^{-1}(Q)} \res_P(f  \cdot  g^*(z) \cdot \omega)
		= 0,
	\end{align*}
	where the last equality follows, since $f \in \mc O_{X, Q}$ and $g^*(z) \cdot \omega \in \Omega_{X, Q}$ by Lemma~\ref{lem:inclusions_of_modules}. Hence $f_g \in \mc O_{Y, Q}$.
	The second inclusion follows analogously.
\end{proof}
Lemma~\ref{lem:inclusions_of_modules2} implies that:
\begin{equation*} \label{eqn:quotient_functions}
	\frac{\bigoplus_{g \in G} g^*(z^{\vee}) \mc O_Y(-B)}{\pi_* \mc O_X(-R)} \cong \bigoplus_{Q \in Y(k)} i_{Q, *}(H^1_Q),
\end{equation*}
where:
\begin{equation} \label{eqn:H1Q}
	H^1_Q := \bigoplus_{g \in G} g^*(z^{\vee}) \mc O_Y(-B)_Q/\mc O_X(-R)_Q. 
\end{equation}
\begin{Lemma} \label{lem:properties_H0Q_H1Q}
	Let $H^0_Q$, $H^1_Q$ be defined by~\eqref{eqn:H0Q} and~\eqref{eqn:H1Q}.
	\begin{enumerate}[(1)]
		\item $H^1_Q$ is dual to $H^0_Q$ as a $k[G]$-module.
		
		\item If $d_Q'' = 0$ then $H^0_Q = H^1_Q = 0$.
		
		\item $\sum_{Q \in B} \dim_k H^0_Q = \sum_{Q \in B} \dim_k H^1_Q = \sum_{Q \in B} \frac{1}{2} d_Q'' \cdot \#\pi^{-1}(Q)$.
	\end{enumerate}
\end{Lemma}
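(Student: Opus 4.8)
The plan is to prove the three assertions in order, as (2) and (3) will both lean on (1). For part~(1), the natural approach is to exhibit $H^0_Q$ and $H^1_Q$ as the two sides of a perfect residue pairing. Recall from Lemma~\ref{lem:inclusions_of_modules} and Lemma~\ref{lem:inclusions_of_modules2} that $\bigoplus_g g^*(z)\Omega_Y(B)_Q \subseteq \Omega_X(R)_Q$ and $\mc O_X(-R)_Q \subseteq \bigoplus_g g^*(z^\vee)\mc O_Y(-B)_Q$. I would use the local residue pairing $\res_Q(f\cdot\omega)$ between $k(X)/\mc O_{X,Q}$-type quotients and differentials, pulled through the trace via~\eqref{eqn:residue_and_trace}: concretely, the pairing $(\omega, f)\mapsto \sum_{P\in\pi^{-1}(Q)}\res_P(\omega\cdot f)$ on $\Omega_X(R)_Q \times \mc O_X(-R)_Q$ — wait, this vanishes identically, so instead one pairs $\Omega_X(R)_Q$ against $\mc O_X(-R)_Q$ modulo the respective sublattices. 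The key computation is that $g^*(z)\Omega_Y(B)_Q$ and $g^*(z^\vee)\mc O_Y(-B)_Q$ are exact annihilators of one another under the trace-twisted residue pairing, which reduces to the non-degeneracy of~\eqref{eqn:local_duality} on $Y$ together with the duality defining $z^\vee$ in~\eqref{eqn:def_of_dual_elt}. Since $X_X(R)_Q$ and $\mc O_X(-R)_Q$ are themselves exact duals under $\sum_P\res_P$ (Serre duality localized at $Q$), the induced pairing on the quotients $H^0_Q\times H^1_Q\to k$ is perfect, and it is $G$-equivariant because the action permutes the points of $\pi^{-1}(Q)$ compatibly and fixes the trace form. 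This gives $H^1_Q\cong (H^0_Q)^\vee$ as $k[G]$-modules.

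For part~(2): if $d_Q''=0$ then $\# G_{Q,i}=1$ for $i\ge 2$, so $d_Q'=\# G_{Q,1}-1=\# G_Q-1=e_Q-1$ and $d_Q=\# G_{Q,0}+\# G_{Q,1}-2=2(e_Q-1)$. I would check directly that the inclusion $\bigoplus_g g^*(z)\Omega_Y(B)_Q\subseteq\Omega_X(R)_Q$ is an equality of $\mc O_{Y,Q}$-lattices by a valuation count: for $\omega\in\Omega_{Y,Q}$ with a pole of order exactly $1$ at $Q$ (logarithmic), $\ord_P(g^*(z)\omega)\ge -d_Q' + e_Q\cdot(-1) + d_Q = -d_Q' - e_Q + d_Q$, and plugging in $d_Q-d_Q' = e_Q-1$ gives exactly $-1$, matching the bound defining $\Omega_X(R)_Q$. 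Comparing lengths (both lattices sitting inside $\Omega_{k(X)}$ have colength computable from the different and from $\# B\cdot\# G$), the quotient has length $0$. Then $H^0_Q=0$, and $H^1_Q=0$ by part~(1) — alternatively one repeats the count for the function-side inclusion.

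For part~(3): the length of the torsion sheaf $\mc T = \pi_*\Omega_X(R)/\bigoplus_g g^*(z)\Omega_Y(B)$ is $\sum_{Q}\dim_k H^0_Q$, supported on $B$ by part~(2). I would compute this global length via degrees: $\deg(\pi_*\Omega_X(R)) - \deg\bigl(\bigoplus_g g^*(z)\Omega_Y(B)\bigr)$. The first is $\deg\Omega_X(R) + (1-\# G)(1-g_Y)$ by Riemann--Roch for $\pi_*$ (or by $\chi$), with $\deg\Omega_X(R) = 2g_X-2 + \#R$; the second is $\#G\cdot\deg(\Omega_Y(B)) = \#G\,(2g_Y-2+\#B)$ after noting each summand $g^*(z)\Omega_Y(B)$ is a line bundle of the same degree as $\Omega_Y(B)$ (twisting by a rational function does not change degree). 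Applying the Riemann--Hurwitz formula $2g_X-2 = \#G(2g_Y-2)+\sum_P d_P$ and $\#R=\sum_{Q\in B}\#\pi^{-1}(Q)$, the $g_X$, $g_Y$, and $\#B$ terms cancel and one is left with $\tfrac12\sum_{P}(d_P - (e_P-1) - (e_P-1))$ per... more precisely with $\tfrac12\sum_{Q\in B}\#\pi^{-1}(Q)\cdot(d_Q - 2(e_Q-1)) = \tfrac12\sum_{Q\in B}\#\pi^{-1}(Q)\cdot d_Q''$, using $d_Q = \#G_{Q,0}-1 + d_Q' $ and $d_Q' = \#G_{Q,1}-1 + d_Q'' = (e_Q-1)+d_Q''$. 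The equality $\sum_Q\dim H^1_Q = \sum_Q\dim H^0_Q$ is then immediate from part~(1).

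The main obstacle I anticipate is part~(1): getting the local residue duality exactly right, in particular verifying that the trace-twisted pairing makes $g^*(z)\Omega_Y(B)_Q$ and $g^*(z^\vee)\mc O_Y(-B)_Q$ honest mutual annihilators (not merely orthogonal) and that this is $G$-equivariant for the \emph{right} actions on both modules. The degree bookkeeping in part~(3) is routine but error-prone, and one must be careful that the factor $\tfrac12$ genuinely appears — it comes from the fact that the colength counts each ramified point's contribution $d_Q''$ once while $\#\pi^{-1}(Q)\cdot e_Q = \#G$, so the per-$Q$ term is $\tfrac12\#\pi^{-1}(Q)\,d_Q''$ only after the Riemann--Hurwitz cancellation is carried out symmetrically.
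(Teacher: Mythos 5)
Your parts (1) and (3) are essentially the paper's own argument: for (1) the paper states the pairing $(\omega,f)\mapsto\sum_{g}\res_Q(\omega_g\cdot f_g)$, which by \eqref{eqn:def_of_dual_elt} is exactly your trace--residue pairing $\sum_{P\in\pi^{-1}(Q)}\res_P(f\omega)=\res_Q(\tr_{X/Y}(f\omega))$, and your annihilator computation is the content the paper omits; for (3) the paper likewise takes global sections of \eqref{eqn:quotient_differentials} and applies Riemann--Roch and Riemann--Hurwitz (your final bookkeeping is right, though the intermediate ``$\deg(\pi_*\Omega_X(R))=\deg\Omega_X(R)+(1-\#G)(1-g_Y)$'' should read $\deg\Omega_X(R)+(1-g_X)-\#G(1-g_Y)$).

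Part (2), however, has a genuine gap as written. Your valuation estimate $\ord_P(g^*(z)\omega)\ge -1$ only re-proves the inclusion of Lemma~\ref{lem:inclusions_of_modules}; it says nothing about equality. The appeal to ``comparing lengths, both colengths computable from the different and from $\#B\cdot\#G$'' is not an argument: the colength of $\bigoplus_g g^*(z)\Omega_Y(B)_Q$ inside $\Omega_X(R)_Q$ is \emph{not} determined by the valuations $\ord_P(z)\ge -d_P'$ alone (locally, the $g^*(z)$ could fail to span a maximal lattice), and it cannot be extracted from the global count of part (3) either, since that only controls the sum over $Q$ and in any case already presupposes vanishing at the unramified points. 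Note that nowhere in your (2) do you use $\tr_{X/Y}(z)\neq 0$, which is a warning sign: this is precisely where magicality must enter. The paper's proof makes this input explicit, working on the function side: writing $z^{\vee}=\sum_g g^*(z)f_g$, it uses Cramer's rule together with the Group Determinant Formula \eqref{eqn:G-determinant}, which gives $\det[\tr_{X/Y}(h_1(z)h_2(z))]=\pm\tr_{X/Y}(z)^{2\#G}\in k^{\times}$, and the trace estimate \eqref{eqn:valuation_of_trace} to conclude $f_g\in\mc O_{Y,Q}$, hence $\ord_P(z^{\vee})\ge -d_P'$ when $d_Q''=0$; then $\bigoplus_g g^*(z^{\vee})\mc O_Y(-B)_Q\subset\mc O_X(-R)_Q$, so $H^1_Q=0$ and $H^0_Q=0$ follows from (1). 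Your length-comparison idea can be repaired in the same spirit (the unit discriminant of the basis $\{g^*(z)\}$ pins down the local colength of the $z$-lattice), but that discriminant input, or some equivalent use of $\tr_{X/Y}(z)\in k^{\times}$, has to be supplied; without it the step ``the quotient has length $0$'' is unjustified.
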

\begin{proof}
	(1) One checks that the duality pairing is induced by:
		\begin{align*}
			\Omega_X(R)_Q \times \bigoplus_{g \in G} g^*(z^{\vee}) \mc O_Y(-B)_Q &\to k,\\
			(\omega, f) &\mapsto \sum_{g \in G} \res_Q(\omega_g \cdot f_g).
		\end{align*}
		We omit the details.
		
	(2) Suppose that $d_Q'' = 0$. Then $d_Q = 2 \cdot (e_Q - 1)$ and $d_Q' = (e_Q - 1)$.
		Write:
		\begin{equation*}
			z^{\vee} = \sum_{g \in G} g^*(z) \cdot f_g \qquad \textrm{ for } f_g \in k(Y).
		\end{equation*}
		Then~\eqref{eqn:def_of_dual_elt} yields $f_g = \det[a_{h_1 h_2}]_{h_1, h_2 \in G}/\det[\tr_{X/Y}(h_1(z) \cdot h_2(z))]_{h_1, h_2 \in G}$, where:
		\[
		a_{h_1 h_2} :=
		\begin{cases}
			tr_{X/Y}(h_1(z) \cdot h_2(z)), & h_1 \neq g,\\
			\delta_{h_2 g}, & h_1 = g.	
		\end{cases}
		\]
		But~the Group Determinant Formula~\eqref{eqn:G-determinant} easily implies that
		\begin{align*}
			\det[\tr_{X/Y}(h_1(z) \cdot h_2(z))]_{h_1, h_2 \in G} &= 
			\det[\tr_{X/Y}((h_1 \cdot h_2')(z) \cdot z)]_{h_1, h_2' \in G}\\
			&= \pm \tr_{X/Y}(z)^{2 \cdot \# G} \in k^{\times}.
		\end{align*}
		Moreover, \eqref{eqn:valuation_of_trace} yields that 
		\[
		tr_{X/Y}(h_1(z) \cdot h_2(z)) \in \mf m_{Y, Q}^{\left \lfloor \frac{-2d'_Q + d_Q}{e_Q} \right \rfloor} = \mc O_{X, Q}.
		\]
		Hence $f_g \in \mc O_{X, Q}$ and $\ord_Q(z^{\vee}) \ge \ord_Q(z) = - d_Q'$.
		Thus
		if $f \in \mc O_Y(-B)_Q$, then for any $P \in \pi^{-1}(Q)$:
		\[
		\ord_P(g^*(z^{\vee}) \cdot f) \ge -d_Q' + e_Q = 1
		\]
		and $g^*(z^{\vee}) \cdot f \in \mc O_X(-R)_Q$.
		It follows that $H^1_Q = 0$ and thus also $H^0_Q = 0$ by~(1).
		
	(3) This follows from~\eqref{eqn:quotient_differentials} by taking global sections and applying the Riemann--Hurwitz formula and Riemann--Roch theorem.
\end{proof}
We end this section by giving some necessary conditions for $\pi$ to have a magical element.
One of the conditions will play a role in the proof of Theorem~\ref{thm:main_thm} in Section~\ref{sec:dR}.
\begin{Lemma} \label{lem:GME_implies_no_etale_cover}
	Keep assumptions of Theorem~\ref{thm:main_thm}. Then:
	\begin{enumerate}[(1)]
		\item $\pi$ does not factor through an \'{e}tale morphism $X' \to Y$
		of degree $> 1$,
		
		\item $\langle G_Q : Q \in B \rangle = G$. 
	\end{enumerate}
\end{Lemma}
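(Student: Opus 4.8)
The plan is to reduce (2) to (1), and to prove (1) by contradiction using the trace of the magical element. For~(2), put $H := \langle G_Q : Q \in B \rangle$; by~\ref{enum:A} every $G_Q$ is normal in $G$, so $H \trianglelefteq G$ and $\psi : X' := X/H \to Y$ is a Galois cover with group $G/H$. If $Q \in Y(k)$ and $P \in \pi^{-1}(Q)$ has image $P'$ on $X'$, then the inertia group of $\psi$ at $P'$ is the image of $G_P = G_Q$ in $G/H$, which is trivial because $G_Q \subseteq H$; hence $\psi$ is unramified everywhere, i.e. étale, and $\pi$ factors through it. Thus, once (1) is proved, $[G : H] = \deg \psi = 1$, which is exactly the assertion of~(2).

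So assume towards a contradiction that $\pi$ factors as $X \xrightarrow{\rho} X' \xrightarrow{\psi} Y$ with $\psi$ finite étale of degree $n > 1$, and set $N := \Gal(k(X)/k(X')) \le G$, so that $\rho$ is Galois with group $N$ and $n = [G : N]$. Since $G$ is a $p$-group, $n$ is a power of $p$, and $n > 1$ forces $p \mid n$. The trace is transitive, $\tr_{X/Y} = \tr_{X'/Y} \circ \tr_{X/X'}$, so it suffices to prove that $w := \tr_{X/X'}(z)$ belongs to $k$: then $w \in k(Y)$, and by $k(Y)$-linearity $\tr_{X/Y}(z) = \tr_{X'/Y}(w) = n \cdot w = 0$ in characteristic $p$, contradicting $\tr_{X/Y}(z) \neq 0$.

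To see $w \in k$, fix $Q' \in X'(k)$, choose $P \in \rho^{-1}(Q')$, and set $Q := \psi(Q')$. As $\psi$ is unramified at $Q'$ we have $e_{X/X', P} = e_{X/Y, P}$; comparing the orders of the inertia groups (which coincide with the point stabilizers, since $k$ is algebraically closed) gives $\# (N \cap G_{P,0}) = \# G_{P,0}$, hence $G_{P,0} \subseteq N$, and therefore $G_{P,i} \subseteq N$ for every $i \ge 0$. Consequently the $i$-th ramification group of $\rho$ at $P$, which by definition is $N \cap G_{P,i}$, equals $G_{P,i}$; in particular $d_{X/X',P} = \sum_{i \ge 0}(\# G_{P,i} - 1) = d_P$. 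Combining this with the magical element bound $\ord_P(z) \ge - d_P'$ and with $d_P - d_P' = \# G_{P,0} - 1 \ge 0$ yields $\ord_P(z) + d_{X/X', P} \ge 0$ for every $P \in \rho^{-1}(Q')$. The trace estimate~\eqref{eqn:valuation_of_trace}, applied to the separable cover $\rho$, therefore gives $w = \tr_{X/X'}(z) \in \mc O_{X', Q'}$. Since $Q'$ was arbitrary and $X'$ is a connected smooth projective curve (being a quotient of the connected curve $X$), we conclude $w \in H^0(X', \mc O_{X'}) = k$, which completes the proof of~(1).

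I expect the only genuinely delicate point to be the ramification bookkeeping in the last paragraph: one must verify that an étale intermediate cover contributes nothing to the different, so that $d_{X/X',P}$ stays as large as $d_P$ and the pole bound $\ord_P(z) \ge -d_P'$ of the magical element is preserved by the trace. This is exactly where the étaleness of $\psi$ enters; the remaining steps are formal manipulations with the trace and with group orders.
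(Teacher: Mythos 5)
Your proposal is correct and follows essentially the same route as the paper: part (1) is the paper's argument that $\tr_{X/X'}(z)$ is a constant (the paper phrases this as ``$z$ is also a magical element for $X \to X'$''), so that transitivity of the trace gives $\tr_{X/Y}(z) = [k(X'):k(Y)]\cdot \tr_{X/X'}(z) = 0$ in characteristic~$p$, and part (2) is obtained exactly as in the paper by applying (1) to the \'etale subcover $X/\langle G_Q : Q \in B\rangle \to Y$. Your version merely spells out the ramification bookkeeping ($d_{X/X',P} = d_P$, $d'_{X/X',P} = d'_P$ via $N \cap G_{P,i} = G_{P,i}$) that the paper leaves implicit, and it cleanly sidesteps having to know $\tr_{X/X'}(z) \neq 0$.
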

\begin{proof}
	\begin{enumerate}[(1)]
		\item Suppose to the contrary that $\pi$ factors through a non-trivial \'{e}tale morphism $X' \to Y$.
		Then $z$ is also a magical element for the cover $X \to X'$. Hence $\tr_{X/X'}(z) \in k^{\times}$.
		But then:
		\[
		\tr_{X/Y}(z) = \tr_{X'/Y}(\tr_{X/X'}(z)) = [k(X') : k(Y)] \cdot \tr_{X/X'}(z) = 0.
		\]
		Contradiction ends the proof.
		
		\item Note that $X' := X/\langle G_Q : Q \in B \rangle \to Y$ is an \'{e}tale subcover
		of $\pi$. Thus by~(1) it must be of degree $1$ and $G = \langle G_Q : Q \in B \rangle$.
	\end{enumerate}
\end{proof}

\section{Holomorphic differentials} \label{sec:OmegaX}
The goal of this section is to prove the part of Theorem~\ref{thm:main_thm} concerning $H^0(X, \Omega_X)$ and $H^1(X, \mc O_X)$. The first step in this direction is to compare holomorphic differentials and logarithmic differentials with poles in the ramification locus. This is achieved by the Proposition~\ref{prop:log_diffs_and_diffs} below. For any $Q \in B$ and $g \in G$, let $g_Q \in \bigoplus_{B} k[G]$ be the element with $g$ on the $Q$-th component and $0$ on other components. Define also $k[G]_B := \ker \left( \sum : \bigoplus_B k[G] \to k[G] \right)$ (note that $k[G]_B \cong k[G]^{\# B - 1}$ as a $k[G]$-module). 
\begin{Proposition} \label{prop:log_diffs_and_diffs}
	Keep assumptions of Theorem~\ref{thm:main_thm}. The map:
	\[
	\res_G : \Omega_{k(X)} \to \bigoplus_{Q \in B} k[G], \quad \omega \mapsto \sum_{Q \in B} \sum_{g \in G} \res_Q(\omega_g) \cdot g_Q
	\]	
	induces split $k[G]$-linear surjections $\res_G : H^0(X, \Omega_X) \to I_{X/Y}$ and $\res_G : H^0(X, \Omega_X(R)) \to k[G]_B$
	with the same kernel.
\end{Proposition}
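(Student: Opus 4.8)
The plan is to produce, for each of the two surjections in question, an explicit equivariant section built from differential forms on $Y$ with prescribed residues, and then to compare the two kernels. First I would handle the logarithmic case, i.e.\ the map $\res_G : H^0(X, \Omega_X(R)) \to k[G]_B$. To see that the image lands in $k[G]_B$, note that for $\omega \in H^0(X, \Omega_X(R))$ each $\omega_g \in \Omega_{k(Y)}$ has at worst logarithmic poles supported on $B$ (this uses the inclusion of sheaves in Lemma~\ref{lem:inclusions_of_modules} applied componentwise via~\eqref{eqn:gth_component_trace_f}, together with~\eqref{eqn:valuation_of_diff_form}), hence $\sum_{Q \in B} \res_Q(\omega_g) = 0$ by the residue theorem on $Y$; this is condition~(1). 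For surjectivity and splitting, I would fix for each $Q \in B$ a uniformizer and, given a target vector $\sum_Q \sum_g a_{g,Q} g_Q \in k[G]_B$, write down forms $\eta_g := \sum_{Q \in B} a_{g,Q} \, \frac{dt_Q}{t_Q} + (\text{correction})$ on $Y$ realizing exactly these residues — possible precisely because the $a_{g,Q}$ satisfy the residue relation — and set $\eta := \sum_g g^*(z)\,\eta_g$. One checks $\eta \in H^0(X, \Omega_X(R))$ again via Lemma~\ref{lem:inclusions_of_modules}, and $g$-equivariance of the whole construction follows from~\eqref{eqn:g_component_of_h}. The correction terms can be chosen functorially in the $a_{g,Q}$ (e.g.\ by also fixing the behaviour at a finite set of auxiliary points, or by using a fixed splitting of $\Omega_{k(Y)} \twoheadrightarrow k[G]_B$-type data on $Y$), which is what makes the section $k[G]$-linear.

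Next I would treat the holomorphic case. The extra input is condition~(2): if $\omega \in H^0(X, \Omega_X)$ then $\res_P(\omega) = 0$ for every $P \in \pi^{-1}(B)$, and I must show this, together with condition~(1), is exactly membership in $I_{X/Y} = \ker(\sum : \bigoplus_Q I_{G,G_Q} \to I_G)$. For a fixed $Q$, the residues $(\res_Q(\omega_g))_{g \in G}$ form an element of $k[G]$; the vanishing $\res_P(\omega) = 0$ for all $P \mid Q$, rewritten through $\res_P(\omega) = \res_P(\sum_g g^*(z)\omega_g)$ and the trace–residue compatibility~\eqref{eqn:residue_and_trace} applied over the subgroup $G_Q$ (using that $z$ is a normal basis element, Proposition~\ref{prop:g(z)_is_a_basis}), pins this element down to lie in the relative augmentation ideal $I_{G,G_Q}$; the identification is essentially that $\res_P = 0$ for all $P \mid Q$ is equivalent to the $G_Q$-coinvariant part vanishing, which is the defining condition of $\Ind^G_{G_Q} I_{G_Q} \subset k[G]$. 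Then condition~(1) becomes $\sum_Q(\text{these elements}) = 0$ in $I_G$, so the pair of conditions is exactly $I_{X/Y}$. For the section, I would restrict the section constructed above: starting from an element of $I_{X/Y} \subset k[G]_B$, the forms $\eta_g$ produced can be arranged to additionally satisfy the local vanishing at ramified points (this is where $I_{X/Y}$, being cut out by precisely conditions (1) and (2), guarantees consistency), so the same formula $\eta = \sum_g g^*(z)\eta_g$ now lies in $H^0(X, \Omega_X)$; equivariance is inherited.

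Finally, the two kernels: I claim $\ker(\res_G|_{H^0(X,\Omega_X)}) = \ker(\res_G|_{H^0(X,\Omega_X(R))})$. One inclusion is trivial since $H^0(X, \Omega_X) \subset H^0(X, \Omega_X(R))$ and the maps agree. For the other, I would show that any $\omega \in H^0(X, \Omega_X(R))$ with $\res_Q(\omega_g) = 0$ for all $Q \in B$, $g \in G$ actually has no poles at all on $X$: the condition $\res_Q(\omega_g)=0$ forces $\omega_g \in \Omega_{Y,Q}$ (holomorphic at $Q$, since a logarithmic pole with zero residue is no pole), hence $\omega = \sum_g g^*(z)\omega_g \in \bigoplus_g g^*(z)\Omega_{Y,Q} \subset \Omega_{X,Q}$ by the first inclusion of Lemma~\ref{lem:inclusions_of_modules}, so $\omega \in H^0(X, \Omega_X)$. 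The main obstacle I anticipate is making the section genuinely $k[G]$-linear rather than merely a set-theoretic right inverse: the naive pointwise construction of the $\eta_g$ involves choices (uniformizers, correction terms to kill unwanted residues or to enforce the condition~(2)), and one must organize these choices so that the assignment $(\text{residue data}) \mapsto \eta$ commutes with the $G$-action as in~\eqref{eqn:g_component_of_h}; the cleanest route is probably to fix once and for all a $k$-linear section of the residue map $\bigoplus_{Q\in B}\Omega_{Y}(B)_Q \to (\text{residue space on } Y)$ at the level of $Y$ and then induce it up, so that equivariance is automatic from the formula $\eta = \sum_g g^*(z)\eta_g$ and the relabelling~\eqref{eqn:g_component_of_h}.
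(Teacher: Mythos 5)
Your overall architecture (conditions (1) and (2) on residues, an explicit section built from forms on $Y$ with prescribed residues multiplied by $g^*(z)$, equality of the two kernels via a cartesian-type comparison) is the same as the paper's, and your section construction is essentially the paper's choice of fixed forms $\eta_Q$ with residue $+1$ at $Q$ and $-1$ at a base point. But there is a genuine gap at the central step, and it propagates into several places. You assert that for $\omega \in H^0(X,\Omega_X(R))$ each component $\omega_g$ has at worst logarithmic poles supported on $B$, citing Lemma~\ref{lem:inclusions_of_modules} ``componentwise''. That lemma only gives the inclusion $\bigoplus_g g^*(z)\Omega_Y(B) \subset \pi_*\Omega_X(R)$, which is the wrong direction; the reverse containment fails in general, and in fact if it held then the local module $H^0_Q$ of~\eqref{eqn:H0Q} would always vanish, contradicting Lemma~\ref{lem:properties_H0Q_H1Q}~(3) whenever $d_Q''>0$. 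For condition (1) this is harmless (all you need is that $\omega_g=\tr_{X/Y}(g^*(z^{\vee})\omega)$ is regular on $U$, which follows from~\eqref{eqn:gth_component_trace_f} and~\eqref{eqn:trace_and_diff_forms}), but your kernel argument uses it essentially: ``$\res_Q(\omega_g)=0$ forces $\omega_g\in\Omega_{Y,Q}$ since a logarithmic pole with zero residue is no pole'' is unjustified because $\omega_g$ may have higher-order poles at $Q$, so your proof that the two kernels coincide collapses.

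The deeper missing ingredient is the equivalence, for $\omega\in\Omega_X(R)_Q$, between ``$\omega$ is holomorphic at all $P$ above $Q$'' and ``$\sum_{g\in g_0G_Q}\res_Q(\omega_g)=0$ for every $g_0$''. You treat this as ``essentially'' the definition of $I_{G,G_Q}$ via $G_Q$-coinvariants, but it is not a formal fact about induced modules: it is the technical heart of the proof (the paper's Lemma~\ref{lem:main_lemma_Omega_Y}), and proving it requires precisely the local properties of the magical element. Concretely, one must express the coset sums $\sum_{g\in g_0G_Q}\omega_g$ as $\mc O_{X_Q,Q}$-linear combinations of the forms $g^*(\tr_{X/X_Q}(\omega))$, which uses $z_Q=\tr_{X/X_Q}(z)\in\mc O_{X_Q,Q}$ (Lemma~\ref{lem:zQ_regular}, resting on $\ord_P(z)\ge -d_P'$), normality of $G_Q$ (assumption~\ref{enum:A}), and the Group Determinant Formula~\eqref{eqn:G-determinant} applied to $G/G_Q$ to invert the resulting linear system; Proposition~\ref{prop:g(z)_is_a_basis} alone is only a generic (function-field) statement and the trace--residue formula~\eqref{eqn:residue_and_trace} by itself does not let you pull regular functions out of residues when the $\omega_g$ have higher-order poles. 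Without this lemma you cannot justify (i) that $\res_G(H^0(X,\Omega_X))\subset I_{X/Y}$, (ii) that your section applied to an element of $I_{X/Y}$ produces a \emph{holomorphic} form (your ``consistency'' remark presupposes exactly the unproved equivalence), nor (iii) the equality of kernels. Supplying this local lemma (or an equivalent argument with the same ingredients) is what your proposal still needs.
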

We first show how Proposition~\ref{prop:log_diffs_and_diffs} implies the part of Theorem~\ref{thm:main_thm}
concerning $H^0(X, \Omega_X)$ and $H^1(X, \mc O_X)$.
\begin{proof}[Proof of Theorem~\ref{thm:main_thm}, part 1]
	Proposition~\ref{prop:log_diffs_and_diffs} implies that we have the following isomorphisms of right $k[G]$-modules:
	\begin{align}
		H^0(X, \Omega_X) \oplus k[G]^{\# B - 1} \label{eqn:H0Omega=H0OmegaR}
		&\cong \ker(\res_G : H^0(X, \Omega_X) \to I_{X/Y}) \oplus I_{X/Y} \oplus k[G]_B\\
		&\cong \ker(\res_G : H^0(X, \Omega_X(R)) \to k[G]_B) \oplus I_{X/Y} \oplus k[G]_B \nonumber \\
		&\cong H^0(X, \Omega_X(R)) \oplus I_{X/Y}. \nonumber
	\end{align}
	Recall that by~\eqref{eqn:quotient_differentials} we have an exact sequence of $\mc O_Y$-modules:
	\[
	0 \to \bigoplus_{g \in G} g^*(z) \Omega_Y(B) \to \pi_* \Omega_X(R) \to \bigoplus_{Q \in Y(k)} i_{Q, *}(H^0_Q) \to 0.
	\]
	Moreover, by Serre's duality and Riemann--Roch theorem (cf.~\cite[Corollary III.7.7, Theorem IV.1.3]{Hartshorne1977}):
	\begin{align*}
		\dim_k H^0(Y, \Omega_Y(B)) &= g_Y + \# B - 1,\\
		\dim_k H^1(Y, \Omega_Y(B)) &= 0.
	\end{align*}
	(note that for $\# G > 1$, we have $B \neq \varnothing$ by Lemma \ref{lem:GME_implies_no_etale_cover}~(1)). Hence, after taking sections:
	\[
	0 \to k[G]^{g_Y + \# B - 1} \to H^0(X, \Omega_X(R)) \to \bigoplus_{Q \in B} H^0_Q \to 0.
	\]
	Note that $k[G]$ is injective as a $k[G]$-module (see~\cite[Corollary 8.5.3]{Webb_finite_group_representations}). Hence:
	\[
	H^0(X, \Omega_X(R)) \cong k[G]^{g_Y + \# B - 1} \oplus \bigoplus_{Q \in B} H^0_Q.
	\label{eqn:structure_Omega_X_R}
	\]
	We combine this with~\eqref{eqn:H0Omega=H0OmegaR} to obtain:
	\begin{equation} \label{eqn:proof_of_MT1_before_dividing}
		H^0(X, \Omega_X) \oplus k[G]^{\# B - 1} \cong k[G]^{g_Y + \# B - 1} \oplus I_{X/Y} \oplus \bigoplus_{Q \in B} H^0_Q.
	\end{equation}
	Since every $k[G]$-module has a unique decomposition into indecomposable $k[G]$-modules (cf. \cite[Corollary~11.1.7.]{Webb_finite_group_representations}), we may subtract $k[G]^{\# B - 1}$ from both sides
	of~\eqref{eqn:proof_of_MT1_before_dividing}. In this way we obtain the
	part of Theorem~\ref{thm:main_thm} concerning $H^0(X, \Omega_X)$. Finally, by Serre's duality,
	$H^1(X, \mc O_X)$ is the dual of $H^0(X, \Omega_X)$. This immediately implies the part
	of Theorem~\ref{thm:main_thm} concerning the cohomology of the structure sheaf.
\end{proof}
The proof of Proposition~\ref{prop:log_diffs_and_diffs} will occupy the rest of this section.
In the sequel we will need the relative augmentation ideal $I_{G, H}$ (as defined in Section~1),
where $H$ is a normal subgroup of $G$. We identify it with:
\[
I_{G, H} = \left\{ \sum_{g \in G} a_g g \in k[G] : \sum_{g \in g_0 H} a_g = 0 \quad \forall_{g_0 \in G} \right\}.
\]

Recall that in order to prove Proposition~\ref{prop:log_diffs_and_diffs} we study the image
of $H^0(X, \Omega_X)$ under the map $\res_G$. Let $\omega \in H^0(X, \Omega_X)$. Suppose for simplicity that $\pi^{-1}(Q) = \{ P \}$ and
$\tr_{X/Y}(z) = 1$. Then by~\eqref{eqn:residue_and_trace} $\res_P(g^*(z) \omega_g) = \res_Q(\omega_g)$. Hence:
\begin{align*}
	\sum_{g \in G} \res_Q(\omega_g) = \res_P(\omega) = 0
\end{align*}
and $\sum_{g \in G} \res_Q(\omega_g) \cdot g \in I_G$. In general, if $G_Q \neq G$, $\sum_{g \in G} \res_Q(\omega_g) \cdot g \in I_{G, G_Q}$, which is a consequence of the following lemma.
\begin{Lemma} \label{lem:main_lemma_Omega_Y}
	Keep assumptions of Theorem~\ref{thm:main_thm} and let $Q \in B$.
	\begin{enumerate}[(1), labelindent=0pt, itemindent=0pt, labelwidth=!]
		\item For every $\omega \in \Omega_{k(X)}$ and $g_0 \in G$, the form
		\[
			\sum_{g \in g_0 G_Q} \omega_g
		\]
		can be expressed as a linear combination of forms $g^*(\tr_{X/X_Q}(\omega))$ for $g \in G$ with coefficients
		in $\mc O_{X_Q, Q}$.

		\item For every $\omega \in \Omega_X(R)_Q$ one has:
		\[
			\omega \in \Omega_{X, Q} \quad \Leftrightarrow \quad \forall_{g_0 \in G} \, \sum_{g \in g_0 G_Q} \res_Q(\omega_g) = 0.
		\]
	\end{enumerate}
\end{Lemma}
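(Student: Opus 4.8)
The plan is to descend the entire statement along the intermediate quotient $\pi_Q \colon X_Q = X/G_Q \to Y$, which by assumption~\ref{enum:A} is $\ol{G}$-Galois for $\ol{G} := G/G_Q$ and is unramified over $Q$ (since $G_P = G_{P,0} = G_Q$ is the inertia group there). Set $z_Q := \tr_{X/X_Q}(z)$, $z_Q^{\vee} := \tr_{X/X_Q}(z^{\vee})$, and $\eta := \tr_{X/X_Q}(\omega) \in \Omega_{k(X_Q)}$ for the given $\omega \in \Omega_{k(X)}$. I would first record three facts, each obtained by a short manipulation with~\eqref{eqn:def_of_dual_elt}, the normality of $G_Q$, and $\tr_{X_Q/Y} \circ \tr_{X/X_Q} = \tr_{X/Y}$: \emph{(i)} every $g \in G$ descends to an automorphism $\ol{g}$ of $X_Q$, and $g^*(z_Q), g^*(z_Q^{\vee})$ depend only on the coset $gG_Q \in \ol{G}$; \emph{(ii)} $\{g^*(z_Q)\}_{gG_Q \in \ol{G}}$ and $\{g^*(z_Q^{\vee})\}_{gG_Q \in \ol{G}}$ are mutually dual $k(Y)$-bases of $k(X_Q)$ for the cover $X_Q \to Y$, with $\tr_{X_Q/Y}(z_Q) = \tr_{X/Y}(z)$ and $\tr_{X_Q/Y}(z_Q^{\vee}) = \tr_{X/Y}(z^{\vee})$, both in $k^{\times}$ by~\eqref{eqn:trace_of_dual_z}; \emph{(iii)} expanding $\tr_{X/X_Q}(\omega) = \sum_{h \in G_Q} h^*(\omega)$ and re-indexing gives $\eta = \sum_{gG_Q \in \ol{G}} g^*(z_Q) \cdot \big(\sum_{g' \in gG_Q} \omega_{g'}\big)$, so that $\sum_{g' \in g_0 G_Q} \omega_{g'}$ is precisely the $g_0 G_Q$-component of $\eta$ with respect to the basis $\{g^*(z_Q)\}$.

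For part~(1), I would apply each $\ol{h}^* \in \ol{G}$ to identity~\emph{(iii)} (using that $\ol{h}^*$ fixes $\Omega_{k(Y)}$) to get a linear system expressing $(h^*(\eta))_{hG_Q}$ in terms of $\big(\sum_{g' \in g_0 G_Q} \omega_{g'}\big)_{g_0 G_Q}$ with coefficient matrix $M = [(g_0 h)^*(z_Q)]$. By the Group Determinant Formula~\eqref{eqn:G-determinant}, $\det M = \pm \tr_{X_Q/Y}(z_Q)^{\# \ol{G}} = \pm \tr_{X/Y}(z)^{\# \ol{G}} \in k^{\times}$; since $z_Q \in \mc O_{X_Q, Q}$ by Lemma~\ref{lem:zQ_regular} and each automorphism of $X_Q$ permutes the points over $Q$, $M$ has entries in $\mc O_{X_Q, Q}$, so by Cramer's rule so does $M^{-1}$. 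Inverting writes $\sum_{g' \in g_0 G_Q} \omega_{g'}$ as an $\mc O_{X_Q, Q}$-linear combination of the forms $h^*(\tr_{X/X_Q}(\omega))$, $h \in G$, which is~(1). Direction ``$\Rightarrow$'' of part~(2) is then immediate: if $\omega \in \Omega_{X, Q}$ then $\eta = \tr_{X/X_Q}(\omega) \in \Omega_{X_Q, Q}$ by~\eqref{eqn:trace_and_diff_forms}, so~(1) places $\sum_{g' \in g_0 G_Q} \omega_{g'}$ in $\Omega_{X_Q, Q} \cap \Omega_{k(Y)} = \Omega_{Y, Q}$ (the equality because $X_Q \to Y$ is unramified over $Q$, via~\eqref{eqn:valuation_of_diff_form}), so its residue at $Q$ vanishes.

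For the substantive direction ``$\Leftarrow$'' of part~(2), take $\omega \in \Omega_X(R)_Q$ with $\sum_{g' \in g_0 G_Q} \res_Q(\omega_{g'}) = 0$ for all $g_0 G_Q$. Using $\omega_g = \tr_{X/Y}(g^*(z^{\vee}) \omega)$ from~\eqref{eqn:gth_component_trace_f} together with~\eqref{eqn:residue_and_trace}, I would rewrite $\sum_{g' \in g_0 G_Q} \res_Q(\omega_{g'}) = \sum_{P \in \pi^{-1}(Q)} \res_P\big(g_0^*(z_Q^{\vee}) \cdot \omega\big)$, where $g_0^*(z_Q^{\vee}) = \sum_{g' \in g_0 G_Q} (g')^*(z^{\vee}) \in k(X_Q)$. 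Since $X \to X_Q$ is totally ramified over each point of $X_Q$ above $Q$, every such point $\ol{P}$ has a unique preimage $P$, so~\eqref{eqn:residue_and_trace} for $X \to X_Q$ and the projection formula give $\res_P(g_0^*(z_Q^{\vee}) \omega) = \res_{\ol{P}}(g_0^*(z_Q^{\vee}) \cdot \eta)$. Now $\eta$ has at most simple poles above $Q$ by~\eqref{eqn:trace_and_diff_forms}, and $z_Q^{\vee}$ is \emph{regular} above $Q$: by~\emph{(ii)} the basis $\{g^*(z_Q)\}$ lies in $\mc O_{X_Q, Q}$ (Lemma~\ref{lem:zQ_regular}) and has discriminant $\det[\tr_{X_Q/Y}(g_1^*(z_Q) g_2^*(z_Q))] = \pm \tr_{X/Y}(z)^{2 \# \ol{G}} \in k^{\times}$ (by~\eqref{eqn:G-determinant}, exactly as in the proof of Lemma~\ref{lem:properties_H0Q_H1Q}~(2)), hence it is an $\mc O_{Y, Q}$-basis of $\mc O_{X_Q, Q}$ and so its trace-dual basis $\{g^*(z_Q^{\vee})\}$ is too, forcing $z_Q^{\vee} \in \mc O_{X_Q, Q}$. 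Therefore $\res_{\ol{P}}(g_0^*(z_Q^{\vee}) \eta) = g_0^*(z_Q^{\vee})(\ol{P}) \cdot \res_{\ol{P}}(\eta)$, and the hypothesis becomes a homogeneous linear system in the residues $\res_{\ol{P}}(\eta)$ over the $\#\ol{G}$ points $\ol{P}$ above $Q$. Writing these points as a free orbit $\{\ol{g}(\wt{Q})\}_{\ol{g} \in \ol{G}}$, the coefficient matrix is a group matrix in the scalars $z_Q^{\vee}(\ol{k}(\wt{Q}))$, with determinant $\pm \tr_{X_Q/Y}(z_Q^{\vee})^{\# \ol{G}} = \pm \tr_{X/Y}(z^{\vee})^{\# \ol{G}} \in k^{\times}$ by~\eqref{eqn:G-determinant} and~\eqref{eqn:trace_of_dual_z}. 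Hence every $\res_{\ol{P}}(\eta)$ vanishes, so $\eta \in \Omega_{X_Q, Q}$, and then $\res_P(\omega) = \res_{\ol{P}}(\eta) = 0$ for all $P \in \pi^{-1}(Q)$ forces $\omega \in \Omega_{X, Q}$, since $\omega$ has at most simple poles there.

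The step I expect to be the main obstacle is the regularity of $z_Q^{\vee} = \tr_{X/X_Q}(z^{\vee})$ above $Q$: we have no direct control on the poles of $z^{\vee}$ itself, so this has to be extracted indirectly from the perfectness of the trace pairing of the unramified-at-$Q$ extension $\mc O_{X_Q, Q} / \mc O_{Y, Q}$ — equivalently, from the fact that $\{g^*(z_Q)\}$ has unit discriminant and is therefore an \emph{integral}, not merely rational, normal basis near $Q$. Beyond that, the only delicate point is the bookkeeping: keeping straight the composition law for pullbacks, the passage between cosets of $G_Q$ and elements of $\ol{G}$, and the indexing that puts the two relevant matrices into group-matrix form so that~\eqref{eqn:G-determinant} applies.
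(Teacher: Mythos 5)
Your argument is correct. Part~(1) and the implication ``$\Rightarrow$'' of part~(2) are essentially the paper's own proof: descend along $X_Q \to Y$, write $\tr_{X/X_Q}(\omega)=\sum_{\ol g} g^*(z_Q)\cdot\bigl(\sum_{g'\in gG_Q}\omega_{g'}\bigr)$, apply the translates $\ol h^*$, and invert the group matrix using \eqref{eqn:G-determinant} and Lemma~\ref{lem:zQ_regular}. Where you genuinely diverge is the substantive direction ``$\Leftarrow$'' of~(2). The paper stays on the ``primal'' side: from part~(1) the coset sums $\omega_i$ lie in $\Omega_{X_Q}(\ol R)_Q\cap\Omega_{k(Y)}=\Omega_Y(B)_Q$, the residue hypothesis makes them holomorphic, so $\tr_{X/X_Q}(\omega)=\sum_i g_i^*(z_Q)\,\omega_i\in\Omega_{X_Q,Q}$, and then \eqref{eqn:residue_and_trace} kills $\res_{P_j}(\omega)$ -- no further input is needed, in particular nothing about $z^{\vee}$. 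You instead dualize: you pair $\omega$ against the translates $g_0^*(z_Q^{\vee})$, turn the hypothesis into a homogeneous group-matrix system in the residues of $\eta=\tr_{X/X_Q}(\omega)$ at the fibre over $Q$, and conclude via a second application of \eqref{eqn:G-determinant} together with \eqref{eqn:trace_of_dual_z}. This forces you to know $z_Q^{\vee}\in\mc O_{X_Q,Q}$, which the paper only obtains afterwards (Corollary~\ref{cor:main_lemma_OX}) \emph{as a consequence} of this lemma; you correctly noticed the danger of circularity and supplied an independent derivation, namely that $\{g^*(z_Q)\}$ is an integral normal basis with unit discriminant $\pm\tr_{X/Y}(z)^{2\#(G/G_Q)}$, so its trace-dual $\{g^*(z_Q^{\vee})\}$ is again integral at $Q$ (here unramifiedness of $X_Q\to Y$ over $Q$, equivalently the unit discriminant, is exactly what is needed). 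The trade-off: the paper's route is shorter and self-contained, while yours isolates the local integrality of $z_Q^{\vee}$ up front (making Corollary~\ref{cor:main_lemma_OX} an immediate by-product rather than a downstream corollary) and makes the duality between the two residue conditions -- the one defining $I_{G,G_Q}$ and the pairing against $z_Q^{\vee}$ used later in Sections~\ref{sec:OX} and~\ref{sec:dR} -- visible already at this stage.
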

\begin{proof}
	Fix a point $P \in \pi^{-1}(Q)$. Let $G/G_Q = \{ g_1 G_Q, \ldots, g_r G_Q \}$ and $P_i := g_i(P)$.
	Define $\omega_i := \sum_{g \in g_i G_Q} \omega_g$ for $i = 1, \ldots, r$.
	
	(1) Observe that for any $g \in g_i G_Q$:
		\begin{align}
			\tr_{X/X_Q}(g^*(z)) = \sum_{h \in g G_Q} h^*(z) = \sum_{h \in g_i G_Q} h^*(z) = \sum_{h \in G_Q g_i} h^*(z) = g_i^*(z_Q) \label{eqn:tr_XQ_gz}
		\end{align}
		(we used the fact that $G_Q$ is normal in $G$). Hence:
		\[
		\tr_{X/X_Q}(\omega) = \sum_{g \in G} \tr_{X/X_Q}(g^*(z)) \cdot \omega_g = \sum_{i = 1}^r g_i^*(z_Q) \cdot \omega_i.
		\]
		This implies that:
		\begin{equation} \label{eqn:system_eqns_tr_eta}
			g_j^*(\tr_{X/X_Q}(\omega)) = \sum_{i = 1}^r (g_i \cdot g_j)^*(z_Q) \cdot \omega_i
		\end{equation}
		for every $j = 1, \ldots, r$. By the Group Determinant Formula~\eqref{eqn:G-determinant} for the group $G/G_Q$:
		\[
		\det[(g_i \cdot g_j)^*(z_Q)] = \pm \left(\sum_{i=1}^r g_i^*(z_Q) \right)^{\# G/G_Q} = \pm \tr_{X/Y}(z)^{\# G/G_Q} \in k^{\times}.
		\]
		Furthermore, since $z_Q \in \mc O_{X_Q, Q}$ and $G_Q \unlhd G$, we have $g_i^*(z_Q) \in \mc O_{X_Q, Q}$ for any $i = 1, \ldots, r$.
		Therefore the system of linear equations~\eqref{eqn:system_eqns_tr_eta} and Lemma~\ref{lem:zQ_regular} imply that $\omega_i$ can
		be expressed as a combination of forms $g_j^*(\tr_{X/X_Q}(\omega))$ with coefficients in $\mc O_{X_Q, Q}$.\\
		
	(2) If $\omega \in \Omega_{X, Q}$ then $\res_Q(\omega_i) = 0$ for $i = 1, \ldots, r$ by part~(1) and~\eqref{eqn:trace_and_diff_forms}.
	Suppose now that $\omega \in \Omega_X(R)_Q$ and $\res_Q(\omega_i) = 0$ for $i = 1, \ldots, r$.
		Note that $\tr_{X/X_Q}(\omega) \in \Omega_X(\ol{R})_{Q}$ by~\eqref{eqn:trace_and_diff_forms}. Therefore, by (1) we have
		$\omega_i \in \Omega_{X_Q}(\ol R)_Q \cap \Omega_{k(Y)} = \Omega_Y(B)_Q$ and
		$\res_Q(\omega_i) = 0$. Hence $\omega_i$ is holomorphic at $Q$ (it is a logarithmic form
		with vanishing residues). 
		It follows that:
		\[
		\tr_{X/X_Q}(\omega) = \sum_{i = 1}^r g_i^*(z_Q) \cdot \omega_i \in \Omega_{X_Q, Q}.
		\]
		Hence, using~\eqref{eqn:residue_and_trace}, $\res_{P_j}(\omega) = \res_{\ol{P}_j}(\tr_{X/X_Q}(\omega)) = 0$
		for every $j$. Therefore $\omega$ must be holomorphic.	
\end{proof}
\noindent For a future use, we prove now the following consequence of Lemma~\ref{lem:main_lemma_Omega_Y}~(2).
\begin{Corollary} \label{cor:main_lemma_OX}
	Keep assumptions of Theorem~\ref{thm:main_thm}. Then for any $Q \in Y(k)$, we have $z_Q^{\vee} := \tr_{X/X_Q}(z^{\vee}) \in \mc O_{X_Q, Q}$. Moreover, $z^{\vee} \in \mc O_X(V)$.
\end{Corollary}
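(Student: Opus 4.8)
The plan is to recognize $z_Q^\vee := \tr_{X/X_Q}(z^\vee)$ as the trace-dual of a normal basis generator of the intermediate extension $k(X_Q)/k(Y)$, and then to exploit that this extension is unramified over $Q$.

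First I would record the basic identities for $z_Q := \tr_{X/X_Q}(z)$ and $z_Q^\vee := \tr_{X/X_Q}(z^\vee)$. By assumption~\ref{enum:A} the subgroup $G_Q = G_{P,0}$ is normal in $G$, so $G/G_Q$ acts on $X_Q = X/G_Q$, and for every $g \in G$ one has $\tr_{X/X_Q}(g^*(z)) = \bar g^*(z_Q)$ and $\tr_{X/X_Q}(g^*(z^\vee)) = \bar g^*(z_Q^\vee)$, where $\bar g = g G_Q$ acts via the residual action on $k(X_Q)$. Using transitivity of the trace $\tr_{X/Y} = \tr_{X_Q/Y} \circ \tr_{X/X_Q}$, the projection formula, and the defining relation~\eqref{eqn:def_of_dual_elt} of $z^\vee$, one then checks
\[
\tr_{X_Q/Y}\!\left(\bar g_1^*(z_Q) \cdot \bar g_2^*(z_Q^\vee)\right) = \sum_{h \in g_2 G_Q} \tr_{X/Y}\!\left(g_1^*(z) \cdot h^*(z^\vee)\right) = \delta_{\bar g_1, \bar g_2}
\]
for all $\bar g_1, \bar g_2 \in G/G_Q$, so that $\{\bar g^*(z_Q)\}_{\bar g}$ and $\{\bar g^*(z_Q^\vee)\}_{\bar g}$ are trace-dual $k(Y)$-bases of $k(X_Q)$ (linear independence being forced by this identity).

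The second step is a discriminant computation. By Lemma~\ref{lem:zQ_regular} we have $z_Q \in \mc O_{X_Q,Q}$, hence $\bar g^*(z_Q) \in \mc O_{X_Q,Q}$ for every $\bar g$, so the Gram matrix $M := \left[\tr_{X_Q/Y}(\bar g_i^*(z_Q) \cdot \bar g_j^*(z_Q))\right]_{\bar g_i, \bar g_j}$ has entries in $\mc O_{Y,Q}$. Applying the Group Determinant Formula~\eqref{eqn:G-determinant} to the group $G/G_Q$, exactly as in the proof of Lemma~\ref{lem:properties_H0Q_H1Q}~(2) (with $X_Q/Y$ in place of $X/Y$, and using $\tr_{X_Q/Y}(z_Q) = \tr_{X/Y}(z)$), gives $\det M = \pm\, \tr_{X/Y}(z)^{2 \cdot \#(G/G_Q)} \in k^{\times}$. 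Hence $M$ is invertible over $\mc O_{Y,Q}$, and since the trace-dual basis is obtained from $\{\bar g_i^*(z_Q)\}$ by the matrix $M^{-1}$, each $\bar g_i^*(z_Q^\vee) = \sum_j (M^{-1})_{ij}\, \bar g_j^*(z_Q)$ lies in $\mc O_{X_Q,Q}$; taking $\bar g_i = \bar e$ yields $z_Q^\vee = \tr_{X/X_Q}(z^\vee) \in \mc O_{X_Q,Q}$, the first assertion. (Structurally: a family of integral elements with unit discriminant generates the maximal order over the DVR $\mc O_{Y,Q}$, and since the resulting local extension is then unramified its codifferent coincides with the maximal order, so the trace-dual family is again integral.)

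Finally, the ``moreover'' follows by specializing the first assertion to an arbitrary point $Q \in U = Y \setminus B$: there $G_Q$ is trivial, so $X_Q = X$ and $z_Q^\vee = z^\vee$, giving $z^\vee \in \mc O_{X,Q}$ for every $Q \notin B$, i.e. $z^\vee$ is regular along $V = \pi^{-1}(U)$, so $z^\vee \in \mc O_X(V)$. The one delicate point is the first step: one must check carefully that $\bar g^*$ descends to $k(X_Q)$ and that the various traces in the displayed identity compose as claimed, which is precisely where the normality assumption~\ref{enum:A} enters; after that the argument is formal linear algebra.
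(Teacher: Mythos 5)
Your argument is correct, but it takes a genuinely different route from the paper. The paper proves the corollary by duality: it uses the non-degenerate residue pairing~\eqref{eqn:local_duality_X} between $k(X)/\mc O_{X,Q}$ and $\Omega_{X,Q}$, and the fact (Lemma~\ref{lem:main_lemma_Omega_Y}~(2) together with~\eqref{eqn:gth_component_trace_f}) that $\sum_{g \in G_Q}\res_Q(\omega_g)=0$ for every $\omega \in \Omega_{X,Q}$, to conclude that $z_Q^{\vee}$ pairs to zero with all germs of holomorphic forms at $Q$ and hence is regular there. You instead argue by integrality at the level of the intermediate cover $X_Q \to Y$: using normality of $G_Q$ (assumption~\ref{enum:A}, via the same computation as~\eqref{eqn:tr_XQ_gz}), transitivity of the trace and~\eqref{eqn:def_of_dual_elt}, you show that $\{\bar g^*(z_Q)\}$ and $\{\bar g^*(z_Q^{\vee})\}$ are trace-dual $k(Y)$-bases of $k(X_Q)$; then Lemma~\ref{lem:zQ_regular} puts the Gram matrix of $\{\bar g^*(z_Q)\}$ in $\mc O_{Y,Q}$, and the Group Determinant Formula~\eqref{eqn:G-determinant} applied to $G/G_Q$ (exactly the manipulation of Lemma~\ref{lem:properties_H0Q_H1Q}~(2), using $\tr_{X_Q/Y}(z_Q)=\tr_{X/Y}(z)\in k^{\times}$ by~\eqref{eqn:trace_of_dual_z}) makes its determinant a unit, so the dual basis is an $\mc O_{Y,Q}$-combination of integral elements and in particular $z_Q^{\vee}\in\mc O_{X_Q,Q}$. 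This is a clean discriminant-style argument that avoids residues and Serre duality entirely and makes transparent why unramifiedness of $X_Q \to Y$ over $Q$ is the decisive point; the paper's proof is shorter given that Lemma~\ref{lem:main_lemma_Omega_Y} is already in place and it keeps the statement in the residue-pairing language used in the subsequent sections. Your deduction of the ``moreover'' part (take $Q\in U$, where $G_Q$ is trivial and $z_Q^{\vee}=z^{\vee}$) is exactly the paper's.
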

\begin{proof}
	Before the proof note that for every $Q \in Y(k)$ the pairing:
	\begin{align} 
		k(X)/\mc O_{X, Q} \times \Omega_{X, Q} &\to k, \label{eqn:local_duality_X}\\
		(f, \omega) &\mapsto \sum_{P \in \pi^{-1}(Q)} \res_P(f \cdot \omega) \nonumber
		= \res_Q(\tr_{X/Y}(f \cdot \omega))
	\end{align}
	(note that we used~\eqref{eqn:residue_and_trace} in the second equality) 
	is non-degenerate. Indeed, for every $P \in \pi^{-1}(Q)$ one can choose $t_P \in k(X)$ such that $\ord_P(t_P) = 1$ and $\ord_{P'}(t_P) = 0$ for $P' \in \pi^{-1}(Q)$, $P' \neq P$.
	Then $(t_P^{-n-1}, \omega) =  \res_P(t_P^{-n-1} \cdot \omega) \neq 0$
	for $n = \ord_P(\omega)$. Similarly one proves that this pairing
	is non-degenerate on the left (cf. the proof of non-degeneracy of~\eqref{eqn:local_duality}).\\
	
	It suffices to show that $z_Q^{\vee} \in \mc O_{X, Q}$. To this end note that for any $\omega \in \Omega_{X, Q}$:
	\begin{align*}
		\res_Q(\tr_{X/Y}(z^{\vee}_Q \cdot \omega))
		= \sum_{g \in G_Q} \res_Q(\tr_{X/Y}(g^*(z^{\vee}) \cdot \omega))
		= \sum_{g \in G_Q} \res_Q(\omega_g) = 0
	\end{align*}
	by Lemma~\ref{lem:main_lemma_Omega_Y}~(2) and by~\eqref{eqn:gth_component_trace_f}.
	Thus $z_Q^{\vee} \in \mc O_{X, Q}$ by the non-degeneracy of~\eqref{eqn:local_duality_X}. For the second part, note that
	for any $Q \in U$, $z^{\vee} = z^{\vee}_Q \in \mc O_{X, Q}$. Hence $z^{\vee} \in \mc O_X(V)$.
\end{proof}
The following exact sequence enables to construct various differential forms on $Y$ from ``local data''
(cf. \cite[III.7, p.~248]{Hartshorne1977}):
\begin{align} \label{eqn:constructing_diff_forms}
	0 \to \Omega_Y(Y) \to \Omega_{k(Y)} \to \bigoplus_{Q \in Y(k)} \frac{\Omega_{k(Y)}}{\Omega_{Y, Q}}
	&\to \, \, k \to 0,\\
	(\omega_Q)_{Q \in Y(k)} &\mapsto \sum_{Q \in Y(k)} \res_Q(\omega_Q). \nonumber
\end{align}
Fix a point $Q_0 \in B$. For any $Q \in B$, $Q \neq Q_0$, let $\eta_Q \in H^0(Y, \Omega_Y(Q_0+Q))$ be a fixed
differential form satisfying:
\[
\res_Q(\eta_Q) = 1, \quad \res_{Q_0}(\eta_Q) = -1
\]
(note that such a form exists by~\eqref{eqn:constructing_diff_forms}). Denote also $\eta_{Q_0} = 0$.\\
\subsection*{Proof of {Proposition~\ref{prop:log_diffs_and_diffs}}}
The proof is divided into three steps.
We abbreviate $\sum_{Q \in B} \sum_{g \in G}$ to $\sum_{Q, g}$.
\subsection*{Step I} The map $\res_G$ defines $k[G]$-linear homomorphisms:
\[
H^0(X, \Omega_X) \to I_{X/Y} \quad \textrm{ and } \quad H^0(X, \Omega_X(R)) \to k[G]_B.
\]
\begin{proof}[Proof of Step I]
	We check now that $\res_G$ defines a map $H^0(X, \Omega_X) \to I_{X/Y}$. Let $\omega \in H^0(X, \Omega_X)$.
	Then for any $Q \in B$, $\sum_{g \in G} \res_Q(\omega_g) g \in I_{G, G_Q}$ by Lemma~\ref{lem:main_lemma_Omega_Y}~(2). 
	Moreover:
	\[
	\sum_{Q, g} \res_Q(\omega_g) g = \sum_{g \in G} g \sum_{Q \in B} \res_Q(\omega_g) = 0
	\]
	by the residue theorem (cf.~\cite[Corollary after Theorem 3]{Tate_residues_differentials_curves}). It follows that the image of $H^0(X, \Omega_X)$ is contained in $I_{X/Y}$. The $k$-linearity is easy to check. The map in question
	is $G$-equivariant, since for any $h \in G$, the form $\omega \cdot h = h^*(\omega)$ maps to:
	\begin{align*}
		\sum_{Q, g} \res_Q((h^*\omega)_g) \cdot g_Q
		&= \sum_{Q, g} \res_Q(\omega_{gh^{-1}}) \cdot g_Q\\
		&=\sum_{Q, g} \res_Q(\omega_g) \cdot (g \cdot h)_Q\\
		&= \left( \sum_{Q, g} \res_Q(\omega_{g}) \cdot g_Q \right) \cdot h
	\end{align*}
	(here we used~\eqref{eqn:g_component_of_h}).
	One proves that $\res_G$ defines a map $H^0(X, \Omega_X(R)) \to k[G]_B$ by applying residue theorem in a similar manner.
\end{proof}
\subsection*{Step II}
The map:
\[
\bigoplus_{Q \in B} k[G] \to \Omega_{k(X)}, \quad
\sum_{Q, g} a_{Q, g} g_Q \mapsto \sum_{Q, g} a_{Q, g} g^*(z) \eta_Q
\]
induces $k[G]$-linear homomorphisms:
\[
I_{X/Y} \to H^0(X, \Omega_X) \quad \textrm{ and } \quad k[G]_B \to H^0(X, \Omega_X(R)).
\]
These maps provide sections of the homomorphisms $\res_G : H^0(X, \Omega_X) \to I_{X/Y}$ and
$\res_G : H^0(X, \Omega_X(R)) \to k[G]_B$ respectively.
\begin{proof}[Proof of Step II]
	One easily checks that the considered map is $k[G]$-linear.
	Let $\sum_{Q, g} a_{Q, g} g_Q \in I_{X/Y}$ and $\omega := \sum_{Q, g} a_{Q, g} g^*(z) \eta_Q$. Then by definition $\omega \in H^0(X, \Omega_X(R))$ and $\omega_h = \sum_Q a_{Q, h} \eta_Q$ for any $h \in G$. Hence for any $Q' \in B$ and $h \in G$:
	\begin{align*}
		\res_{Q'} (\omega_h) &=
		\begin{cases}
			a_{Q', h}, & \textrm{ if } Q' \neq Q_0,\\
			-\sum_{Q \neq Q_0} a_{Q, h}, & \textrm{ if } Q' = Q_0,
		\end{cases}\\
		&=
		\begin{cases}
			a_{Q', h}, & \textrm{ if } Q' \neq Q_0,\\
			a_{Q_0, h}, & \textrm{ if } Q' = Q_0,
		\end{cases}
	\end{align*}
	(the last equality follows by the fact that $\sum_{Q, g} a_{Q, g} g_Q \in k[G]_B$). Thus
	\[
		\sum_{g \in G} \res_{Q}(\omega_g) \cdot g = \sum_{g \in G} a_{Q, g} \cdot g \in I_{G, G_Q}
	\]
	 for any $Q \in B$ and $\omega \in \Omega_{X, Q}$ by Lemma~\ref{lem:main_lemma_Omega_Y}~(2). This means that $\omega \in H^0(X, \Omega_X)$.
	The above reasoning shows also that $\res_G(\omega) = \sum_{Q, g} a_{Q, g} g_Q$, i.e. this map is a section of $\res_G : H^0(X, \Omega_X) \to I_{X/Y}$.
	One checks that the map $k[G]_B \to H^0(X, \Omega_X(R))$
	is a well-defined section of $\res_G : H^0(X, \Omega_X(R)) \to k[G]_B$ in a similar manner.
\end{proof}
\subsection*{Step III}
Note that by Lemma~\ref{lem:main_lemma_Omega_Y}:
\[
H^0(X, \Omega_X) = \{ \omega \in H^0(X, \Omega_X(R)) : \res_G(\omega) \in I_{X/Y} \}.
\]
This can be rephrased by saying that the commutative diagram:
	\begin{center}
	% https://tikzcd.yichuanshen.de/#N4Igdg9gJgpgziAXAbVABwnAlgFyxMJZABgBpiBdUkANwEMAbAVxiRAAkA9YgCgA1SAAgA6wgPIBbGAHM6AfWB8A9AGsAvgEo1INaXSZc+QigCM5KrUYs2XXgJHipshcvU8ASho069IDNjwCIjITC3pmVkQQAEkXJQBNbV19AKMiM1DqcOsolWQAcQo5ACEdCxgoaXgiUAAzACcICSQAJmocCCQAZna6LAY2HD6B5JAGpqQyEA7WrKtIkFEcGAAPHHqJYHr4NTl8nzrG5sQzac7EHssItiXV9c3tuF390fHjqZmT3v7B4bK1IA
	\begin{tikzcd}
		{H^0(X, \Omega_{X})} \arrow[d, "{\textrm{res}_G}", two heads] \arrow[r, hook] & {H^0(X, \Omega_{X}(R))} \arrow[d, "{\textrm{res}_G}", two heads] \\
		I_{X/Y} \arrow[r, hook]                                             & {k[G]_B}                                             
	\end{tikzcd}
\end{center}
is cartesian. Therefore vertical arrows have the same kernels (see e.g. \cite[\href{https://stacks.math.columbia.edu/tag/08N3}{Lemma 08N3~(1)}]{stacks-project}). This ends the proof. \qed

\section{Cohomology of the structure sheaf} \label{sec:OX}
By Serre's duality, the dual of the map $\res_G : H^0(X, \Omega_X) \to I_{X/Y}$ may be identified with a map
\[
	J_{X/Y} \cong I_{X/Y}^{\vee} \to H^0(X, \Omega_X)^{\vee} \cong H^1(X, \mc O_X).
\]
Analogously one obtains a map $k[G]_B^{\vee} \to H^1(X, \mc O_X(-R))$. The goal of this section is to describe explicitly these two maps. This will be needed in the proof of the part of Theorem~\ref{thm:main_thm} concerning the de Rham cohomology. In the sequel we will identify $H^1(X, \mc O_X)$ and $H^1(X, \mc O_X(-R))$ with certain quotients
of $\bigoplus_B k(X)$ (this is explained in Lemma~\ref{lem:description_of_sheaf_cohomology} below). We treat $\bigoplus_B k(X)$ as a $k(X)$-module.
For any $Q \in B$ denote by $\delta(Q)$ the element of $\bigoplus_{B} k(X)$ with $1$ on the $Q$-th component and zero
on other components. The following is the main result of this section.
\begin{Proposition} \label{prop:resGvee}
	Keep assumptions of Theorem~\ref{thm:main_thm}. The map:
	\begin{equation} \label{eqn:map_kG_kX}
		\bigoplus_{Q \in B} k[G] \to \bigoplus_{Q \in B} k(X), \quad
		g_Q \to g^*(z^{\vee}) \cdot \delta(Q)
	\end{equation}
	induces maps:
	\[
	\res_G^{\vee} : J_{X/Y} \to H^1(X, \mc O_X), \quad \res_G^{\vee} : k[G]_B^{\vee} \to H^1(X, \mc O_X(-R))
	\]
	dual to the maps $\res_G$.
\end{Proposition}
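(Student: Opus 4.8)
The plan is to make precise the statement that the map~\eqref{eqn:map_kG_kX} is dual to $\res_G$ under Serre duality, by writing both the source and target of $\res_G^{\vee}$ as explicit \v{C}ech-type quotients of $\bigoplus_B k(X)$ and identifying the Serre pairing with residue pairings component by component. First I would recall (this is the promised Lemma~\ref{lem:description_of_sheaf_cohomology}) that for a coherent sheaf supported away from $U = Y \setminus B$ one has a description
\[
H^1(X, \mc O_X) \cong \left( \bigoplus_{Q \in B} k(X) \right) \Big/ \left( H^0(V, \mc O_X) + \bigoplus_{Q \in B} \mc O_{X, Q} \right),
\]
and similarly $H^1(X, \mc O_X(-R))$ with $\mc O_{X, Q}$ replaced by $\mc O_X(-R)_Q$ and $H^0(V, \mc O_X)$ replaced by $H^0(V, \mc O_X(-R)) = H^0(V, \mc O_X)$; the $k[G]$-action is the natural one coming from the action on $k(X)$ and the permutation of the fibre over each $Q$ (which is trivial here since $G_Q$ is normal, so $G$ fixes each $Q \in B$ but permutes the points of $\pi^{-1}(Q)$). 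Dually, $H^0(X, \Omega_X)$ and $H^0(X, \Omega_X(R))$ sit inside $\Omega_{k(X)}$, and Serre duality for $X$ is induced by the global residue pairing $(f, \omega) \mapsto \sum_{P \in X(k)} \res_P(f\omega)$, which for a class represented by $\sum_{Q \in B} f_Q \delta(Q)$ and a global (log-)differential $\omega$ reads $\sum_{Q \in B} \sum_{P \in \pi^{-1}(Q)} \res_P(f_Q \omega)$.

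The heart of the argument is then a local residue computation. Given $g \in G$ and $Q \in B$, I send $g_Q$ to $g^*(z^{\vee}) \cdot \delta(Q)$ and must check that pairing this against an arbitrary $\omega \in H^0(X, \Omega_X(R))$ reproduces the value of the functional $\res_G(\omega) = \sum_{Q, g} \res_Q(\omega_g) g_Q$ on $g_Q$ under the standard pairing between $k[G]_B$ and $k[G]_B^{\vee}$ (respectively between $I_{X/Y}$ and $J_{X/Y}$). Concretely, using~\eqref{eqn:residue_and_trace} and then~\eqref{eqn:gth_component_trace_f},
\[
\sum_{P \in \pi^{-1}(Q)} \res_P(g^*(z^{\vee}) \cdot \omega) = \res_Q\big(\tr_{X/Y}(g^*(z^{\vee}) \omega)\big) = \res_Q(\omega_g),
\]
which is exactly the $g$-coefficient of $\res_G(\omega)$ at $Q$. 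Thus the map~\eqref{eqn:map_kG_kX} is a lift to $\bigoplus_B k(X)$ of the dual of $\res_G$, provided it is well-defined on the quotients. For well-definedness I need: $g^*(z^{\vee}) \delta(Q)$, summed appropriately, descends to $H^1(X, \mc O_X(-R))$ — which follows from Corollary~\ref{cor:main_lemma_OX} giving $z^{\vee} \in \mc O_X(V)$, so the local representatives have no poles along $V$ and the global-section ambiguity $H^0(V, \mc O_X)$ is respected — and that the restriction to $J_{X/Y} \subset k[G]_B^{\vee}$ lands in $H^1(X, \mc O_X) \subset H^1(X, \mc O_X(-R))$, which is forced by duality once the pairing statement is established, since $J_{X/Y}$ is by definition the annihilator of $\ker(\res_G : H^0(X,\Omega_X) \to I_{X/Y})$ under the refined pairing. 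Alternatively, and perhaps more cleanly, I can deduce everything from Step III of the previous section: dualizing the cartesian square there and applying Serre duality term by term gives a cocartesian square expressing $H^1(X, \mc O_X)$ as the pushout, which pins down $\res_G^{\vee}$ on $J_{X/Y}$ once it is known on $k[G]_B^{\vee}$.

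I expect the main obstacle to be bookkeeping rather than conceptual: one must be careful that the \v{C}ech description of $H^1$ and the submodule description of $H^0(X,\Omega_X(R))$ are set up so that Serre duality is \emph{literally} the sum-of-local-residues pairing with no sign or normalization discrepancy, and that the relative augmentation ideals $I_{G,G_Q}$ and their duals $J_{G,G_Q}$ are paired in a way compatible with the decomposition $\sum_P \res_P = \res_Q \circ \tr_{X/Y}$ over the fibre. In particular I should verify that the element $g^*(z^{\vee}) \delta(Q)$ only depends on the coset $g G_Q$ modulo the relevant local subspace when we restrict to $J_{G,G_Q} = \Ind^G_{G_Q} J_{G_Q}$, which again reduces to~\eqref{eqn:def_of_dual_elt} and the computation in Lemma~\ref{lem:main_lemma_Omega_Y}. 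Once these compatibilities are checked, the proposition follows formally, and I would present the proof as: (i) recall the \v{C}ech description; (ii) the local residue identity above; (iii) well-definedness via Corollary~\ref{cor:main_lemma_OX}; (iv) conclude duality by comparing pairings, or equivalently by dualizing the cartesian diagram of Step III.
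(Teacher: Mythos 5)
Your proposal is correct and follows essentially the same route as the paper: identify $H^1(X,\mc O_X)$ and $H^1(X,\mc O_X(-R))$ as quotients of $\bigoplus_{Q \in B} k(X)$ via Lemma~\ref{lem:description_of_sheaf_cohomology}, realize Serre duality as the residue pairing~\eqref{eqn:duality_pairing_formula}, and reduce everything to the computation $\sum_{P\in\pi^{-1}(Q)}\res_P(g^*(z^{\vee})\omega)=\res_Q(\tr_{X/Y}(g^*(z^{\vee})\omega))=\res_Q(\omega_g)$, which is exactly the paper's proof. One slip that does not affect the argument: $J_{X/Y}$ is a quotient of $k[G]_B^{\vee}$ and $H^1(X,\mc O_X)$ is a quotient of $H^1(X,\mc O_X(-R))$ (not submodules), so the first map is not obtained by ``restriction'' from the second; but your alternative justifications (non-degeneracy of the Serre pairing, or the direct check via Corollary~\ref{cor:main_lemma_OX} that coset sums $\sum_{g\in g_0 G_Q} g_Q$ and diagonal elements map to coboundaries) do establish the required factorizations.
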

In order to express $H^1(X, \mc O_X)$ and $H^1(X, \mc O_X(-R))$ as quotients of $\bigoplus_B k(X)$, we will use an alternative description of sheaf cohomology of sheaves on a curve. Basically, it is a variation of \v{C}ech cohomology for a cover consisting of an open set $U$ and ``infinitesimal neighbourhoods'' of points $Q \not \in U(k)$.
\begin{Lemma} \label{lem:description_of_sheaf_cohomology}
	Let $Y$ be a smooth projective curve with the generic point~$\eta$ over an algebraically closed field $k$. Let $S \subset Y(k)$ be a finite non-empty set. Denote $U := Y \setminus S$. Then for any locally free sheaf $\mc F$ of finite rank on $Y$ we have a natural isomorphism:
	\begin{align*}
		H^1(Y, \mc F) &\cong \coker(\mc F(U) \to \bigoplus_{Q \in S} \mc F_{\eta}/\mc F_{Q}).
	\end{align*}
\end{Lemma}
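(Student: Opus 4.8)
The plan is to construct the isomorphism via the standard short exact sequence relating a sheaf, its restriction to the open set $U$, and its skyscraper quotients at the points of $S$, then compute the long exact sequence of cohomology. First I would consider the exact sequence of sheaves on $Y$
\[
0 \to \mc F \to j_* (\mc F|_U) \oplus \bigoplus_{Q \in S} i_{Q,*}(\mc F_Q) \to \bigoplus_{Q \in S} i_{Q,*}(\mc F_\eta) \to 0,
\]
where $j : U \hookrightarrow Y$ is the open immersion and $i_Q : \Spec \mc O_{Y,Q} \to Y$; here I use that $\mc F_\eta = \mc F|_U \otimes k(Y)$ is the stalk at the generic point and the map on the right is the difference of the two localization maps. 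Exactness on stalks is a local check: at a point of $U$ it is trivial, and at $Q \in S$ it amounts to the fact that $\mc O_{Y,Q}$ is the equalizer of $k(Y)$ and the semilocal ring of $U$-points specializing near $Q$, i.e. that $\mc F_Q = \mc F(U) \cap \mc F_\eta$ inside $\mc F_\eta$ (valuation-theoretic, using that $Y$ is a smooth curve so $\mc O_{Y,Q}$ is a DVR).

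Next I would take the long exact sequence in cohomology. Since $j_*(\mc F|_U)$ is a quasi-coherent sheaf supported with affine restriction --- more precisely, because $U$ is affine (a nonempty open in a curve with $S \neq \varnothing$) --- we have $H^i(Y, j_*(\mc F|_U)) = H^i(U, \mc F|_U) = 0$ for $i \ge 1$, and the skyscraper sheaves $i_{Q,*}(\mc F_Q)$, $i_{Q,*}(\mc F_\eta)$ have vanishing higher cohomology as well (they are flasque, or simply supported at a point). Therefore the long exact sequence reads
\[
0 \to H^0(Y, \mc F) \to \mc F(U) \oplus \bigoplus_{Q \in S} \mc F_Q \to \bigoplus_{Q \in S} \mc F_\eta \to H^1(Y, \mc F) \to 0,
\]
and the connecting map identifies $H^1(Y, \mc F)$ with the cokernel of the middle map. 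Finally I would simplify this cokernel: the image of $\bigoplus_Q \mc F_Q$ lands inside $\bigoplus_Q \mc F_\eta$, and quotienting first by that submodule replaces the target by $\bigoplus_Q \mc F_\eta/\mc F_Q$, leaving precisely $\coker\bigl(\mc F(U) \to \bigoplus_{Q \in S} \mc F_\eta/\mc F_Q\bigr)$, where $\mc F(U)$ now maps diagonally through its localizations at each $Q$. Naturality in $\mc F$ is clear since every construction is functorial.

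The main obstacle --- really the only nontrivial point --- is verifying the exactness of the three-term sequence of sheaves at the stalks over $Q \in S$, i.e. the identity $\mc F_Q = \mc F(U) \cap \mc F_\eta$ and the surjectivity onto $\mc F_\eta$. The surjectivity is easy (any rational section is regular on a punctured neighbourhood), but the intersection claim is where one uses that $Y$ is a smooth curve: a rational section of a locally free sheaf that is regular on all of $U$ and regular at $Q$ is genuinely regular on $U \cup \{Q\}$, which combined over all $Q \in S$ and with $\mc F(U)$ recovers $\mc F(Y)$; this is the curve-theoretic statement that $\mc F$ is determined by its sections away from finitely many points together with its stalks there. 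I would phrase this using that each $\mc O_{Y,Q}$ is a DVR and reduce to the rank-one (line bundle) case by trivializing $\mc F$ locally, then to $\mc F = \mc O_Y$ where it is the elementary fact $\mc O_{Y,Q} = \{ f \in k(Y) : \ord_Q(f) \ge 0 \}$.
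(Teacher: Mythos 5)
Your argument is correct and essentially reproduces the paper's proof: the paper just collapses your Mayer--Vietoris-style resolution into the single short exact sequence $0 \to \mc F \to j_*(\mc F|_U) \to \bigoplus_{Q \in S} i_{Q,*}(\mc F_\eta/\mc F_Q) \to 0$ and, exactly as you do, uses affineness of $U$ (hence of $j$) to kill $H^1(Y, j_*(\mc F|_U))$ before taking the long exact sequence. The only blemishes are inessential: $i_{Q,*}(\mc F_Q)$ and $i_{Q,*}(\mc F_\eta)$ are not skyscrapers (their higher cohomology vanishes because $i_Q$ is affine, or one may replace them by genuine skyscrapers at $Q$, which works equally well), and the identity ``$\mc F_Q = \mc F(U) \cap \mc F_\eta$'' is not the right formulation --- stalkwise exactness at $Q \in S$ only needs that the stalk of $j_*(\mc F|_U)$ at $Q$ is $\mc F_\eta$ together with the injectivity of $\mc F_Q \to \mc F_\eta$.
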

\begin{proof}
	Let $j : U \hookrightarrow Y$ be the open immersion.
	For any $Q \in Y(k)$:
	\begin{align*}
		j_*(\mc F|_U)_Q = \varinjlim_{V \ni Q} \mc F (U \cap V) =
		\begin{cases}
			\mc F_Q, & \textrm{ if } Q \in U,\\
			\mc F_{\eta}, & \textrm{ otherwise, }
		\end{cases}
	\end{align*}
	(note that if $Q \in U(k)$, $U \cap V$ ranges over all open subsets of $U$ containing $Q$;
	otherwise $U \cap V$ ranges over all open subsets of $U$).
	This yields the exact sequence:
	\begin{equation} \label{eqn:F_to_FU_exact_sequence}
		0 \to \mc F \to j_*(\mc F|_U) \to \bigoplus_{Q \in S} i_{Q, *}(\mc F_{\eta}/\mc F_Q) \to 0,
	\end{equation}
	where $i_Q : \Spec(\mc O_{Y, Q}) \to Y$ is the natural morphism.
	Note that $U$ is an affine open subscheme by \cite[Ex. IV.1.3]{Hartshorne1977}.
	In particular, $j$ is an affine morphism. Thus, using \cite[Ex. III.4.1]{Hartshorne1977} and Serre's criterion on affineness (cf.~\cite[Theorem~III.3.7]{Hartshorne1977})
	we obtain $H^1(Y, j_*(\mc F|_U)) = H^1(U, \mc F|_U) = 0$.
	Therefore the proof follows by taking the associated long exact sequence of~\eqref{eqn:F_to_FU_exact_sequence}.
\end{proof}
Note that $H^1(X, \mc O_X) \cong H^1(Y, \pi_*\mc O_X)$ and $H^1(X, \mc O_X(-R)) \cong H^1(Y, \pi_*\mc O_X(-R))$. Therefore those two cohomology groups
might be considered as quotients of $\bigoplus_B k(X)$. By abuse of notation,
we often denote the element of the cohomology
determined by an element of $\bigoplus_B k(X)$ by the same letter.
For any $\nu \in \bigoplus_B k(X)$ and $Q \in B$ let $\nu_Q \in k(X)$ denote the $Q$-th coordinate of $\nu$. In this context, for any $D \in \Divv(X)$ the duality pairing
\[
\langle \cdot, \cdot \rangle : H^0(X, \Omega_X(D)) \times H^1(X, \mc O_X(-D)) \to k
\]
is given by the formula:
\begin{equation} \label{eqn:duality_pairing_formula}
	\langle \omega, \nu \rangle := \sum_{Q \in S} \res_Q(\tr_{X/Y}(\nu_Q \cdot \omega))
\end{equation}
(this can be proven using~\eqref{eqn:residue_and_trace}; see e.g. \cite[p.~158-159]{Tate_residues_differentials_curves} for similar formulas for the Serre's duality). In the sequel we identify $J_{G, H}$ with:
\[
J_{G, H} = k[G]\bigg/\left(\sum_{g \in G} a_g \cdot g : a_{g_1} = a_{g_2} \textrm{ if } g_1 H = g_2 H \right).
\]
Let $k[G]_B^{\vee}$ be the dual of the module $k[G]_B$. Note that $k[G]_B^{\vee}$ may be identified with the module:
\[
\coker \left(\textrm{diag} : k[G] \to \bigoplus_B k[G] \right).
\]
and that $k[G]_B^{\vee} \cong k[G]^{\# B - 1}$. For any $g \in G$ and $Q \in B$, denote the image of $g_Q$ in $J_{X/Y}$ and in $k[G]_B^{\vee}$ by $\ol{g}_Q$.
\begin{proof}[Proof of Proposition~\ref{prop:resGvee}]
	Again, we abbreviate $\sum_{Q \in B} \sum_{g \in G}$ to $\sum_{Q, g}$. Note that the duality pairing between $I_{X/Y}$ and $J_{X/Y}$ is given by:
	\begin{equation*}
		\left\langle \sum_{Q, g} a_{Q, g} g_Q, \sum_{Q, g} b_{Q, g} \ol g_Q  \right\rangle = \sum_{Q, g} a_{Q, g} b_{Q, g}.
	\end{equation*}
	For any $\omega \in H^0(X, \Omega_X)$, $\sum_{Q, g} b_{Q, g} g_Q \in \bigoplus_B k[G]$,
	using the duality~\eqref{eqn:duality_pairing_formula} and~\eqref{eqn:gth_component_trace_f}:
	\begin{align*}
		\left\langle \omega, \res_G^{\vee}\left(\sum_{Q, g} b_{Q, g} g_Q \right) \right\rangle
		&= \left\langle \omega, \sum_{Q, g} b_{Q, g} g^*(z^{\vee}) \delta(Q) \right\rangle \\
		&= \sum_{Q, g} b_{Q, g} \res_Q(\tr_{X/Y}(g^*(z^{\vee}) \cdot \omega)) \\
		&=  \sum_{Q, g} b_{Q, g} \res_Q(\omega_g) \\
		&= \left\langle \sum_{Q, g} \res_Q(\omega_g) g_Q, \sum_{Q, g} b_{Q, g} \ol g_Q \right\rangle \\
		&= \left\langle \res_G(\omega), \sum_{Q, g} b_{Q, g} \ol g_Q \right\rangle.
	\end{align*}
	This shows that the map~\eqref{eqn:map_kG_kX} induces a map $J_{X/Y} \to H^1(X, \mc O_X)$,
	which is dual to $\res_G : H^0(X, \Omega_X) \to I_{X/Y}$. The proof for
	$\res_G^{\vee} : k[G]_B^{\vee} \to H^1(X, \mc O_X(-R))$ is analogous.
	\end{proof}
\begin{Remark}
Note since $\res_G$ are split surjections, $\res_G^{\vee}$ are split injections by duality. Moreover, the diagram:
\begin{center}
	% https://tikzcd.yichuanshen.de/#N4Igdg9gJgpgziAXAbVABwnAlgFyxMJZABgBpiBdUkANwEMAbAVxiRAAkA9ARgAoANUgAIAOiIC2AYyEB5APr9eAWgBKASjUgAvqXSZc+Qim7kqtRizZc+g0ROnz+mnXux4CRMtzP1mrRCAA1sgA4hRyAELauiAYboZEJt7UvpYBAFJywPwA9ACaWtpmMFAA5vBEoABmAE4Q4khkIDgQSCYgABYwdFBsOADuEF09CC4gtfVIAEzULY2zdFgMbB0QEIHR1XUNiADMs62I7TiLywGr65vj29MHSPud3b0BA0NPoxRaQA
	\begin{tikzcd}
	{H^1(X, \mc O_X(-R))} \arrow[r, two heads]    & {H^1(X, \mc O_X)}       \\
	{k[G]_B} \arrow[u, hook] \arrow[r, two heads] & J_{X/Y} \arrow[u, hook]
	\end{tikzcd}
\end{center}
is cocartesian.
\end{Remark}

\section{The de Rham cohomology} \label{sec:dR}
In this section we prove the part of Theorem~\ref{thm:main_thm} concerning the de Rham cohomology.
The idea is similar as in the case of $H^0(X, \Omega_X)$. First we compare the de Rham cohomology to the logarithmic de Rham cohomology with poles in the ramification locus. Then we compare the logarithmic de Rham cohomology
to its variant, which we call the ``quasilogarithmic de Rham cohomology''. Finally, we decompose
the quasilogarithmic de Rham cohomology into local and global parts.\\

Let $Y$ and $S$ be as in Lemma~\ref{lem:description_of_sheaf_cohomology}. Recall that $H^1_{dR, log}(Y, S)$, the logarithmic
cohomology of $Y$ with poles in $S$, is defined as the hypercohomology of the complex:
\begin{equation*}
	\Omega_Y^{\bullet}(\log S) : \quad (\mc O_Y \stackrel{d}{\longrightarrow} \Omega_Y(S)).
\end{equation*}
We define the quasilogarithmic de Rham cohomology with poles in $S$, $H^1_{dR, qlog}(Y, S)$,
as the hypercohomology of the complex:
\begin{equation*}
	\Omega_Y^{\bullet}(\qlog S) : \quad (\mc O_Y(-S) \stackrel{d}{\longrightarrow} \Omega_Y(S)).
\end{equation*}
Note that the spectral sequences of hypercohomology for $\Omega_Y^{\bullet}(\log S)$ and
$\Omega_Y^{\bullet}(\qlog S)$ degenerate on the first page for trivial reasons (since $d : k = H^0(Y, \mc O_Y) \to H^0(Y, \Omega_Y(S))$ vanishes and $H^1(Y, \Omega_Y(S)) = H^0(Y, \mc O_Y(-S)) = 0$).
This yields ``Hodge--de Rham exact sequences'' for $H^1_{dR, log}(Y, S)$ and $H^1_{dR, qlog}(Y, S)$:
\begin{alignat}{5}
	0 \to& H^0(Y, \Omega_Y(S)) &\to H^1_{dR, log}(Y, S) \to& \quad H^1(Y, \mc O_Y) &\to 0, \label{eqn:hdr_exact_sequence_for_log_de_rham} \\
	0 \to& H^0(Y, \Omega_Y(S)) &\to H^1_{dR, qlog}(Y, S) \to& H^1(Y, \mc O_Y(-S)) &\to 0. \label{eqn:hdr_exact_sequence_for_qlog_de_rham}
\end{alignat}
The long exact sequence associated with the exact sequence:
\[
0 \to \Omega_Y^{\bullet}(\qlog S) \to \Omega_Y^{\bullet}(\log S) \to \mc O_Y/\mc O_Y(-S) \to 0
\]
(where we treat $\mc O_Y/\mc O_Y(-S)$ as a complex concentrated in degree $0$)
provides a natural injection $H^1_{dR}(Y) \to H^1_{dR, log}(Y, S)$. Similarly, there is a natural
surjection $H^1_{dR, qlog}(Y, S) \to H^1_{dR, log}(Y, S)$.\\

The following result is an analogue of Propositions~\ref{prop:log_diffs_and_diffs} and~\ref{prop:resGvee}.
\begin{Proposition} \label{prop:extending_resG}
	Keep assumptions of Theorem~\ref{thm:main_thm}.
	\begin{enumerate}[(1)]
		\item The maps $\res_G : H^0(X, \Omega_X) \to I_{X/Y}$ and $\res_G : H^0(X, \Omega_X(R)) \to k[G]_B$ extend to split $k[G]$-surjections:
		\[
		\res_G : H^1_{dR}(X) \to I_{X/Y} \quad \textrm{ and } \quad \res_G : H^1_{dR, log}(X, R) \to k[G]_B
		\]
		with isomorphic kernels.
		
		\item The maps $\res_G^{\vee} : J_{X/Y} \to H^1(X, \mc O_X)$ and $\res_G^{\vee} : k[G]_B^{\vee} \to H^1(X, \mc O_X(-R))$ lift to split $k[G]$-injections:
		\[
		\res_G^{\vee} : J_{X/Y} \to H^1_{dR, log}(X, R) \quad \textrm{ and } \quad \res_G^{\vee} : k[G]_B^{\vee} \to H^1_{dR, qlog}(X, R)
		\]
		with isomorphic cokernels.
	\end{enumerate}
\end{Proposition}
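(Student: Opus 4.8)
The plan is to mimic the arguments of Sections~\ref{sec:OmegaX} and~\ref{sec:OX}, replacing sheaves by complexes of sheaves. For part~(1), I would first check that the map $\res_G$ extends to the logarithmic de Rham cohomology: a class in $H^1_{dR, log}(X, R)$ can be represented (via the Hodge--de Rham sequence~\eqref{eqn:hdr_exact_sequence_for_log_de_rham} for $X$, together with $R = \pi^{-1}(B)$) by data involving logarithmic differential forms, and one applies the already-constructed $\res_G$ to the differential-form part. The key point is that this is well-defined on hypercohomology: exact forms $df$ with $f \in \mc O_X$ map to zero under $\res_G$ because $\res_Q((df)_g) = \res_Q(\tr_{X/Y}(g^*(z^{\vee})\,df))$ and $\tr_{X/Y}(g^*(z^{\vee})\,df)$ is of the form $d(\text{something in }k(Y))$ plus a holomorphic correction after using~\eqref{eqn:trace_and_diff_forms}, so its residue vanishes. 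Then I would show $\res_G$ lands in $I_{X/Y}$ (resp. $k[G]_B$) by the same residue-theorem argument as in Step~I of the proof of Proposition~\ref{prop:log_diffs_and_diffs}, now applied to the differential-form component of a de Rham class. Splitness would follow by constructing a section just as in Step~II: send $\sum_{Q,g} a_{Q,g} g_Q$ to the de Rham class of $\sum_{Q,g} a_{Q,g} g^*(z)\eta_Q$, which is a closed logarithmic form, hence defines a class in $H^1_{dR, log}(X, R)$, and one checks it actually lies in $H^1_{dR}(X)$ when the coefficients lie in $I_{X/Y}$ by Lemma~\ref{lem:main_lemma_Omega_Y}~(2) exactly as before.

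The statement that the two maps have isomorphic kernels is the analogue of the cartesian-diagram argument in Step~III. Here I would use the commutative square
\[
\begin{tikzcd}
H^1_{dR}(X) \arrow[d, "\res_G"', two heads] \arrow[r, hook] & H^1_{dR, log}(X, R) \arrow[d, "\res_G", two heads] \\
I_{X/Y} \arrow[r, hook] & k[G]_B
\end{tikzcd}
\]
and argue it is cartesian: a class in $H^1_{dR, log}(X, R)$ lies in the image of $H^1_{dR}(X)$ if and only if it can be represented with holomorphic (non-logarithmic) forms, and the obstruction to this is measured exactly by the residues $\res_Q(\omega_g)$ via the boundary map of $0 \to \Omega_X^\bullet \to \Omega_X^\bullet(\log R) \to (\text{torsion}) \to 0$; Lemma~\ref{lem:main_lemma_Omega_Y}~(2) identifies that obstruction with the condition $\res_G(\text{class}) \in I_{X/Y}$. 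Once the square is cartesian, \cite[\href{https://stacks.math.columbia.edu/tag/08N3}{Lemma 08N3}]{stacks-project} gives the isomorphism of kernels. Part~(2) would be handled dually: either by running the same argument for the quasilogarithmic complex $\Omega_Y^\bullet(\qlog S)$ in place of $\Omega_Y^\bullet(\log S)$, using $z^{\vee}$ and Corollary~\ref{cor:main_lemma_OX} (which guarantees $z^{\vee} \in \mc O_X(V)$, so that $g^*(z^{\vee})\delta(Q)$ gives a well-defined quasilogarithmic class), or more efficiently by invoking Serre/Poincaré duality between $H^1_{dR, log}(X, R)$ and $H^1_{dR, qlog}(X, R)$ and dualizing part~(1); the cokernel statement for $\res_G^{\vee}$ is then dual to the kernel statement for $\res_G$.

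The main obstacle I anticipate is the careful bookkeeping needed to make ``$\res_G$ on a de Rham class'' unambiguous and manifestly $k[G]$-linear — i.e. verifying that it descends from the level of \v{C}ech--hypercohomology cocycles to cohomology classes, independent of the representing cocycle, and that exact terms and the gluing data contribute nothing to the residues. This is where the identities~\eqref{eqn:residue_and_trace}, \eqref{eqn:trace_and_diff_forms} and~\eqref{eqn:gth_component_trace_f} must be combined delicately, much as in the proof of Lemma~\ref{lem:main_lemma_Omega_Y}. The second, more conceptual, subtlety is establishing that the square above is genuinely cartesian rather than merely commutative with equal kernels: one must verify surjectivity of the induced map from $H^1_{dR}(X)$ onto the fibre product, which amounts to showing that any logarithmic de Rham class whose residues satisfy the $I_{X/Y}$-conditions can be modified (by subtracting a suitable exact logarithmic form, or equivalently choosing a better representative) to a genuine de Rham class — this is essentially a restatement of Lemma~\ref{lem:main_lemma_Omega_Y}~(2) at the level of hypercohomology, and making that precise is the technical heart of the proof. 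Everything else (the explicit sections, the residue-theorem computation placing the image in $I_{X/Y}$ or $k[G]_B$, the passage to the dual) is a routine transcription of Sections~\ref{sec:OmegaX} and~\ref{sec:OX}.
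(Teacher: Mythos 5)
Your construction of the extended map $\res_G$ does not go through as described, and the missing ingredient is precisely the technical heart of the paper's proof: the decomposition of $dz^{\vee}$ into local components $(\xi_Q)_{Q \in B}$ (Lemma~\ref{lem:xiQ_existence}), i.e.\ forms with $\xi_Q \equiv dz^{\vee} \pmod{\Omega_{X,Q}}$, $\xi_Q$ regular away from $Q$, $\sum_{g \in g_0 G_Q} g^*(\xi_Q)=0$ and $\sum_{Q \in B}\xi_Q = dz^{\vee}$. In the cocycle description of $H^1_{dR}(X)$ and $H^1_{dR,log}(X,R)$, a coboundary changes the form part by $df$ with $f \in \mc O_X(V)$ (and the local data by elements of $\mc O_{X,Q}$), and the quantity $\res_Q((df)_g)=\res_Q(\tr_{X/Y}(g^*(z^{\vee})\,df))=\sum_{P \in \pi^{-1}(Q)}\res_P(g^*(z^{\vee})\,df)$ does \emph{not} vanish in general: $z^{\vee}$ has poles along $R$, and neither \eqref{eqn:trace_and_diff_forms} nor \eqref{eqn:residue_and_trace} makes $\tr_{X/Y}(g^*(z^{\vee})\,df)$ ``exact plus holomorphic''; only the sum over all $Q \in B$ vanishes, by the residue theorem. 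So ``apply the already-constructed $\res_G$ to the differential-form part'' is not well defined on classes, and your stated justification is false. The paper instead defines $\res_G$ on cocycles by $(\omega,\nu)\mapsto\sum_{Q,g}\bigl(\res_Q((\omega-d\nu_Q)_g)-\langle g^*(\xi_Q),\nu\rangle\bigr)g_Q$; the correction term $\langle g^*(\xi_Q),\nu\rangle$ is exactly what cancels the nonzero local residues coming from coboundaries, and property (3) of the $\xi_Q$ is what forces the image into $I_{X/Y}$ (together with Lemma~\ref{lem:main_lemma_Omega_Y}). Without Lemma~\ref{lem:xiQ_existence}, whose proof is a genuinely nontrivial construction, part (1) is not established.

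The same gap reappears in part (2): the naive lift of $\res_G^{\vee}(\ol g_Q)$ would be $(dg^*(z^{\vee}), g^*(z^{\vee})\delta(Q))$, which is not a cocycle because $dz^{\vee}$ fails to lie in $\Omega_X(R)_{Q'}$ at the other branch points $Q'\neq Q$ (Corollary~\ref{cor:main_lemma_OX} only gives $z^{\vee}\in\mc O_X(V)$, so $dz^{\vee}$ has non-logarithmic poles along $R$); the paper uses $\Xi_Q=(\xi_Q, z^{\vee}\delta(Q))$ and verifies the relations defining $J_{X/Y}$ and $k[G]_B^{\vee}$ from properties (3) and (4) of the $\xi_Q$. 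Your alternative of dualizing (1) via a duality between $H^1_{dR,log}$ and $H^1_{dR,qlog}$ is not set up in the paper and in any case presupposes (1). One further remark: for the kernel/cokernel comparison the paper does not carry out the representative-modification argument you flag as the hard point; it deduces that the de Rham square is cartesian purely formally from the cartesian square at the $H^0$-level (Step III of Proposition~\ref{prop:log_diffs_and_diffs}) and the fact that both horizontal inclusions have cokernel $H^1(X,\mc O_X)$ by the Hodge--de Rham sequences, via the abstract Lemma~\ref{lem:cartesian}; adopting that route would spare you the hypercohomology-level surjectivity argument, but it does not repair the main gap above.
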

\noindent 
We give now a description of sheaf hypercohomology, that generalizes Lemma~\ref{lem:description_of_sheaf_cohomology}.
\begin{Lemma} %!S
	Keep the setup of Lemma~\ref{lem:description_of_sheaf_cohomology}. Let $\mc F^{\bullet} = (\mc F^0 \stackrel{d}{\rightarrow} \mc F^1)$
	be a cochain complex of locally free $\mc O_Y$-modules of finite rank with a $k$-linear differential.
	Then we have a natural isomorphism:
	\begin{align*}
		\HH^1(Y, \mc F^{\bullet}) &\cong Z^1_S(\mc F^{\bullet})/B^1_S(\mc F^{\bullet}),
	\end{align*}
	where:
	\begin{align*}
		Z^1_{S}(\mc F^{\bullet}) &:= \{ (\omega, (h_Q)_{Q \in S} ) : \omega \in \mc F^1(U), 
		h_Q \in \mc F^0_{\eta}, \, \omega - d h_Q \in \mc F^1_Q \},\\
		B^1_{S}(\mc F^{\bullet}) &:= \{ (dh, (h + h_Q)_{Q \in S} ) : h \in \mc F^0(U), \,
		h_Q \in \mc F^0_Q \}.
	\end{align*}
	Moreover, the natural maps
	\begin{align*}
		H^0(Y, \mc F^1) \to \HH^1(Y, \mc F^{\bullet}) \quad \textrm{ and } \quad 
		\HH^1(Y, \mc F^{\bullet}) \to H^1(Y, \mc F^0)
	\end{align*}
	are induced by the maps:
	\[
	\begin{array}{ccc}
		H^0(Y, \mc F^1) &\to& Z^1_S(\mc F^{\bullet})\\
		\omega &\mapsto& (\omega, (0)_{Q \in S})
	\end{array}
	\qquad \textrm{ and } \qquad
	\begin{array}{ccc}
		Z^1_S(\mc F^{\bullet}) &\to& \bigoplus_{Q \in S} \mc F^0_{\eta}\\
		(\omega, (h_Q)_{Q \in S} ) &\mapsto& (h_Q)_{Q \in S}
	\end{array}
	\]
	respectively.
\end{Lemma}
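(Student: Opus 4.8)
The plan is to mimic the proof of Lemma~\ref{lem:description_of_sheaf_cohomology}, replacing sheaf cohomology by hypercohomology of the two-term complex $\mc F^{\bullet}$. First I would recall the \v{C}ech-style resolution: with $j : U \hookrightarrow Y$ the open immersion, for each term $\mc F^i$ the computation of stalks of $j_*(\mc F^i|_U)$ carried out in the proof of Lemma~\ref{lem:description_of_sheaf_cohomology} gives short exact sequences of complexes
\begin{equation*}
0 \to \mc F^{\bullet} \to j_*(\mc F^{\bullet}|_U) \to \bigoplus_{Q \in S} i_{Q, *}\big(\mc F^{\bullet}_{\eta}/\mc F^{\bullet}_Q\big) \to 0,
\end{equation*}
where $\mc F^{\bullet}_{\eta}/\mc F^{\bullet}_Q$ denotes the two-term complex of quotients. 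Since $j$ is affine (by \cite[Ex.~IV.1.3, Ex.~III.4.1]{Hartshorne1977} and Serre's criterion, exactly as in the cited proof), each $R^q j_* (\mc F^i|_U)$ vanishes for $q > 0$, so $\HH^m(Y, j_*(\mc F^{\bullet}|_U)) \cong \HH^m(U, \mc F^{\bullet}|_U)$. The latter is in turn computed by the two-term complex of $k$-vector spaces $\mc F^0(U) \xrightarrow{d} \mc F^1(U)$, since $U$ is affine and the $\mc F^i$ are locally free; in particular $\HH^1(U, \mc F^{\bullet}|_U) = \coker(\mc F^0(U) \to \mc F^1(U))$ and, more relevantly, the \emph{contribution} of $j_*(\mc F^{\bullet}|_U)$ to the $\HH^1$ of $Y$ is controlled by $H^0(U, \mc F^1)$ and $H^0(U, \mc F^0)$.

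Next I would write out the long exact sequence in hypercohomology associated to the short exact sequence of complexes above. The skyscraper complex $\bigoplus_Q i_{Q,*}(\mc F^{\bullet}_{\eta}/\mc F^{\bullet}_Q)$ has hypercohomology equal to the cohomology of $\bigoplus_Q (\mc F^0_{\eta}/\mc F^0_Q \to \mc F^1_{\eta}/\mc F^1_Q)$ concentrated in degrees $0, 1$. Threading through the connecting maps and using the vanishing from the previous paragraph, I would identify $\HH^1(Y, \mc F^{\bullet})$ as the cokernel of a map from the ``global'' data $\big(\mc F^0(U) \to \mc F^1(U)\big)$-contribution into the ``local'' data $\bigoplus_Q (\mc F^0_{\eta}/\mc F^0_Q \to \mc F^1_{\eta}/\mc F^1_Q)$-contribution. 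Unwinding this cokernel concretely: a class is represented by a pair $(\omega, (h_Q)_{Q \in S})$ with $\omega \in \mc F^1(U)$ a global section over $U$ and $h_Q \in \mc F^0_{\eta}$ an ``adjustment'' at each puncture making $\omega - dh_Q$ regular, i.e.\ lying in $\mc F^1_Q$ — this is exactly the condition defining $Z^1_S(\mc F^{\bullet})$ — and two such pairs are identified when they differ by $(dh, (h + h_Q)_{Q})$ with $h \in \mc F^0(U)$ a global coboundary and $h_Q \in \mc F^0_Q$ a local regular correction, which is exactly $B^1_S(\mc F^{\bullet})$. Finally, the identifications of the two natural maps $H^0(Y, \mc F^1) \to \HH^1$ and $\HH^1 \to H^1(Y, \mc F^0)$ with the stated explicit formulas follow by chasing the same diagram: the first is induced by the inclusion $\mc F^1[-1] \to \mc F^{\bullet}$, sending $\omega$ to the pair with all $h_Q = 0$; the second is induced by $\mc F^{\bullet} \to \mc F^0$, which on representatives reads off $(h_Q)_{Q \in S}$ and compares with the description of $H^1(Y, \mc F^0)$ in Lemma~\ref{lem:description_of_sheaf_cohomology}.

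I expect the main obstacle to be purely bookkeeping rather than conceptual: carefully checking that the connecting homomorphism in the long exact sequence produces precisely the quotient $Z^1_S/B^1_S$ with no spurious terms, and that the two induced maps are given on the nose by the claimed formulas (not merely up to some automorphism or sign). One should in particular verify that the ``global over $U$'' condition $\omega \in \mc F^1(U)$ together with the local regularity conditions $\omega - dh_Q \in \mc F^1_Q$ genuinely reconstruct a hypercocycle for a good enough affine cover of $Y$ — this is where one uses that $U$ is affine so that $H^{>0}(U, -)$ vanishes for coherent sheaves, allowing the ``open part'' of any \v{C}ech--hypercohomology datum to be trivialized. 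Once the long exact sequence is set up, the identification of the maps is forced, so the real content is the (routine but slightly delicate) verification that the snake-lemma bookkeeping matches the combinatorial description of $Z^1_S$ and $B^1_S$.
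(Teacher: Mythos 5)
Your outline is, in substance, the paper's own argument: you start from the same exact sequence $0 \to \mc F^{\bullet} \to j_*(\mc F^{\bullet}|_U) \to \bigoplus_{Q \in S} i_{Q,*}(\mc F^{\bullet}_{\eta}/\mc F^{\bullet}_Q) \to 0$, use the same acyclicity inputs ($U$ affine, $j$ affine, skyscraper quotients), and your concrete description of classes as pairs $(\omega,(h_Q)_{Q\in S})$ modulo $(dh,(h+h_Q)_{Q\in S})$ is exactly the first cohomology of the relevant total complex. The one point where your phrasing would derail a literal execution is the identification of $\HH^1(Y,\mc F^{\bullet})$ as ``the cokernel of a map from the global data into the local data'': taken at the level of sections this is false, since $\mc F^i(U) \to \bigoplus_{Q\in S}\mc F^i_{\eta}/\mc F^i_Q$ is not surjective (its cokernel is $H^1(Y,\mc F^i)$ — that is precisely Lemma~\ref{lem:description_of_sheaf_cohomology}), and the long exact hypercohomology sequence by itself only presents $\HH^1(Y,\mc F^{\bullet})$ as an extension of $\ker\bigl(\HH^1(Y,j_*\mc F^{\bullet}|_U)\to \HH^1(Y,\bigoplus_Q i_{Q,*}(\mc F^{\bullet}_{\eta}/\mc F^{\bullet}_Q))\bigr)$ by a cokernel of $\HH^0$'s; it does not by itself hand you the presentation $Z^1_S/B^1_S$ (note that the classes $(\omega,(0)_{Q\in S})$ with $\omega\in H^0(Y,\mc F^1)$ live in the kernel piece, not the cokernel piece). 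The paper avoids this by replacing the long exact sequence with the mapping cone $C^{\bullet}$ of $j_*(\mc F^{\bullet}|_U) \to \bigoplus_{Q\in S} i_{Q,*}(\mc F^{\bullet}_{\eta}/\mc F^{\bullet}_Q)$, which is quasi-isomorphic to a shift of $\mc F^{\bullet}$; since every term of $C^{\bullet}$ has vanishing higher cohomology, the hypercohomology spectral sequence degenerates and $\HH^1(Y,\mc F^{\bullet})$ is literally the cohomology, in the middle degree, of the global-sections total complex $\mc F^0(U) \to \mc F^1(U)\oplus\bigoplus_{Q\in S}\mc F^0_{\eta}/\mc F^0_Q \to \bigoplus_{Q\in S}\mc F^1_{\eta}/\mc F^1_Q$, which is $Z^1_S(\mc F^{\bullet})/B^1_S(\mc F^{\bullet})$ on the nose; the identification of the two natural maps then comes from functoriality of this construction applied to $\mc F^1[1]^{\bullet}\to\mc F^{\bullet}$ and $\mc F^{\bullet}\to\mc F^0[0]^{\bullet}$, with no separate chase. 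So your plan is sound, but the ``routine bookkeeping'' you defer is most cleanly (and in effect necessarily) carried out by passing to the cone, i.e.\ the bicomplex you implicitly describe, rather than by unwinding the connecting map of the long exact sequence.
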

\begin{proof}
	Before the proof we discuss our notation concerning cochain complexes. For any object $A$ we denote by
	$A[i]^{\bullet}$ the cochain complex concentrated in degree~$i$ with $A[i]^i = A$. Given a cochain complex $C^{\bullet}$,
	let $C[i]^{\bullet}$ denote its shift by $i$, i.e. $C[i]^j = C^{i+j}$ for all $j$. We denote by $h^i(C^{\bullet})$ the $i$-th cohomology
	of the cochain complex $C^{\bullet}$. 
	
	The exact sequence~\eqref{eqn:F_to_FU_exact_sequence} easily implies that $\mc F^{\bullet}$ is the kernel of the map of complexes:
	\[
	j_*(\mc F^{\bullet}|_U) \to \bigoplus_{Q \in S} i_{Q, *}(\mc F^{\bullet}_{\eta}/\mc F^{\bullet}_Q).
	\]
	Let $C^{\bullet}$ be the mapping cone of this map, i.e. the total complex of the double complex:
	\begin{center}
		% https://tikzcd.yichuanshen.de/#N4Igdg9gJgpgziAXAbVABwnAlgFyxMJZABgBpiBdUkANwEMAbAVxiRGJAF9T1Nd9CKMgEYqtRizYduvbHgJEALOTH1mrROy48QGOQKWlR1NZM3Sde-gpTCVJiRpAAdZwFsAxgAIAYgD0LWWtBZAAme3F1NgArAH0AKgAKV09fAIAfWIBVAEptIPkQgGYI0ydXACMsAHMINGY4WOAARS9XLDAvAGVOLywm5tIveM5k929-YibXGBw6TgB6FImA2Oa8mV0+QqI7Y0izF3G04Xyt-Rswo1VHGISx1P9hTNyzqx2UEv2ytkqauoaAzazg63V6-RaQxGDxWz2yGzEMCg1XgRFAADMAE4QNxIOwgHAQJAAdk2WJxJOohKQAA4ydjcYgaVSiYgAJz0insllIIqcxkANh5iFC-KQAFZhQKxYhlATWeKZWR5UhFDK5dTEKSdOTGZKVUyZUKDRyKJwgA
		\begin{tikzcd}
			j_*(\mc F^0|_U) \arrow[r] \arrow[d] & {\bigoplus_{Q \in S} i_{Q, *}(\mc F^0_{\eta}/\mc F^0_Q)} \arrow[d] \\
			j_*(\mc F^1|_U) \arrow[r]           & {\bigoplus_{Q \in S} i_{Q, *}(\mc F^1_{\eta}/\mc F^1_Q).}
		\end{tikzcd}
	\end{center}
	Then $\mc F^{\bullet}$ is quasi-isomorphic to $C[-1]^{\bullet}$. In particular, $\HH^1(Y, \mc F^{\bullet}) \cong \HH^1(Y, C^{\bullet}[-1])$.
	Consider the hypercohomology spectral sequence $E^{ij}_1 = H^j(Y, C^{\bullet}[-1]^i) \Rightarrow \HH^{i+j}(Y, C^{\bullet}[-1])$.
	Note that by a similar argument as in Lemma~\ref{lem:description_of_sheaf_cohomology}, $E^{ij}_1 = 0$ for $j \ge 1$. Thus $\HH^i(Y, C^{\bullet}[-1])
	= h^i(H^0(Y, C^{\bullet}[-1])) = h^{i-1}(H^0(Y, C^{\bullet}))$. In particular:
	\[
	\HH^1(Y, C^{\bullet}[-1]) = h^0(H^0(Y, C^{\bullet})) = Z^1_S(\mc F^{\bullet})/B^1_S(\mc F^{\bullet}).
	\]
	(the second equality follows directly from the definition of $H^0(Y, C^{\bullet})$).
	The second statement follows from the functoriality for the maps $\mc F^1[1]^{\bullet} \to \mc F^{\bullet}$
	and $\mc F^{\bullet} \to \mc F^0[0]^{\bullet}$.
\end{proof}
The key idea of the proof of Proposition~\ref{prop:extending_resG} is to decompose the differential form $dz^{\vee}$ into certain ``local components''.
We give now a brief motivation for this concept.
Suppose that we lifted the map $\res_G^{\vee} : J_{X/Y} \to H^1(X, \mc O_X)$
to a map $\res_G^{\vee} : J_{X/Y} \to H^1_{dR, log}(X, R)$. Let $e$ be the neutral element of $G$. Then $\res_G^{\vee}(\ol e_Q)$ must be of the form $(\xi_Q, z^{\vee} \cdot \delta(Q))$, where $\xi_Q \in \Omega_X(V)$ is a certain differential form.
The conditions defining $J_{X/Y}$ easily imply among others that $\sum_{Q \in B} \xi_Q = dz^{\vee}$. It turns out that the forms $(\xi_Q)_{Q \in B}$ are used not only in the construction
of $\res_G^{\vee} : J_{X/Y} \to H^1_{dR, log}(X, R)$, but also of $\res_G : H^1_{dR}(X) \to I_{X/Y}$.
The following lemma assures the existence of the forms $(\xi_Q)_{Q \in B}$ with the desired properties.
\begin{Lemma} \label{lem:xiQ_existence}
	There exists a system of differential forms $(\xi_Q)_{Q \in B}$, $\xi_Q \in \Omega_{k(X)}$
	such that the following conditions are satisfied:
	\begin{enumerate}[(1)]
		\item $\xi_Q \equiv dz^{\vee} \pmod{\Omega_{X, Q}}$,
		\item $\xi_Q \in \Omega_{X, Q'}$ for every $Q' \in Y(k)$, $Q' \neq Q$,
		\item $\sum_{g \in g_0 G_Q} g^*(\xi_Q) = 0$ for every $g_0 \in G$,
		\item $\sum_{Q \in B} \xi_Q = dz^{\vee}$.
	\end{enumerate}
\end{Lemma}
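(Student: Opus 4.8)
\textbf{Proof plan for Lemma~\ref{lem:xiQ_existence}.}

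The plan is to build the forms $\xi_Q$ by first producing, for each $Q \in B$, a local modification of $dz^{\vee}$ that is holomorphic everywhere except at $Q$ and has the right residue behaviour, then to fix up the remaining discrepancy using a globally holomorphic differential. First I would record what $dz^{\vee}$ looks like locally: by Corollary~\ref{cor:main_lemma_OX} we have $z^{\vee} \in \mc O_X(V)$, so $dz^{\vee} \in \Omega_X(V)$, i.e. $dz^{\vee}$ is holomorphic away from $R = \pi^{-1}(B)$. Hence $dz^{\vee} \pmod{\Omega_{X,Q}}$ is a well-defined element of $\Omega_{k(X)}/\Omega_{X,Q}$ for each $Q \in B$, and these are the only obstructions to $dz^{\vee}$ being globally holomorphic. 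The naive idea ``$\xi_Q := $ the part of $dz^{\vee}$ supported at $Q$'' via the exact sequence~\eqref{eqn:constructing_diff_forms} nearly works: choosing $\xi_Q \in \Omega_{k(X)}$ with $\xi_Q \equiv dz^{\vee} \pmod{\Omega_{X,Q}}$ and $\xi_Q \in \Omega_{X,Q'}$ for $Q' \ne Q$ immediately gives (1) and (2), and $\sum_{Q \in B} \xi_Q - dz^{\vee}$ is then a differential that is holomorphic at every point — i.e. an element of $H^0(X, \Omega_X)$ — which must be absorbed to get (4).

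The subtle point, and the one requiring the bulk of the work, is condition (3), the $G_Q$-averaging vanishing $\sum_{g \in g_0 G_Q} g^*(\xi_Q) = 0$. The key is that this averaging operation is, up to the normality of $G_Q$, exactly the operation $\tr_{X/X_Q}$ applied component-by-component; more precisely, by~\eqref{eqn:g_component_of_h} the coefficient system of $\sum_{g \in g_0 G_Q} g^*(\xi_Q)$ in the basis $\{g^*(z)\}$ is obtained by summing the coefficients of $\xi_Q$ over cosets of $G_Q$, which is governed by $\tr_{X/X_Q}$ as in the proof of Lemma~\ref{lem:main_lemma_Omega_Y}~(1) and equation~\eqref{eqn:tr_XQ_gz}. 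So I would impose (3) from the start by working not with arbitrary local lifts but with lifts chosen so that $\tr_{X/X_Q}(\xi_Q) = 0$; since $\tr_{X/X_Q}(dz^{\vee}) = d(\tr_{X/X_Q} z^{\vee}) = d z_Q^{\vee}$ and $z_Q^{\vee} = \tr_{X/X_Q}(z^{\vee}) \in \mc O_{X_Q, Q}$ is regular at $Q$ by Corollary~\ref{cor:main_lemma_OX}, the form $dz^{\vee}$ already has $X_Q$-trace holomorphic at $Q$; the freedom to adjust $\xi_Q$ by something in $\Omega_{X,Q}$ with prescribed $X_Q$-trace — combined with surjectivity of $\tr_{X/X_Q}$ on differentials and the non-degeneracy of the local residue pairing~\eqref{eqn:local_duality_X} — lets one arrange $\tr_{X/X_Q}(\xi_Q) \in \Omega_{X_Q, Q}$, in fact $=0$ after a further global correction, while keeping (1) and (2). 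One then checks that $\tr_{X/X_Q}(\xi_Q) = 0$ forces each coset-sum $\sum_{g \in g_0 G_Q}g^*(\xi_Q)$ to vanish, which is (3), and that the analogous coset sums over $G_{Q'}$ for $Q' \ne Q$ vanish automatically because $\xi_Q$ is already holomorphic at $Q'$ — so the individual $\xi_Q$ do not spoil each other.

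Finally, for (4): set $\omega_0 := dz^{\vee} - \sum_{Q \in B} \xi_Q$. By (1) and (2) this is holomorphic at every point of $X$, hence $\omega_0 \in H^0(X, \Omega_X)$; it also satisfies $\sum_{g \in g_0 G_Q} g^*(\omega_0) = \sum_{g \in g_0 G_Q} g^*(dz^{\vee}) = \tr$-type expression which, by the computation above applied to $dz^{\vee}$ itself together with $z_Q^{\vee} \in \mc O_{X_Q,Q}$, is holomorphic — so in fact $\res_G(\omega_0) \in I_{X/Y}$ is forced, and more importantly the averaging property (3) holds for $\omega_0$ as well. I would then absorb $\omega_0$ by redefining $\xi_{Q_0} \mapsto \xi_{Q_0} + \omega_0$ for one fixed base point $Q_0 \in B$: since $\omega_0$ is globally holomorphic this preserves (1) and (2) at every $Q$, it makes $\sum_{Q} \xi_Q = dz^{\vee}$ on the nose giving (4), and since $\omega_0$ also satisfies the $G_{Q_0}$-coset-vanishing it preserves (3). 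The main obstacle is genuinely the simultaneous satisfaction of the local normalization (3) with conditions (1)–(2): one must verify that the space of local lifts of $dz^{\vee}$ at $Q$ with vanishing $X_Q$-trace is non-empty, which is where the regularity of $z_Q^{\vee}$ (Corollary~\ref{cor:main_lemma_OX}) and the surjectivity/duality properties of the trace on differentials are essential; everything else is bookkeeping with the exact sequence~\eqref{eqn:constructing_diff_forms} and the coset decomposition.
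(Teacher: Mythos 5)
Your reduction of condition (3) to the single equation $\tr_{X/X_Q}(\xi_Q)=0$ is correct (by normality of $G_Q$, $\sum_{g \in g_0 G_Q} g^*(\xi_Q) = g_0^*(\tr_{X/X_Q}(\xi_Q))$, exactly as in Step IV of the paper's proof), and local lifts satisfying (1)–(2) do exist since the residues of the exact form $dz^{\vee}$ sum to zero over each fibre. But the two steps that carry the real content have gaps. First, within the constraints (1)–(2) the only remaining freedom is to change $\xi_Q$ by an element of $H^0(X,\Omega_X)$ (adding an arbitrary element of $\Omega_{X,Q}$ can destroy (2)); so arranging $\tr_{X/X_Q}(\xi_Q)=0$ requires that $\tr_{X/X_Q}(\xi_Q^0)$ lie in $\tr_{X/X_Q}\bigl(H^0(X,\Omega_X)\bigr)$ for some initial lift $\xi_Q^0$, and neither "surjectivity of the trace" on global holomorphic differentials of the wildly ramified cover $X \to X_Q$ nor membership of this particular element in the image is established — this is not a formal consequence of the local duality \eqref{eqn:local_duality_X}. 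Second, and fatally, the final absorption step breaks (3): with $\omega_0 := dz^{\vee}-\sum_{Q}\xi_Q$, replacing $\xi_{Q_0}$ by $\xi_{Q_0}+\omega_0$ preserves (3) at $Q_0$ only if $\tr_{X/X_{Q_0}}(\omega_0)=0$. Your justification asserts $\sum_{g\in g_0 G_{Q_0}}g^*(\omega_0)=\sum_{g\in g_0 G_{Q_0}}g^*(dz^{\vee})$, which would need the $G_{Q_0}$-coset sums of $\xi_{Q'}$ to vanish for all $Q'\neq Q_0$; your construction only controls the $G_{Q'}$-coset sums of $\xi_{Q'}$, and these are different subgroups in general. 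Moreover even that right-hand side equals $g_0^*(dz^{\vee}_{Q_0})$, which by Corollary~\ref{cor:main_lemma_OX} is holomorphic at points over $Q_0$ but in general nonzero, whereas (3) demands exact vanishing. Your scheme does go through in the special case where $G_Q=G$ for all $Q\in B$ (then every coset sum is $\tr_{X/Y}$, all $\tr_{X/Y}(\xi_{Q'})=0$ by construction, and $\tr_{X/Y}(dz^{\vee})=d\,\tr_{X/Y}(z^{\vee})=0$), but not when the stabilizers vary.

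This global compatibility between (3) and (4) is precisely what the paper's proof is organized around: it encodes the coefficient systems of the $\xi_Q$ as a $k$-linear map $\varphi$ on $J'=\bigoplus_{Q\in B}J_{G,G_Q}$, prescribes $\varphi$ on the diagonal subspace $\mc D$ (this is condition (4)), lets factoring through $J'$ enforce (3), and imposes residue normalizations on the subspaces $E_Q$ to upgrade the logarithmic local statements to the exact conditions (1)–(2) via Lemma~\ref{lem:main_lemma_Omega_Y}~(2). The consistency check on $E_Q\cap\mc D$ (Step II, which also uses Lemma~\ref{lem:GME_implies_no_etale_cover}~(2)) is exactly the point where the conflict your absorption step runs into is resolved; some argument of this kind, or a replacement for it, is missing from your plan.
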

\begin{proof}
	Denote $J' := \bigoplus_{Q \in B} J_{G, G_Q}$. We divide the proof into four steps.\\
	
	\noindent \bb{Step I:} we define the homomorphism $\psi : J' \to \bigoplus_{Q \in B} \Omega_{k(Y)}/\Omega_Y(B)_Q$.\\
	\emph{Proof of Step I:} Consider the $k$-linear homomorphism:
	\begin{align*}
		\wt{\psi} : \bigoplus_{Q \in B} k[G]  &\to \bigoplus_{Q \in B} \Omega_{k(Y)}/\Omega_Y(B)_Q,\\
		g_Q &\mapsto (dz^{\vee})_g \cdot \delta(Q).
	\end{align*}
	Note that $\wt{\psi}$ induces a map:
	\[
	\psi : J' \to \bigoplus_{Q \in B} \Omega_{k(Y)}/\Omega_Y(B)_Q.
	\]
	Indeed, for any $g_0 \in G$, $Q \in B$:
	\begin{equation} \label{eqn:chi_is_regular}
		\sum_{g \in g_0 G_Q} (dz^{\vee})_g \in \Omega_{X_Q, Q} \cap \Omega_{k(Y)} = \Omega_{Y, Q},
	\end{equation}
	since $\sum_{g \in g_0 G_Q} (dz^{\vee})_g$ may be written as a combination of forms $h^*(dz_Q^{\vee})$ for $h \in G$ with coefficients in $\mc O_{X_Q, Q}$ (cf. Lemma~\ref{lem:main_lemma_Omega_Y}~(1) and Corollary~\ref{cor:main_lemma_OX}). Therefore:
	\begin{align*}
		\wt{\psi} \left(\sum_{g \in g_0 G_Q} g_Q \right) &= \sum_{g \in g_0 G_Q} (dz^{\vee})_g \cdot \delta(Q)\\
		&= 0 \qquad \textrm{ in } \bigoplus_{Q \in B} \Omega_{k(Y)}/\Omega_Y(B)_Q
	\end{align*}
	and $\psi$ is well-defined.\\
	
	\noindent \bb{Step II:} let $d_g := \sum_{Q \in B} \ol g_Q \in J'$ for any $g \in G$, $\mc D := \Span_k(d_g : g \in G)$ and:
	\[
		E_Q := \Span_k \left( \sum_{g \in g_0 G_Q} \ol g_{Q'} : g_0 \in G, Q' \in B \right) \subset J' \quad \textrm{ for any } Q \in B.
	\]
	Then $(d_g)_{g \neq e}$ is a basis of $\mc D$ and $E_Q \cap \mc D = \Span_k \left(\sum_{g \in g_0 G_Q} d_g : g_0 \in G \right)$.\\
	
	\emph{Proof of Step II:} Note that $\sum_{g \in G} d_g = 0$. It suffices to prove that that the elements $(d_g)_{g \neq e}$ are linearly independent over $k$.
	Suppose to the contrary that $\sum_{g \in G} a_g \cdot d_g = 0$ in $J'$ for some
	$a_g \in k$, where $a_e := 0$. Then by the definition of $J'$ we have $a_{g_1} = a_{g_2}$ for any $g_1, g_2 \in G$ for which there is a $Q \in B$ such that $g_1 G_Q = g_2 G_Q$. But by Lemma~\ref{lem:GME_implies_no_etale_cover}~(2)
	$G = \langle G_Q : Q \in B \rangle$. Hence any $g \in G$ can be written in the
	form $g_1 \cdot \ldots \cdot g_m$ for $g_i \in G_{Q_i}$,
	$Q_i \in B$. Therefore:
	\[
	0 = a_e = a_{g_1} = a_{g_1 g_2} = \ldots = a_g.
	\]
	Hence $(d_g)_{g \neq e}$ are linearly independent.\\
	Suppose now that $\sum_{g \in G} a_g d_g \in E_Q$ for some $Q \in B$ and $a_g \in k$, i.e.
	\[
		\sum_{g \in G} a_g d_g = \sum_{g_0, Q'} b_{g_0, Q'} \sum_{g \in g_0 G_Q} \ol g_{Q'}
	\]
	for some $b_{g_0, Q'} \in k$. Then (by lifting this relation to $\bigoplus_B k[G]$ and comparing the coefficients of $g_Q$) one sees that
	$a_{g_1} = a_{g_2}$ for any $g_1, g_2 \in G$ such that $g_1 G_Q = g_2 G_Q$. Thus, if $G/G_Q = \{ g_1 G_Q, \ldots, g_n G_Q \}$:
	\[
		\sum_{g \in G} a_g d_g = \sum_{i = 1}^n a_{g_i} \sum_{g \in g_i G_Q} d_g,
	\]
	which proves the second part.\\
	
	\noindent \bb{Step III:} we define the homomorphisms $\varphi_{\mc D} : \mc D \to \Omega_{k(Y)}$ and $\varphi : J' \to \Omega_{k(Y)}$.\\
	\emph{Proof of Step III:} Let the $k$-linear map $\varphi_{\mc D} : \mc D \to \Omega_{k(Y)}$ be defined by putting $\varphi_{\mc D}(d_g) := (dz^{\vee})_g$ for $g \neq e$. Note that then $\varphi(d_e) = (dz^{\vee})_e$, since $d_e = - \sum_{g \neq e} d_g$ and
	by~\eqref{eqn:trace_of_dual_z}:
	\[
	\sum_{g \in G} (dz^{\vee})_g = \frac{\tr_{X/Y}(dz^{\vee})}{\tr_{X/Y}(z)} 
	= \frac{d \tr_{X/Y}(z^{\vee})}{\tr_{X/Y}(z)} = 0.
	\]
	Using multiple times Replacement Theorem (cf. \cite[Theorem 11.3]{DummitFoote2004}) one can construct a basis $\mc B$ of the $k$-vector space $J'$ such that:
	\[
		V = \Span_k(\mc B \cap V) \quad \textrm{ for } V \in \{ E_Q : Q \in B \} \cup \{ E_Q \cap \mc D : Q \in B \} \cup \{ \mc D \}.
	\]
	We define the $k$-linear homomorphism $\varphi : J' \to \Omega_{k(Y)}$ by its values on $\mc B$:
	\begin{itemize}
		\item for $b \in \mc B \cap \mc D$ we define $\varphi(b) = \varphi_{\mc D}(b)$,
		\item for any $b \in \mc B \setminus \mc D$, $\varphi(b) \in \Omega_{k(Y)}$ is any differential form
		that is regular on $Y \setminus B$ and satisfies:
		\begin{align*}
			(\varphi(b))_{Q \in B} &= \psi(b) \textrm{ in } \bigoplus_{Q \in B} \Omega_{k(Y)}/\Omega_Y(B)_Q,\\
			\res_{Q}(\varphi(b)) &= 0 \quad \textrm{ for every } Q \in B
		\end{align*}
		(observe that such a form exists by~\eqref{eqn:constructing_diff_forms}).
	\end{itemize}
	Then $\varphi(\alpha) = \varphi_{\mc D}(\alpha)$ for every $\alpha \in \mc D$, since this holds on the basis $\mc D \cap \mc B$ of $\mc D$.
	Moreover, for every $\alpha \in J'$:
	\begin{equation} \label{eqn:phi_equals_psi}
		(\varphi(\alpha))_{Q \in B} = \psi(\alpha) \textrm{ in } \bigoplus_{Q \in B} \Omega_{k(Y)}/\Omega_Y(B)_Q.
	\end{equation}
	Indeed, for every $g \in G$:
	\[
		(\varphi(d_g))_{Q \in B} = ((d z^{\vee})_g)_{Q \in B} = \sum_{Q \in B} (d z^{\vee})_g \cdot \delta(Q)
		= \psi(d_g) \textrm{ in } \bigoplus_{Q \in B} \Omega_{k(Y)}/\Omega_Y(B)_Q.
	\]
	Hence, \eqref{eqn:phi_equals_psi} holds for $\alpha \in \mc D$. This in turn implies that~\eqref{eqn:phi_equals_psi} holds on basis $\mc B$.
	Therefore it holds for every $\alpha \in J'$.
	Observe also that:
	\begin{equation} \label{eqn:phi_EQ}
		\res_Q(\varphi(\alpha)) = 0 \qquad \textrm{ for every } Q \in B \textrm{ and } \alpha \in E_Q.
	\end{equation}
	Indeed, suppose that $\alpha = \sum_{b \in \mc B} a_b \cdot b \in E_Q$ for $a_b \in k$. Then, by the definition
	of $\varphi$ and~$\mc B$:
	\[
		\res_Q(\varphi(\alpha)) = \sum_{b \in \mc B} a_b \cdot \res_Q(\varphi(b)) = \sum_{b \in \mc B \cap E_Q \cap \mc D} a_b \cdot \res_Q(\varphi(b)).
	\]
	But $\varphi(E_Q \cap \mc D) \subset \Omega_{Y, Q}$, since $E_Q \cap \mc D = \Span_k ( \sum_{g \in g_0 G_Q} d_g : g_0 \in G )$ by Step II 
	and for any $g_0 \in G$:
	\[
		\varphi \left(\sum_{g \in g_0 G_Q} d_g \right) = \sum_{g \in g_0 G_Q} (dz^{\vee})_g \in \Omega_{Y, Q}
	\]
	(see~\eqref{eqn:chi_is_regular} for the last inclusion). Thus $\res_Q(\varphi(\alpha)) = 0$.\\
	
	\noindent \bb{Step IV: we define $\xi_Q$.}\\
	\emph{Proof of Step IV:} Define for any $Q \in B$:
	\[
	\xi_Q := \sum_{g \in G} g^*(z) \cdot \varphi(\ol g_Q).
	\]
	We check now that the forms~$\xi_Q$ satisfy the listed conditions. 
	Note that for any $g \in G$:
	\[
	(dz^{\vee})_g = \varphi(d_g) = \varphi \left(\sum_{Q \in B} \ol g_Q \right) = \sum_{Q \in B} \xi_{Q, g}.
	\]
	Hence:
	\begin{align*}
		\sum_{Q \in B} \xi_Q &= \sum_{Q, g} g^*(z) \xi_{Q, g}
		= \sum_{g \in G} g^*(z) \sum_{Q \in B} \xi_{Q, g}\\
		&= \sum_{g \in G} g^*(z) (dz^{\vee})_g = dz^{\vee},
	\end{align*}
	which proves~(4). Moreover for any $g_0 \in G$, $Q \in B$:
	\begin{align*}
		0 &= \varphi \left(\sum_{g \in g_0 G_Q} \ol g_Q \right) = \sum_{g \in g_0 G_Q} \xi_{Q, g}.
	\end{align*}
	Hence, if $G/G_Q = \{ g_1 G_Q, \ldots, g_r G_Q \}$, using the normality of $G_Q$ in $G$
	and the equality~\eqref{eqn:tr_XQ_gz}:
	\begin{align*}
		\sum_{g \in g_0 G_Q} g^*(\xi_Q) &= \sum_{g \in G_Q g_0} g^*(\xi_Q)
		= g_0^*(\tr_{X/X_Q}(\xi_Q))\\
		&= g_0^* \left(\sum_{i = 1}^r g_i^*(z_Q) \sum_{g \in g_i G_Q} \xi_{Q, g} \right)\\
		&= \sum_{i = 1}^r (g_i \cdot g_0)^*(z_Q) \sum_{g \in g_i G_Q} \xi_{Q, g} = 0
	\end{align*}
	and~(3) is also true.\\
	We prove now the properties (1) and~(2). Using~\eqref{eqn:phi_equals_psi}, we have for every $Q \in B$, $g \in G$:
	\begin{align*}
		\varphi(\ol g_Q) &\equiv (dz^{\vee})_g \pmod{\Omega_Y(B)_Q}\\
		\varphi(\ol g_Q) &\equiv 0 \pmod{\Omega_Y(B)_{Q'}} \qquad \textrm{ for } Q' \in B, Q' \neq Q.
	\end{align*}
	Using Lemma~\ref{lem:inclusions_of_modules} one easily deduces that $\xi_Q \in \Omega_X(R)_{Q'}$
	for $Q' \in B$, $Q' \neq Q$ and $\xi_Q - dz^{\vee} \in \Omega_X(R)_Q$. Moreover, for any
	$g_0 \in G$ and $Q' \in B$, $Q' \neq Q$, using~\eqref{eqn:phi_EQ}:
	\begin{align*}
		\sum_{g \in g_0 G_{Q'}} \res_{Q'}(\xi_{Q, g}) &= 
		\res_{Q'} \left(\sum_{g \in g_0 G_{Q'}} \varphi(\ol g_Q) \right)\\
		&= \res_{Q'}\left(\varphi\left( \sum_{g \in g_0 G_{Q'}} \ol g_Q \right) \right) = 0,
	\end{align*}
	since $\sum_{g \in g_0 G_{Q'}} \ol g_Q \in E_{Q'}$.
	Therefore (2) holds by Lemma~\ref{lem:main_lemma_Omega_Y}~(2). Analogously,
	for any $Q \in B$, $g_0 \in G$, using~\eqref{eqn:chi_is_regular}:
	\begin{align*}
		\sum_{g \in g_0 G_{Q}} \res_Q(\xi_{Q, g} - (dz^{\vee})_g)
		&= \res_Q \left(\sum_{g \in g_0 G_{Q}} \varphi(\ol g_Q) \right) - \res_Q \left(\sum_{g \in g_0 G_Q} (dz^{\vee})_g \right)\\
		&= \res_Q \left(\varphi \left(\sum_{g \in g_0 G_{Q}} \ol g_Q \right) \right) - \res_Q \left(\sum_{g \in g_0 G_Q} (dz^{\vee})_g \right)\\
		&= \res_Q (0) - 0 = 0.
	\end{align*}
	Thus $\xi_Q - dz^{\vee} \in \Omega_{X, Q}$ by Lemma~\ref{lem:main_lemma_Omega_Y}~(2), which proves~(1).
\end{proof}
We need one more technical result before the proof of Proposition~\ref{prop:extending_resG}.
\begin{Lemma} \label{lem:cartesian}
	Suppose that in an abelian category we have the following commutative diagram: 
	\vspace{-1em}
	%
		% https://tikzcd.yichuanshen.de/#N4Igdg9gJgpgziAXAbVABwnAlgFyxMJZABgBpiBdUkANwEMAbAVxiRAEEQBfU9TXfIRQBGclVqMWbAELdeIDNjwEiZYePrNWiDgHI5fJYKKj11TVJ3T9PQwJUoATGPOTtIAJIGF-ZUOTOZhJabADS3OIwUADm8ESgAGYAThAAtkhkIDgQSKJZdFgMbAAWEBAA1t7JaUjOWTmIAMzUOAVFOqUVVSnpiHnZSAAs1MUwdFBsOADuEKPjCLYg1b3D9UgArC1tJWWVi8tIzWuImyBzEzrTs2NQC-IHiJkDiHWthTtd+z25LQ1Hb+0zrsIlwgA
		\begin{equation} \label{eqn:two_squares_diagram}
			\begin{tikzcd}
				A \arrow[r, hook] \arrow[d, hook] & B \arrow[r, two heads] \arrow[d, hook] & C \arrow[d, hook] \\
				A' \arrow[r, hook]                & B' \arrow[r, two heads]                & C'.              
			\end{tikzcd}
		\end{equation}
	Assume also that:
	\begin{itemize}
		\item the map $B \to B'$ induces an isomorphism $\coker(A \to B) \cong \coker(A' \to B')$,
		
		\item the maps $A \to C$ and $A' \to C'$ are surjective,
		
		\item the outer rectangle of~\eqref{eqn:two_squares_diagram} is a cartesian diagram.
	\end{itemize}
	Then the right square is a cartesian diagram.
\end{Lemma}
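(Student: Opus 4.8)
The plan is to prove the equivalent assertion that the canonical morphism $\beta\colon B\to P:=B'\times_{C'}C$ is an isomorphism; since $B\to B'$ and $B\to C$ agree after composition into $C'$, such a $\beta$ exists, and the right square is cartesian precisely when $\beta$ is invertible. The starting point is that both $B$ and $P$ sit in short exact sequences over $C$: one has $0\to K\to B\to C\to 0$ with $K:=\ker(B\to C)$ (the map $B\to C$ is epic by hypothesis), and $0\to K'\to P\to C\to 0$ with $K':=\ker(B'\to C')$, because the projection $P\to C$ is the pullback of the epimorphism $B'\to C'$, hence epic, with kernel canonically $K'$. Moreover $\beta$ is a morphism between these two sequences that is the identity on $C$ and restricts on kernels to the map $K\to K'$ induced by $B\to B'$. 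By the short five lemma it therefore suffices to show that $K\to K'$ is an isomorphism.

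To do this, set $N:=\ker(A\to C)$ and $N':=\ker(A'\to C')$, the kernels of the two horizontal composites. Two inputs are needed. First, since the outer rectangle of~\eqref{eqn:two_squares_diagram} is cartesian, the map $A\to A'$ restricts to an isomorphism $N\cong N'$, and this isomorphism coincides with the restriction to kernels of $K\to K'$. Second, since $A\to C$ is epic and $C\cong B/K$, the subobjects $A$ and $K$ of $B$ satisfy $A+K=B$; the second isomorphism theorem then produces a natural isomorphism $\coker(A\to B)\cong K/(A\cap K)=K/N$, and likewise $\coker(A'\to B')\cong K'/N'$, compatibly with the maps induced by $B\to B'$. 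Hence the hypothesis that $B\to B'$ induces an isomorphism $\coker(A\to B)\cong\coker(A'\to B')$ becomes an isomorphism $K/N\cong K'/N'$. Applying the five lemma to the morphism of short exact sequences

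\begin{center}
\begin{tikzcd}
0 \arrow[r] & N \arrow[r] \arrow[d, "\sim"] & K \arrow[r] \arrow[d] & K/N \arrow[r] \arrow[d, "\sim"] & 0 \\
0 \arrow[r] & N' \arrow[r] & K' \arrow[r] & K'/N' \arrow[r] & 0
\end{tikzcd}
\end{center}

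then shows that $K\to K'$ is an isomorphism, which proves the lemma.

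There is no serious conceptual obstacle here; the content is entirely bookkeeping, and the step requiring the most care is verifying that all of the isomorphisms above are induced by the single morphism $B\to B'$ and are mutually compatible: that $\ker(P\to C)$ is identified with $K'$ compatibly with $\beta$, that the pullback isomorphism $N\cong N'$ is the restriction of $K\to K'$, and that the isomorphism $\coker(A\to B)\cong K/N$ is natural enough for the hypothesis to transport to $K/N\cong K'/N'$. Each of these is a short diagram chase, valid in any abelian category --- either via the Freyd--Mitchell embedding or directly from the characterization of pullbacks and pushouts by exactness of the associated sequences $0\to P\to X\oplus Y\to Z$ (resp.\ $P\to X\oplus Y\to Z\to 0$), which is also the mechanism behind Step~III in the proof of Proposition~\ref{prop:log_diffs_and_diffs}; the two appeals to the five lemma could likewise be replaced by the snake lemma. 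In fact only the monicity of $A\to B$ and $A'\to B'$ is used, the remaining monomorphisms in~\eqref{eqn:two_squares_diagram} playing no role in this conclusion.
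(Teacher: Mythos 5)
Your proof is correct, and it takes a genuinely different route from the paper's. The paper argues entirely on the pushout side: it first upgrades the cartesian outer rectangle to a cocartesian one using the surjectivity hypothesis (Stacks 08N4(1)), observes that the cokernel condition makes the left square cocartesian (B\"uhler, Prop.~2.12), pastes pushouts to conclude the right square is cocartesian, and finally uses the monomorphism $B \hookrightarrow B'$ to pass back from cocartesian to cartesian (Stacks 08N4(2)). You instead compare $B$ directly with the pullback $P = B'\times_{C'}C$ via the two short exact sequences $0\to K\to B\to C\to 0$ and $0\to K'\to P\to C\to 0$, reducing the claim to the isomorphy of $K\to K'$, which you get from the cartesian outer rectangle (giving $N\cong N'$) together with the cokernel hypothesis transported through the second isomorphism theorem (giving $K/N\cong K'/N'$), plus two applications of the five lemma. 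The compatibility checks you flag (that $\ker(P\to C)\cong K'$ matches $\beta$ on kernels, that $N\cong N'$ is the restriction of $K\to K'$, and that $\coker(A\to B)\cong K/N$ is natural in $B\to B'$) all hold, and your surjectivity hypotheses are used exactly where you say ($A+K=B$ and $A'+K'=B'$). What your approach buys is a self-contained argument needing no external pushout--pullback lemmas, and, as you note, it does not use the vertical monomorphisms at all --- in particular not $B\hookrightarrow B'$, which the paper's final step does require --- at the cost of more explicit bookkeeping; the paper's proof is shorter but leans on three citations.
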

\begin{proof}
	Note that the outer rectangle of~\eqref{eqn:two_squares_diagram} is cocartesian by \cite[\href{https://stacks.math.columbia.edu/tag/08N4}{Lemma 08N4 (1)}]{stacks-project}. Moreover, the condition $\coker(A \to B) \cong \coker(A' \to B')$ implies that the left square of~\eqref{eqn:two_squares_diagram} is cocartesian by~\cite[Prop. 2.12 (i) $\Leftrightarrow$ (iv)]{Buhler_Exact_Categories}. Therefore the right square is also cocartesian,
	cf. the dual version of \cite[ex. III.4.8 (b)]{MacLane_Categories_working}. But this implies that it is also cartesian by \cite[\href{https://stacks.math.columbia.edu/tag/08N4}{Lemma 08N4~(2)}]{stacks-project}.
\end{proof}

\begin{proof}[Proof of Proposition~\ref{prop:extending_resG}] \mbox{}\\
	(1) Define:
	\begin{align*}
		\res_G : Z^1_B(\pi_* \Omega_X^{\bullet}) &\to \bigoplus_{Q \in B} k[G],\\
		(\omega, \nu) &\mapsto \sum_{Q, g} (\res_Q((\omega - d \nu_Q)_g) - \langle g^*(\xi_Q), \nu \rangle) \cdot g_Q,
	\end{align*}
	where $\xi_Q$ are as in Lemma~\ref{lem:xiQ_existence}, $\langle \cdot, \cdot \rangle$ is the duality pairing~\eqref{eqn:duality_pairing_formula}, $\omega \in \Omega_X(V)$, $\nu \in \bigoplus_{Q \in B} k(X)$. We'll show that $\res_G$ induces the desired homomorphisms $H^1_{dR}(X) \to I_{X/Y}$ and $H^1_{dR, log}(X, R) \to k[G]_B$.
	We start by checking that the image of this map lies in $I_{X/Y}$.
	Indeed, $\omega - d \nu_Q \in \Omega_{X, Q}$ implies that
	\[
	\sum_{g \in G} \res_Q((\omega - d \nu_Q)_g) \cdot g \in I_{G, G_Q}
	\]
	by Lemma~\ref{lem:main_lemma_Omega_Y}.
	Moreover, by~Lemma~\ref{lem:xiQ_existence}~(3):
	\[
	\sum_{g \in G} \langle g^*(\xi_Q), \nu \rangle \cdot g \in I_{G, G_Q}.
	\]
	Using Lemma~\ref{lem:xiQ_existence}~(4), \eqref{eqn:residue_and_trace}, the equality $\res_P(f \cdot dh) = - \res_P(h \cdot df)$ for any $f, h \in k(X)$ and~\eqref{eqn:gth_component_trace_f} we obtain:
	\begin{align*}
		\sum_{Q \in B} \langle g^*(\xi_Q), \nu \rangle &=
		\langle dg^*(z^{\vee}), \nu \rangle
		= \sum_{P \in R} \res_P(\nu_{\pi(P)} \cdot dg^*(z^{\vee}))\\
		&= - \sum_{P \in R} \res_P(g^*(z^{\vee}) \cdot d\nu_{\pi(P)})
		= - \sum_{Q \in B} \res_Q((d\nu_Q)_g)
	\end{align*}
	for any $g \in G$. Hence by the residue theorem:
	\begin{align*}
		\sum_{Q, g} (\res_Q((\omega - d \nu_Q)_g) - \langle g^*(\xi_Q), \nu \rangle) \cdot g &= 
		\sum_{Q, g} \res_Q(\omega_g) \cdot g = 0.	
	\end{align*}
	We check now that this map vanishes on $B^1_B(\pi_* \Omega_X^{\bullet})$. Suppose that $h \in \mc O_X(V)$. Then $(dh, (h)_{Q \in B})$
	maps to:
	\[
	\sum_{Q, g} \left(\res_Q((dh - dh)_g) - \langle g^*(\xi_Q), (h)_{Q \in B} \rangle \right) \cdot g_Q = 0.
	\]
	Moreover, if $\nu \in \bigoplus_{Q \in B} \mc O_{X, Q}$ then by \eqref{eqn:residue_and_trace}, \eqref{eqn:gth_component_trace_f}
	and by Lemma~\ref{lem:xiQ_existence} (1), (2):
	\begin{align*}
		\langle g^*(\xi_Q), \nu \rangle &= \sum_{Q' \in B} \sum_{P \in \pi^{-1}(Q')} \res_P(\nu_{Q'} \cdot g^*(\xi_Q))\\
		&= \sum_{P \in \pi^{-1}(Q)} \res_P(\nu_Q \cdot dg^*(z^{\vee}))\\
		&= -\sum_{P \in \pi^{-1}(Q)} \res_P(g^*(z^{\vee}) \cdot d\nu_Q)\\
		&= -\res_Q(\tr_{X/Y}(g^*(z^{\vee}) \cdot d\nu_Q))\\
		&= -\res_Q((d\nu_Q)_g).
	\end{align*}
	Hence:
	\begin{equation*}
		\res_G((0, \nu)) = \sum_{Q, g} \left(\res_Q((0 - d \nu_Q)_g) - \langle g^*(\xi_Q), \nu \rangle \right) \cdot g_Q
		= 0.
	\end{equation*}
	This proves that $\res_G : H^0(X, \Omega_X) \to I_{X/Y}$ extends to $\res_G : H^1_{dR}(X) \to I_{X/Y}$.
	This map is a split surjection, since $\res_G : H^0(X, \Omega_X) \to I_{X/Y}$ has a section.
	Similarly one obtains a split surjection $\res_G : H^1_{dR, log}(X, R) \to k[G]_B$.
	Note that the assumptions of Lemma~\ref{lem:cartesian} with $A := H^0(X, \Omega_X)$, $B := H^1_{dR}(X)$, $C := I_{X/Y}$, $A' := H^0(X, \Omega_X(R))$, $B' := H^1_{dR, log}(X, R)$, $C' := k[G]_B$ are satisfied. Indeed, this follows by the Hodge--de Rham sequences~\eqref{eqn:intro_hodge_de_rham_se} and~\eqref{eqn:hdr_exact_sequence_for_log_de_rham}, and by the proof of Proposition~\ref{prop:log_diffs_and_diffs}. Hence the diagram:
	\begin{center}
		% https://tikzcd.yichuanshen.de/#N4Igdg9gJgpgziAXAbVABwnAlgFyxMJZABgBpiBdUkANwEMAbAVxiRAAkA9ARgH1goAJQC+ACgAaAegDWAShDDS6TLnyEUZblVqMWbAJL8pATWEKlIDNjwEi3ctvrNWiDj35DSAAgYQA5mJS0t6C8orK1mp2pFrUTnqu0sgA4hS8AEIK2jBQfvBEoABmAE4QALZI9iA4EEgAzNQ4dFgMbAAWEBDS5kWlFYhk1bWIAEyNza2uHV09ICXlSIM1ldRtMHRQbDgA7hBrGwjhc31IY0P1q+ubrjt7V4cUwkA
		\begin{tikzcd}
			H^1_{dR}(X) \arrow[r, hook] \arrow[d, two heads] & {H^1_{dR, log}(X, R)} \arrow[d, two heads] \\
			I_{X/Y} \arrow[r, hook]                            & {k[G]_B}                                    
		\end{tikzcd}
	\end{center}
	is cartesian and the maps $\res_G : H^1_{dR}(X) \to I_{X/Y}$ and $\res_G : H^1_{dR, log}(X, R) \to k[G]_B$ have isomorphic kernels by \cite[\href{https://stacks.math.columbia.edu/tag/08N3}{Lemma 08N3~(1)}]{stacks-project}.\\

(2) Observe that for any $Q \in B$ the element
\[
\Xi_Q := (\xi_Q, z^{\vee} \cdot \delta(Q)) \in \Omega_X(V) \times \bigoplus_B k(X)
\]
belongs to $Z^1_B(\pi_* \Omega_X^{\bullet})$ by Lemma~\ref{lem:xiQ_existence}~(1) and~(2). We show now that
the map:
\begin{equation} \label{eqn:pre_lifted_res_vee}
\bigoplus_{Q \in B} k[G] \to Z^1_B(\pi_* \Omega_X^{\bullet}), \qquad g_Q \mapsto g^*(\Xi_Q)
\end{equation}
induces a $k[G]$-linear map $J_{X/Y} \to H^1_{dR, log}(X)$ lifting $\res_G^{\vee} : J_{X/Y} \to H^1(X, \mc O_X)$.
Note that by Lemma~\ref{lem:xiQ_existence}(3) and Corollary~\ref{cor:main_lemma_OX} for any $g_0 \in G$, $Q \in B$:
	\begin{align*}
		\sum_{g \in g_0 G_Q} g^*((\xi_Q, z^{\vee} \cdot \delta(Q))) 
		&= (0, g_0^*(z_Q^{\vee}) \cdot \delta(Q)) \in B^1_B(\pi_* \Omega_X^{\bullet}).
	\end{align*}
Moreover, by Lemma~\ref{lem:xiQ_existence}~(4):
	\begin{align*}
		\sum_{Q \in B} g^*(\Xi_Q) &= (d g^*(z^{\vee}), \sum_{Q \in B} g^*(z^{\vee}) \delta(Q))\\
		&= (d g^*(z^{\vee}), (g^*(z^{\vee}))_{Q \in B}) \in B^1_B(\pi_* \Omega_X^{\bullet}).
	\end{align*}
	Hence in $H^1_{dR, log}(X)$:
	\begin{align*}
		\forall_{g_0 \in G} \, \forall_{Q \in B} \, \sum_{g \in g_0 G_Q} g^*(\Xi_Q) = 0 \quad \textrm{ and } \quad \forall_{g 
		\in G}
		\sum_{Q \in B} g^*(\Xi_Q) = 0,
	\end{align*}
	which proves the claim. Analogously, one shows that the map~\eqref{eqn:pre_lifted_res_vee} induces a map $\res_G^{\vee} : k[G]_B^{\vee} \to H^1_{dR, qlog}(X, R)$
	lifting the map $\res_G^{\vee} : k[G]_B^{\vee} \to H^1(X, \mc O_X(-R))$. Finally, the homomorphisms 
	$\res_G^{\vee} : J_{X/Y} \to H^1_{dR, log}(X, R)$  and  $\res_G^{\vee} : k[G]_B^{\vee} \to H^1_{dR, qlog}(X, R)$ have isomorphic cokernels
	by the dual version of Lemma~\ref{lem:cartesian}.
\end{proof}

\begin{proof}[Proof of Theorem~\ref{thm:main_thm}, part 2]
	Note that by Proposition~\ref{prop:extending_resG} we have:
	\begin{align*}
		H^1_{dR}(X) \oplus k[G]_B &\cong H^1_{dR, log}(X, R) \oplus I_{X/Y},\\
		H^1_{dR, log}(X, R) \oplus k[G]_B^{\vee} &\cong H^1_{dR, qlog}(X, R) \oplus J_{X/Y}.
	\end{align*}
	Hence:
	\begin{equation} \label{eqn:dr_qdr_connection}
		H^1_{dR}(X) \oplus k[G]_B \oplus k[G]_B^{\vee} \cong H^1_{dR, qlog}(X, R) \oplus I_{X/Y} \oplus J_{X/Y}.
	\end{equation}
	We describe now the structure of $H^1_{dR, qlog}(X, R)$. Note that by Riemann--Roch theorem, $\dim_k H^1(Y, \mc O_Y(-B)) = g_Y + \# B - 1$. Let $K_1$ be the kernel of the epimorphism:
	\begin{alignat*}{3}
		H^1_{dR, qlog}(X, R) &\to \quad H^1(X, \mc O_X(-R)) &&\to H^1 \left(Y, \bigoplus_{g \in G} g^*(z^{\vee}) \mc O_Y(-B) \right)&\\
		&\cong \Ind^G_1 H^1(Y, \mc O_Y(-B)) &&\cong k[G]^{g_Y + \# B - 1},&
	\end{alignat*}
	where the first map comes from the Hodge--de Rham exact sequence~\eqref{eqn:hdr_exact_sequence_for_log_de_rham}
	and the second map comes from the inclusion from the Lemma~\ref{lem:inclusions_of_modules2}
	(note that the second map is surjective, since the first cohomology of a finitely supported sheaf on a curve
	vanishes).
	Then, since $k[G]^{g_Y + \# B - 1}$ is a projective $k[G]$-module:
	\begin{equation} \label{eqn:H1_dR=K1+k[G]}
		H^1_{dR, qlog}(X, R) \cong K_1 \oplus k[G]^{g_Y + \# B - 1}.
	\end{equation}
	Moreover, one can explicitly describe $K_1$ as:
	\[
		K_1 = \frac{\{ (\omega, \nu) \in \Omega_X(V) \times K_{1, 1} : \omega - d\nu_Q \in \Omega_X(Q)_Q \quad \forall_{Q \in B} \}}{\{ (df, (f + f_Q)_{Q \in B}) : f \in K_{1, 2}, \quad f_Q \in \mc O_X(-R)_Q \, \forall_{Q \in B} \}},
	\]
	where:
	\begin{align*}
		K_{1, 1} &:= \bigoplus_{Q \in B} \bigoplus_{g \in G} g^*(z^{\vee}) \mc O_Y(-B)_Q,\\
		K_{1, 2} &:= \mc O_X(V) \cap \bigcap_{Q \in B} \bigoplus_{g \in G} g^*(z^{\vee}) \mc O_Y(-B)_Q.
	\end{align*}
	However, by Lemma~\ref{lem:inclusions_of_modules2} and Lemma~\ref{lem:properties_H0Q_H1Q}~(2):
	\begin{align*}
		K_{1, 2} &= \bigoplus_{g \in G} g^*(z^{\vee}) \mc O_Y(U) \cap 
		\bigcap_{Q \in B} \bigoplus_{g \in G} g^*(z^{\vee}) \mc O_Y(-B)_Q\\
		&= \bigoplus_{g \in G} g^*(z^{\vee}) \mc O_Y(-B)(Y) = 0.
	\end{align*}
	Hence by~\eqref{eqn:H1Q}:
	\begin{align*}
		K_1 &= \frac{\{ (\omega, \nu)  \in \Omega_X(V) \times K_{1, 1} : \omega - d\nu_Q \in \Omega_X(Q)_Q \quad \forall_{Q \in B} \}}{\{ (0, (f_Q)_{Q \in B}) : f_Q \in \mc O_X(-R)_Q
			\quad \forall_{Q \in B} \}}\\
		&= \{ (\omega, \nu) \in \Omega_X(V) \times \bigoplus_{Q \in B} H^1_Q : \omega - d\nu_Q \in \Omega_X(Q)_Q \quad \forall_{Q \in B} \}.
	\end{align*}	
	The Hodge--de Rham exact sequence~\eqref{eqn:hdr_exact_sequence_for_qlog_de_rham} implies that the image of the composition
	\[
		H^0 \left(Y, \bigoplus_{g \in G} g^*(z) \Omega_Y(B) \right) \to H^0(X, \Omega_X(R)) \to H^1_{dR, qlog}(X, R)
	\]
	(where the first map comes from Lemma~\ref{lem:inclusions_of_modules} and the second one from~\eqref{eqn:hdr_exact_sequence_for_qlog_de_rham}) lands in $K_1$. Therefore we obtain a monomorphism:
	\begin{equation} \label{eqn:monomorphism_K1}
		H^0 \left(Y, \bigoplus_{g \in G} g^*(z) \Omega_Y(B) \right) \to K_1.
	\end{equation}
	Let $K_2$ be the cokernel of the monomorphism~\eqref{eqn:monomorphism_K1}. Then, analogously as before, using Riemann--Roch theorem and the injectivity of $k[G]$:
	\begin{align}
		K_1 \cong H^0 \left(Y, \bigoplus_{g \in G} g^*(z) \Omega_Y(B) \right) \oplus K_2 \cong k[G]^{g_Y + \# B - 1} \oplus K_2. \label{eqn:K1=K2+k[G]}
	\end{align}
	On the other hand:
	\begin{align*}
		K_2 &= \frac{\{ (\omega, \nu) \in \Omega_X(V) \times \bigoplus_{Q \in B} H^1_Q: \omega - d\nu_Q \in \Omega_X(Q)_Q \}}{\{ (\omega, 0) : \omega \in \bigoplus_{g \in G} g^*(z) \Omega_Y(B)(Y) \}}\\
		&\cong \left\{ (\omega, \nu) \in 
			\frac{\Omega_X(V)}{\bigoplus_{g \in G} g^*(z) \Omega_Y(B)(Y)} \times \bigoplus_{Q \in B} H^1_Q : \omega - d\nu_Q \in \Omega_X(Q)_Q \right\}.
	\end{align*}
	Let $j : V \to X$ be the open immersion.
	Note that both sheaves:
	\[
		\frac{\pi_* \Omega_X(R)}{\bigoplus_{g \in G} g^*(z) \Omega_Y(B)} \textrm{ and } 
		\frac{(\pi \circ j)_* \, \Omega_{V}}{\pi_* \Omega_X(R)}
	\]
	have support contained in $B$. Hence $(\pi \circ j)_* \Omega_{V}/\bigoplus_{g \in G} g^*(z) \Omega_Y(B)$ is also supported on~$B$.
	This implies that:
	\[
		\frac{\Omega_X(V)}{\bigoplus_{g \in G} g^*(z) \Omega_Y(B)(Y)} \cong \bigoplus_{Q \in B}
		\frac{\Omega_{k(X)}}{\bigoplus_{g \in G} g^*(z) \Omega_Y(B)_Q}.
	\]
	Therefore:
	\[
		K_2 \cong \bigoplus_{Q \in B} H^1_{dR, Q},
	\]
	where:
	\begin{align}
		H^1_{dR, Q} &:= \{ (\omega_Q, \nu_Q) \in \frac{\Omega_{k(X)}}{\bigoplus_{g \in G} g^*(z) \Omega_Y(B)_Q} \times H^1_Q :
		\quad \omega_Q - d\nu_Q \in \Omega_X(R)_Q \} \label{eqn:H1dRQ}
	\end{align}
	(note that $\omega_Q - d\nu_Q$ is well-defined only modulo $\bigoplus_{g \in G} g^*(z) \Omega_Y(B)_Q + d \mc O_X(-R)_Q \subset \Omega_X(R)_Q$).
	Thus by~\eqref{eqn:H1_dR=K1+k[G]} and~\eqref{eqn:K1=K2+k[G]}:
	\begin{align*}
		H^1_{dR, qlog}(X, R) &\cong K_1 \oplus k[G]^{g_Y + \# B - 1}\\
		&\cong K_2 \oplus k[G]^{2 \cdot (g_Y + \# B - 1)}\\
		&\cong \bigoplus_{Q \in B} H^1_{dR, Q} \oplus k[G]^{2 \cdot (g_Y + \# B - 1)}.
	\end{align*}
	One finishes the proof using~\eqref{eqn:dr_qdr_connection}.
\end{proof}
In the sequel we will need to know the relation between $H^0_Q$, $H^1_Q$, $H^1_{dR, Q}$. This is provided
by the following local analogue of the Hodge--de Rham exact sequence.
\begin{Lemma} \label{lem:properties_IXY_H1dRQ}
	For every $Q \in B$ there exists an exact sequence:
	\begin{equation*}
		0 \to H^0_Q \to H^1_{dR, Q} \to H^1_Q \to 0.
	\end{equation*}
\end{Lemma}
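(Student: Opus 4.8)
The plan is to write down the two evident maps directly from the description of $H^1_{dR,Q}$ in~\eqref{eqn:H1dRQ} and to check exactness by hand. Abbreviate $A := \Omega_X(R)_Q$, $A_0 := \bigoplus_{g \in G} g^*(z)\,\Omega_Y(B)_Q$, $C := \bigoplus_{g \in G} g^*(z^\vee)\,\mc O_Y(-B)_Q$ and $C_0 := \mc O_X(-R)_Q$. By Lemmas~\ref{lem:inclusions_of_modules} and~\ref{lem:inclusions_of_modules2} one has $A_0 \subset A$ and $C_0 \subset C$, so $H^0_Q = A/A_0$ and $H^1_Q = C/C_0$ by~\eqref{eqn:H0Q} and~\eqref{eqn:H1Q}, while $H^1_{dR,Q}$ is the subspace of $(\Omega_{k(X)}/A_0) \times H^1_Q$ cut out by the relation $\omega_Q - d\nu_Q \in A$. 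First I would record the elementary inclusion $dC_0 \subset \Omega_{X,Q} \subset A$ (since $C_0 \subset \mc O_{X,Q}$); this is exactly what makes the defining condition of $H^1_{dR,Q}$ independent of the lifts involved, as already noted after~\eqref{eqn:H1dRQ}.

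Next I would define $\iota : H^0_Q \to H^1_{dR,Q}$ by $\omega_Q \mapsto (\omega_Q, 0)$ and $p : H^1_{dR,Q} \to H^1_Q$ by $(\omega_Q, \nu_Q) \mapsto \nu_Q$. For $\iota$ one checks that $H^0_Q = A/A_0$ sits inside $\Omega_{k(X)}/A_0$ and that $\omega_Q - d\cdot 0 = \omega_Q \in A$, so the pair indeed lies in $H^1_{dR,Q}$; the map $p$ is just the second projection. Both are manifestly $G$-equivariant, hence $k[G]$-linear.

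Then I would verify the three exactness statements. Injectivity of $\iota$ is immediate, since $(\omega_Q, 0) = 0$ forces $\omega_Q = 0$ in $\Omega_{k(X)}/A_0$, hence in $A/A_0$. For exactness in the middle, given $(\omega_Q,\nu_Q)$ with $p(\omega_Q,\nu_Q) = 0$ I would choose the lift of $\nu_Q$ lying in $C_0$; the condition $\omega_Q - d\wt\nu \in A$ together with $d\wt\nu \in dC_0 \subset A$ forces $\omega_Q \in A/A_0 = H^0_Q$, so $(\omega_Q,\nu_Q) = \iota(\omega_Q)$. For surjectivity of $p$, given $\nu_Q \in H^1_Q$ with any lift $\wt\nu \in C$, the pair $(\overline{d\wt\nu},\nu_Q)$ lies in $H^1_{dR,Q}$ because $\omega_Q - d\nu_Q = 0$, and it maps to $\nu_Q$.

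There is no genuine obstacle here: the one point requiring care is the bookkeeping of the nested quotients — in particular that the first coordinate of an element of $H^1_{dR,Q}$ is a class modulo $A_0$ while $d\nu_Q$ is defined only modulo $dC_0$, so every assertion must be read modulo $A_0 + dC_0 \subset A$. Checking that $\iota$ and $p$ respect these identifications is the place I would be most careful, but it is routine.
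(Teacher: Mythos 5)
Your proposal is correct and follows essentially the same route as the paper: the paper's proof uses exactly the maps $\omega_Q \mapsto (\omega_Q,0)$ and $(\omega_Q,\nu_Q)\mapsto \nu_Q$, proves surjectivity via the preimage $(d\nu_Q,\nu_Q)$, and identifies the kernel with $H^0_Q$ via the observation that $\nu_Q \in \mc O_X(-R)_Q$ forces $\omega_Q \in \Omega_X(R)_Q$ because $\omega_Q - d\nu_Q \in \Omega_X(R)_Q$. Your extra bookkeeping about the quotients being read modulo $A_0 + dC_0 \subset A$ is precisely the parenthetical remark the paper makes after~\eqref{eqn:H1dRQ}, so nothing is missing.
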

\begin{proof}
	It is straightforward that the map $H^0_Q \to H^1_{dR, Q}$ induced
	by $\omega_Q \mapsto (\omega_Q, 0)$ is injective. The map $H^1_{dR, Q} \to H^1_Q$, $(\omega_Q, \nu_Q) \mapsto \nu_Q$ is surjective, since $\nu_Q$ is the image of $(d \nu_Q, \nu_Q)$. Finally, $(\omega_Q, \nu_Q)$ is
	in the kernel of the map $H^1_{dR, Q} \to H^1_Q$ if and only if
	$\nu_Q \in \mc O_X(-R)_Q$, which is equivalent to $\omega_Q \in \Omega_X(R)$ (since $\omega_Q - d \nu_Q \in \Omega_X(R)_Q$).
\end{proof}
We prove now the corollary concerning the structure of $H^1_{dR}(X)$ in the weak ramification case.
\begin{proof}[Proof of Corollary~\ref{cor:hdr_exact_sequence}]
	The proof of~\cite[Main Theorem]{Garnek_equivariant} shows that if $H^1_{dR}(X) \cong H^0(X, \Omega_X) \oplus H^1(X, \mc O_X)$ as $k[G]$-modules, then $d_P'' = 0$ for every $P \in X(k)$.
	Suppose now that $d_P'' = 0$ for every $P \in X(k)$. Then
	$H^0_Q = H^1_Q = H^1_{dR, Q} = 0$ for every $Q \in Y(k)$ by Lemma~\ref{lem:properties_H0Q_H1Q} and Lemma~\ref{lem:properties_IXY_H1dRQ}.
	Hence by Theorem~\ref{thm:main_thm}:
	\[
		H^1_{dR}(X) \cong k[G]^{\oplus 2 \cdot g_Y} \oplus I_{X/Y} \oplus J_{X/Y}
		\cong H^0(X, \Omega_X) \oplus H^1(X, \mc O_X).
	\]
	This implies that the Hodge--de Rham exact sequence splits by \cite[Theorem 3.5 and first paragraph of \S 4]{Guralnick_Roths_theorems}, since we can identify $H^0(X, \Omega_X)$ with a submodule of $H^1_{dR}(X)$
	using the Hodge--de Rham exact sequence~\eqref{eqn:intro_hodge_de_rham_se}.
\end{proof}

\section{Artin--Schreier covers} \label{sec:AS_covers}
In this subsection we construct a magical element for a large class of Artin--Schreier covers
and compute their cohomology. Also, we give an example of a $\ZZ/p$-cover without a magical element.\\

Let $k$ be an algebraically closed field of characteristic $p$ and $Y/k$ be a smooth projective curve.
Recall that the $\ZZ/p$-covers
of $Y$ are in a bijection with the group $k(Y)/\wp(k(Y))$, where $\wp(f) := f^p - f$. An element of
$k(Y)/\wp(k(Y))$ represented by a function $f \in k(Y)$ corresponds to a curve $X$
with the function field given by the equation:
\begin{equation} \label{eqn:artin_schreier}
	y^p - y = f.
\end{equation}
Note that $X$ is irreducible, if and only if $f \not \in \wp(k(Y))$.
The action of $G = \langle \sigma \rangle \cong \ZZ/p$ on $X$ is 
then given by $\sigma(y) := y+1$.  From now on we assume that $\pi : X \to Y$ is a $\ZZ/p$-cover given by an equation of the form~\eqref{eqn:artin_schreier}. We say that $y$ is an \bb{Artin--Schreier generator} of $\pi$.
\subsection{Local standard form}
The Artin--Schreier generator $y$ is in \bb{local standard form at $Q \in Y(k)$},
if it satisfies an equation of the form~\eqref{eqn:artin_schreier}, in which either $f$ is regular at~$Q$ or $f$ has a pole of order not divisible by $p$ at $Q$.
Note that for every $Q \in Y(k)$, there exists an Artin--Schreier generator of $X$ in local standard form at~$Q$. Indeed, given an arbitrary equation of the form~\eqref{eqn:artin_schreier} one can
repeatedly replace $y$ by $y - g$ and $f$ by $f - \wp(g)$, where $g$ is a power
of a suitably chosen uniformizer at~$Q$.\\

\noindent Denote $m_Q = \max \{ i : G_{Q, i} \neq 0 \}$.
Then $d_Q = (m_Q + 1) \cdot (p-1)$. It turns out that if $y$ is in local
standard form at $Q$ then:
\begin{equation} \label{eqn:mQ_formula}
	m_Q = m_{X/Y, Q} := 
	\begin{cases}
		|\ord_Q(f)|, & \textrm{ if } \ord_Q(f) < 0,\\
		0, & \textrm{ otherwise}
	\end{cases}
\end{equation}
(see e.g.~\cite[Lemma 4.2]{Garnek_equivariant}). 
The following result allows us to compute $m_Q$ without finding a local standard form in most cases.
\begin{Lemma} \label{lem:computing_mQ} %!S
	Let $y$ be an arbitrary Artin--Schreier generator of a $\ZZ/p$-cover of $Y$
	given by~\eqref{eqn:artin_schreier}. Suppose that $Q \in Y(k)$ is a pole of $f$
	satisfying:
	\begin{equation} \label{eqn:assumption_to_compute_mQ}
		\ord_Q(df) < \frac{1}{p} \ord_Q(f) - 1.
	\end{equation}
	Then $m_Q = -\ord_Q(df) - 1$.
\end{Lemma}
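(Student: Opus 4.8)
The plan is to pass to a local standard form and turn the statement into an estimate on the order of a differential form. First I would pick an Artin--Schreier generator $y_0 = y - g$ of $\pi$ in local standard form at $Q$, with $g \in k(Y)$ (such a generator exists, and $g$ is automatically $G$-invariant since $\sigma(y_0) = y_0 + 1$ forces $\sigma(g)=g$), and set $f_0 := f - \wp(g) = y_0^p - y_0$. Since $p = 0$ in $k$ we have $d(g^p) = 0$, hence the basic identity
\[ df = df_0 - dg. \]
By the definition of local standard form together with \eqref{eqn:mQ_formula}, either $f_0$ is regular at $Q$ (and $m_Q = 0$), or $f_0$ has a pole of order $m_Q$ at $Q$ with $p \nmid m_Q$, in which case the leading term of $f_0$ survives differentiation and $\ord_Q(df_0) = -m_Q - 1$. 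So everything comes down to showing that, under the hypothesis, the first case cannot occur and that $\ord_Q(dg)$ is too large to interfere with $df_0$, so that $\ord_Q(df) = \ord_Q(df_0) = -m_Q-1$.

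It is convenient to rewrite the hypothesis \eqref{eqn:assumption_to_compute_mQ}: setting $n := -\ord_Q(f) \ge 1$ and $\ell := -\ord_Q(df) - 1$, the inequality $\ord_Q(df) < \tfrac1p\ord_Q(f) - 1$ is exactly $p\ell > n$; in particular it forces $\ell \ge 1$. The first step is then to rule out $f_0$ being regular at $Q$: if it were, then $\wp(g) = f - f_0$ would be polar at $Q$, so $g$ has a pole there, say of order $r \ge 1$, and $\ord_Q(\wp(g)) = -pr$ (as $\ord_Q(g^p)=-pr<-r=\ord_Q(g)$, there is no cancellation), whence $n = pr$; on the other hand $\ord_Q(dg) \ge \ord_Q(g)-1 = -r-1$ and $\ord_Q(df_0) \ge 0$ give $\ord_Q(df) \ge -r-1$, i.e. $\ell \le r$, so $p\ell \le pr = n$, contradicting the hypothesis. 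Hence $f_0$ has a pole at $Q$, $m_Q \ge 1$, $p \nmid m_Q$, and $\ord_Q(df_0) = -m_Q - 1$.

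The second step, by the same mechanism, shows $\ord_Q(dg) \ge -m_Q$. If instead $\ord_Q(dg) \le -m_Q - 1 \le -2$, then $g$ is polar at $Q$ of some order $r$; from $\ord_Q(dg) \ge \ord_Q(g)-1 = -r - 1$ we get $r \ge m_Q \ge 1$, so $pr > m_Q$ and therefore $\ord_Q(f) = \ord_Q(f_0 + \wp(g)) = -pr$, i.e. $n = pr$; and, using $\ord_Q(df_0) = -m_Q-1 \ge \ord_Q(dg)$, we get $\ord_Q(df) = \ord_Q(df_0 - dg) \ge \ord_Q(dg) \ge -r-1$, so $\ell \le r$ and again $p\ell \le n$, a contradiction. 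With $\ord_Q(dg) \ge -m_Q > -m_Q - 1 = \ord_Q(df_0)$ there is no cancellation in $df = df_0 - dg$, so $\ord_Q(df) = -m_Q - 1$ and $m_Q = -\ord_Q(df) - 1$, as claimed.

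I expect the only real subtlety to be the bookkeeping with pole orders: making sure that ``$\ord_Q(dg)$ small'' really does force $g$ to have a pole of order $\ge m_Q$ (this uses $\ord_Q(dh) \ge \ord_Q(h) - 1$, with equality iff $p\nmid\ord_Q(h)$) and that in the sum $f_0 + \wp(g)$ the term $g^p$, whose pole order is $p$ times that of $g$, always dominates $f_0$ in the relevant regimes --- so that the two ``bad'' scenarios both collapse to the single inequality $p\ell \le n$. Once this is organized the rest is formal; beyond \eqref{eqn:mQ_formula} and the elementary order estimate above, no further input is needed.
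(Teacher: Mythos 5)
Your proposal is correct and follows essentially the same route as the paper: both decompose $f=\wp(g)+h$ with $h$ (your $f_0$) in local standard form at $Q$, use $\ord_Q(dg)\ge \ord_Q(g)-1$ together with the hypothesis~\eqref{eqn:assumption_to_compute_mQ} to exclude cancellation in $df$, and conclude $\ord_Q(df)=-m_Q-1$. The only difference is organizational: the paper notes that $g$ can be chosen with $\ord_Q(g)=\ord_Q(f)/p$, which gives $\ord_Q(dg)>\ord_Q(df)$ in one line, whereas you reach the same non-cancellation through your two contradiction cases.
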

\begin{proof}
	Note that there exist $g, h \in k(Y)$ such that $f = \wp(g) + h$, $\ord_Q(g) = \ord_Q(f)/p$ and:
	\[ \ord_Q(h) = 
	\begin{cases}
		-m_Q, & \textrm{ if } m_Q \ge 1,\\
		\ge 0, & \textrm{ if } m_Q = 0.
	\end{cases}
	\]
	Then $df + dg = dh$. Observe that $\ord_Q(dg) \ge \ord_Q(g) - 1 = \frac{1}{p} \ord_Q(f) - 1 > \ord_Q(df)$
	and hence:
	\[
	-m_Q - 1 = \ord_Q(dh) = \min(\ord_Q(df), \ord_Q(dg)) =  \ord_Q(df). \qedhere
	\]
\end{proof}

\subsection{Global standard form} \label{subsec:gsf}
The Artin--Schreier generator $y$ is said to be in \bb{global standard form}, if it
is in local standard form at every $Q \in Y(k)$ and $y^p - y \not \in k$ (i.e. $\pi$ is not the trivial disconnected cover $\bigsqcup_{\ZZ/p} Y$). In this situation we say also that $\pi$ has a global standard form. The following result explains our interest in this notion.
\begin{Lemma} \label{lem:gsf_gives_me_for_Zp}
	Suppose that $y$ is an Artin--Schreier generator in global standard form for~$\pi$.
	Then $z := y^{p-1}$ is a magical element for
	$\pi$ and the dual of $z$ with respect to the trace pairing is $z^{\vee} := z - 2$.
\end{Lemma}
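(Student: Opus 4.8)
The plan is to verify the two conditions~\ref{enum:A} and~\ref{enum:B} defining a magical element and then to identify the dual. Condition~\ref{enum:A} is automatic, since every subgroup of $G=\ZZ/p$ is normal. Throughout I use that $\{1,y,\dots,y^{p-1}\}$ is a $k(Y)$-basis of $k(X)$ and that $\sigma^i(y)=y+i$. For the pole bound in~\ref{enum:B}, fix $P\in X(k)$ and put $Q=\pi(P)$. If $f$ is regular at $Q$, then $y$ is integral over $\mc O_{Y,Q}$, so $\ord_P(y)\ge 0$ and $\ord_P(z)\ge 0\ge -d_P'$. If $f$ has a pole at $Q$, then local standard form and~\eqref{eqn:mQ_formula} give $\ord_Q(f)=-m_Q$ with $p\nmid m_Q$; comparing orders in $y^p-y=f$ — where the pole of $y^p$ strictly dominates that of $y$ — forces $\ord_P(y)=-m_Q$, while $d_P'=d_P-(\#G_{P,0}-1)=(m_Q+1)(p-1)-(p-1)=m_Q(p-1)$. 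Hence $\ord_P(z)=-(p-1)m_Q=-d_P'$. Thus $\ord_P(z)\ge -d_P'$ in all cases.

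The remaining part of~\ref{enum:B}, namely $\tr_{X/Y}(z)\ne 0$, follows from the trace formula
\[
\tr_{X/Y}(y^m)\;=\;\sum_{i\in\FF_p}(y+i)^m\;=\;
\begin{cases}
-1, & \text{if } (p-1)\mid m \text{ and } m\ge 1,\\
0, & \text{otherwise,}
\end{cases}
\qquad\text{for } 0\le m\le 2(p-1).
\]
To prove this I would expand $(y+i)^m=\sum_l\binom{m}{l}i^{m-l}y^l$, sum over $i\in\FF_p$, and use that $\sum_{i\in\FF_p}i^t$ equals $-1$ when $(p-1)\mid t$ and $t\ge 1$, and $0$ otherwise; in the stated range the only surviving contributions have $m-l\in\{p-1,2(p-1)\}$, and for $l>0$ the coefficient $\binom{m}{l}$ then vanishes modulo $p$ by Lucas' theorem. (Equivalently, one may use Newton's identities for the minimal polynomial $T^p-T-f$ of $y$, whose elementary symmetric functions of the roots satisfy $e_1=\dots=e_{p-2}=0$ and $e_{p-1}=-1$.) In particular $\tr_{X/Y}(z)=\tr_{X/Y}(y^{p-1})=-1\ne 0$, so $z$ is a magical element; moreover, by Proposition~\ref{prop:g(z)_is_a_basis}, $\{\sigma^i(z)\}$ is a $k(Y)$-basis of $k(X)$ admitting a dual basis of the form $\{\sigma^i(z^{\vee})\}$ for a unique $z^{\vee}\in k(X)$.

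To identify $z^{\vee}$ with $z-2=y^{p-1}-2$: by $G$-equivariance of the trace, $\tr_{X/Y}(\sigma^i(z)\cdot\sigma^j(w))=\tr_{X/Y}(z\cdot\sigma^{j-i}(w))$, so by~\eqref{eqn:def_of_dual_elt} it suffices to show that $\tr_{X/Y}(y^{p-1}((y+n)^{p-1}-2))$ equals $1$ for $n\equiv 0\pmod p$ and $0$ otherwise, with $n\in\{0,\dots,p-1\}$. From $\binom{p-1}{l}\equiv(-1)^l\pmod p$ we get $(y+n)^{p-1}=\sum_{l=0}^{p-1}(-1)^l n^{p-1-l}y^l$, hence $y^{p-1}(y+n)^{p-1}=\sum_{l=0}^{p-1}(-1)^l n^{p-1-l}y^{p-1+l}$; by the trace formula above only $l=0$ and $l=p-1$ contribute, giving $\tr_{X/Y}(y^{p-1}(y+n)^{p-1})=-n^{p-1}-1$. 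Therefore
\[
\tr_{X/Y}(y^{p-1}((y+n)^{p-1}-2))=(-n^{p-1}-1)-2\,\tr_{X/Y}(y^{p-1})=1-n^{p-1},
\]
which is $1$ for $n=0$ and $0$ for $n\not\equiv 0\pmod p$ by Fermat's little theorem; by uniqueness of the dual basis, $z^{\vee}=z-2$.

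The main obstacle is the trace formula for $y^m$ on the range $0\le m\le 2(p-1)$: one must pin down exactly which exponents yield a nonzero — hence necessarily constant — trace, and this is precisely where Lucas' theorem, or equivalently the shape of the minimal polynomial of $y$, enters. Everything else is a short valuation computation or follows formally from $G$-equivariance and Fermat's little theorem. One should also keep a mild eye on $p=2$, where $z^{\vee}=z$ and several signs collapse, but the arguments above go through verbatim.
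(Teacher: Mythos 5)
Your proof is correct and follows essentially the same route as the paper: a valuation computation giving $\ord_P(z)=-(p-1)m_Q=-d_P'$ at ramified points (and $\ord_P(z)\ge 0$ elsewhere), together with the trace identity for $\tr_{X/Y}(y^i)$, $0\le i\le 2(p-1)$, from which both $\tr_{X/Y}(z)\neq 0$ and the duality relation $\tr_{X/Y}\bigl(z\cdot\sigma^n(z-2)\bigr)=1-n^{p-1}$ follow. The only difference is one of self-containedness: the paper quotes Madden's result for the trace formula and leaves the final duality computation implicit, whereas you derive the trace formula from power sums and Lucas' theorem and carry out the dual-basis check explicitly; both derivations are correct.
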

\begin{proof}
	Note that $\ord_P(z) = (p-1) \cdot \ord_{P}(y) = - (p-1) \cdot m_P = -d_P'$
	for every $P \in R$. Moreover, by \cite[\S 3, Proposition 3]{Madden_Arithmetic_generalized} for $i \le 2 \cdot (p-1)$ we have:
	\begin{equation} \label{eqn:AS_trace_of_yi}
			\tr_{X/Y}(y^i) =
		\begin{cases}
			0, & \textrm{ if } i = 0 \textrm{ or } (p-1) \nmid i.\\
			-1, & \textrm{ else.}
		\end{cases}
	\end{equation}
	This allows us to conclude that $\tr_{X/Y}(z) \neq 0$ and $z^{\vee}$ is the dual element to~$z$.
\end{proof}

Not every $\ZZ/p$-cover has a global standard form. For example,
connected \'{e}tale $\ZZ/p$-covers do not have a global standard form (cf.~\cite[Subsection~3.1]{WardMarques_HoloDiffs}). We present another example in Subsection~\ref{subsec:no_magical_element}.
It turns out that every sufficiently ramified $\ZZ/p$-cover has a global standard form.
\begin{Lemma} \label{lem:criterion_for_gsf}
	Suppose that there exists a point $Q_0 \in Y(k)$ with $m_{Q_0} > 2g_Y \cdot p$. Then the cover $\pi$
	has a global standard form.
\end{Lemma}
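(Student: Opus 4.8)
The goal is to show that if some point $Q_0 \in Y(k)$ is sufficiently ramified (namely $m_{Q_0} > 2 g_Y \cdot p$), then $\pi$ admits a global standard form. The obstruction to having a global standard form is that changing the Artin--Schreier generator $y$ to $y - g$ to fix the local standard form at one point may destroy the local standard form at another point. So the strategy is: start with an Artin--Schreier generator $y$ (with equation $y^p - y = f$) that is already in local standard form at \emph{every} point except possibly at $Q_0$, and then observe that at $Q_0$ the local standard form condition is automatically satisfied for ramification reasons. The key ingredient is a Riemann--Roch / Weierstrass-gap type estimate: if $Q_0$ is a pole of $f$ of order divisible by $p$, we would want to subtract $\wp(g)$ with $g$ a function whose only pole is at $Q_0$ of order $\tfrac{1}{p}\ord_{Q_0}(f)$; such a $g$ exists as soon as this order exceeds $2 g_Y - 1$ (so that $\ell(n \cdot Q_0) = n + 1 - g_Y$ grows with $n$), and the hypothesis $m_{Q_0} > 2 g_Y p$ guarantees enough room.

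\textbf{Step 1.} First I would reduce to local standard form at all points other than $Q_0$. By the remark following the definition of local standard form, for each $Q \in Y(k)$ one can modify $y$ (replacing $y$ by $y - g_Q$ and $f$ by $f - \wp(g_Q)$, with $g_Q$ a local object at $Q$) to achieve local standard form at $Q$. Since $f$ has only finitely many poles, only finitely many such corrections are needed, and a correction at $Q$ only changes $f$ near $Q$; so after finitely many steps we obtain an Artin--Schreier generator $y$ in local standard form at every point of $Y(k) \setminus \{Q_0\}$. (If after this $y$ happens to also be in local standard form at $Q_0$, and $y^p - y \notin k$, we are done.)

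\textbf{Step 2.} Now suppose $y$ is \emph{not} in local standard form at $Q_0$: that is, $f$ has a pole at $Q_0$ of order $p \cdot s$ for some $s \ge 1$. I claim we can repeatedly subtract $\wp(g)$ to reduce this pole order. Indeed, consider $n := \ord_{Q_0}(f)^{-}= ps$ (the pole order). Since the cover is connected and $m_{Q_0}$ is large, $f$ genuinely has a pole at $Q_0$, and the point is to find $g \in k(Y)$ with a single pole at $Q_0$ of order $s$ so that $\wp(g) = g^p - g$ has pole order $ps$ at $Q_0$ and no new poles; then $f - \wp(g)$ has strictly smaller pole order at $Q_0$ and its pole set is unchanged away from $Q_0$. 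Such a $g$ exists provided $\ell(s \cdot Q_0) > \ell((s-1)\cdot Q_0)$, i.e. $s$ is a non-gap at $Q_0$; by Riemann--Roch this certainly holds once $s \ge 2 g_Y$ (equivalently $ps \ge 2 g_Y p$), which is implied by $n = ps = d_{Q_0}'/\text{(something)}$ being large — here is where $m_{Q_0} > 2 g_Y p$ enters, via $n \ge m_{Q_0} + 1 > 2g_Y p$, so $s > 2 g_Y$. Iterating, after finitely many steps the pole order at $Q_0$ drops below the threshold where it must be $\equiv 0 \pmod p$ only by being a non-$p$-divisible number or $0$: more carefully, each iteration reduces $s$ by at least $1$ while keeping $ps \ge$ the relevant bound until $s$ is small; we stop the first time the pole order at $Q_0$ is not divisible by $p$ (then local standard form holds at $Q_0$) or becomes non-positive. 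One must check that these corrections, all supported at $Q_0$, do not spoil local standard form at the other points — which is clear since $g$ has poles only at $Q_0$.

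\textbf{Step 3 (the subtle point).} The delicate part is ensuring the process terminates at local standard form at $Q_0$ and not, say, at an étale cover ($f$ regular everywhere, $y^p - y \in k$). Here the hypothesis $m_{Q_0} > 2 g_Y p > 0$ is essential: $m_{Q_0} > 0$ forces $Q_0$ to be genuinely ramified, so in any local standard form at $Q_0$ one has $\ord_{Q_0}(f) < 0$ with $|\ord_{Q_0}(f)| = m_{Q_0}$ by \eqref{eqn:mQ_formula}; in particular $f$ cannot be reduced to a regular function everywhere, so $y^p - y \notin k$, and the cover is connected. The main obstacle I anticipate is bookkeeping in Step 2: one must verify that at each reduction step the non-$p$-divisibility or the Riemann--Roch inequality $\ell(s \cdot Q_0) > \ell((s-1) \cdot Q_0)$ holds — the cleanest way is to note that the \emph{smallest} achievable pole order at $Q_0$ among all Artin--Schreier representatives equals $m_{Q_0}$ (this is essentially the content of \eqref{eqn:mQ_formula}), and $m_{Q_0}$ is not divisible by $p$ (since $p \nmid m_{X/Y,Q}$ for a ramified Artin--Schreier point — a standard fact about the jump in the ramification filtration), so the descent necessarily lands on local standard form at $Q_0$. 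Combining Steps 1--3 gives an Artin--Schreier generator in local standard form at every point with $y^p - y \notin k$, i.e. a global standard form, which is what was to be shown. $\qed$
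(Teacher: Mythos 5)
There is a genuine gap, and it sits exactly where the hypothesis $m_{Q_0} > 2g_Y \cdot p$ has to be used. Your Step 1 claims that one can reach local standard form at every point of $Y(k)\setminus\{Q_0\}$ by corrections $y \mapsto y - g_Q$ with ``$g_Q$ a local object at $Q$'' that ``only changes $f$ near $Q$''. But $g_Q$ must be a global rational function on $Y$, and a function with a pole at $Q$ of the required order $j/p$ and no other poles need not exist: if $j/p$ is a Weierstrass gap at $Q$ (for instance any order below the gonality), there is none. If instead you allow $g_Q$ auxiliary poles (e.g.\ take a power of a uniformizer, as in the paper's remark, which only yields local standard form at the \emph{single} point $Q$), then $\wp(g_Q)=g_Q^p-g_Q$ acquires poles of order divisible by $p$ at every auxiliary pole, creating new bad points; at a point where $f$ was regular or mildly ramified this destroys local standard form and you are back where you started. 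This is precisely the phenomenon behind the example of Subsection~\ref{subsec:no_magical_element}, where a $p$-divisible pole of order $Mp$ with $M$ below the gonality can never be removed, so ``fix each point by a correction supported near that point'' is not a valid move in general. The paper's proof handles this by first putting $y$ in local standard form at $Q_0$ (trivial, one point), and then, for each bad point $Q$, choosing via Riemann--Roch a correction $g$ with pole divisor bounded by $\tfrac{j}{p}\,Q + 2g_Y\,Q_0$: the auxiliary pole is deliberately dumped at $Q_0$, and the hypothesis $m_{Q_0} > 2g_Y\cdot p$ is exactly what guarantees that the resulting pole of $\wp(g)$ at $Q_0$, of order $2g_Y\,p$, is strictly smaller than the existing pole of order $m_{Q_0}$ (prime to $p$), so the local standard form at $Q_0$ survives every such correction, while points outside $\{Q,Q_0\}$ are untouched.

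By contrast, Steps 2--3, where you actually spend the hypothesis, address the easy half of the problem: putting $y$ in local standard form at the single point $Q_0$ is always possible and needs no numerical assumption. (Your descent at $Q_0$ is essentially correct as far as it goes: if $f$ has a pole of order $n$ at $Q_0$ with $p\mid n$, then $n > m_{Q_0} > 2g_Y p$, so $s = n/p > 2g_Y$ is a non-gap, a function with pole only at $Q_0$ of exact order $s$ exists, and a suitable constant multiple of $\wp(g)$ strictly lowers the pole order; moreover $p\nmid m_{Q_0}$, so the descent terminates in local standard form. But this only treats $Q_0$.) Without a valid Step 1 the argument does not prove the lemma; to repair it you must route the auxiliary poles of the corrections at the points $Q\neq Q_0$ through $Q_0$, which is the content of the paper's proof and the true role of the assumption $m_{Q_0} > 2g_Y\cdot p$.
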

\begin{proof} %!S
	Let $y$ be an Artin--Schreier generator for $\pi$ in local standard form at $Q_0$ and let $y^p - y = f$.
	Suppose that $\ord_Q(f) = -j < 0$, $p|j$ for some $Q \in Y(k)$. Let $D := 2 g_Y \cdot Q_0 + \frac{j}{p} \cdot Q \in \Divv(Y)$. Using Riemann--Roch theorem, one concludes that
	\[
	H^0(Y, \mc O_Y(D)) \setminus \bigg( H^0(Y, \mc O_Y(D - Q_0)) \cup H^0(Y, \mc O_Y(D - Q)) \bigg) \neq \varnothing,
	\]
	(note that a vector space over an infinite field cannot be a union of two codimension~$1$ subspaces). Therefore we may choose a function
	$g \in k(Y)$ such that:
	\begin{itemize}
		\item $\ord_{Q_0}(g) = -2g_Y$,
		\item $\ord_Q(g) = -j/p$,
		\item $\ord_{Q'}(g) \ge 0$ for $Q' \neq Q, Q_0$.
	\end{itemize}
	Note that there exists $c \in k$ such that $\ord_Q(f - \wp(c \cdot g)) > -j$. Let $y_1 := y - c \cdot g$, $f_1 := f - \wp(c \cdot g)$. Then $y_1$ is an Artin--Schreier generator
	of $\pi$. Moreover:
	\begin{itemize}
		\item $y_1$ is in local standard form at $Q_0$ -- indeed, $\ord_{Q_0}(f_1) = \ord_{Q_0}(f)$, since $\ord_{Q_0}(\wp(g)) = -2g_Y \cdot p > - m_{Q_0}$,
		
		\item if $y$ is in local standard form at $Q' \neq Q, Q_0$,
		then $y_1$ is also in local standard form at $Q'$,
		
		\item $\ord_Q(f_1) > \ord_Q(f)$.
	\end{itemize}
	Thus, by repeatedly replacing $(y, f)$ by $(y_1, f_1)$, we eventually obtain an Artin--Schreier generator of $\pi$ in global standard form.
\end{proof}
\subsection{Proof of Corollary~\ref{cor:cohomology_of_Zp}} \label{sec:Zp}
In this subsection we apply Theorem~\ref{thm:main_thm} to prove Corollary~\ref{cor:cohomology_of_Zp}.
Assume that $y$ is in global standard form.
By Lemma~\ref{lem:gsf_gives_me_for_Zp}, $z := y^{p-1}$ is a magical element for $\pi$ and we may take $z^{\vee} = z - 2$. Recall that every finitely dimensional indecomposable $k[G]$-module is of the form $J_i = k[x]/(x - 1)^i$ for some $i = 1, \ldots, p$, where $\sigma$ acts by multiplication by $x$ (cf. \cite[Theorem 12.1.5]{DummitFoote2004}).
\begin{Lemma} \label{lem:Ji_and_Span_yi}
	For any $0 \le i \le p-1$ the $k[G]$-module $\Span_k(1, y, \ldots, y^i)$ is generated by $y^i$ and is isomorphic to $J_{i+1}$.
\end{Lemma}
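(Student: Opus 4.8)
The plan is to show directly that $\sigma-1$ acts on the space $W_i := \Span_k(1, y, \ldots, y^i)$ as a nilpotent operator whose single Jordan block has size $i+1$, which is exactly the defining property of $J_{i+1}$. First I would check that $W_i$ is $G$-stable: since $\sigma(y) = y+1$, we have $\sigma(y^j) = (y+1)^j = \sum_{\ell=0}^{j}\binom{j}{\ell}y^\ell \in W_i$ for every $j \le i$, so $\sigma$ (hence all of $k[G]$) preserves $W_i$. The same computation shows $(\sigma - 1)(y^j) = (y+1)^j - y^j$ has degree exactly $j-1$ in $y$ (the leading term being $j y^{j-1}$), \emph{provided} $j \le p-1$ so that the binomial coefficient $j = \binom{j}{j-1}$ is nonzero in $k$; this is where the hypothesis $i \le p-1$ is used.

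From this it follows that $(\sigma-1)$ maps $W_j$ onto $W_{j-1}$ for each $1 \le j \le i$, and in particular $(\sigma-1)^i(y^i)$ is a nonzero scalar (namely $i!$, which is a unit in $k$ since $i \le p-1$) while $(\sigma-1)^{i+1}(y^i) = 0$. Hence the vectors $y^i, (\sigma-1)y^i, \ldots, (\sigma-1)^i y^i$ are linearly independent — they have strictly decreasing degrees in $y$ — and they span $W_i$, which has dimension $i+1$. Therefore $W_i$ is a cyclic $k[G]$-module generated by $y^i$ on which $\sigma - 1$ is a single nilpotent Jordan block of size $i+1$; by the classification of indecomposable $k[\ZZ/p]$-modules recalled just before the lemma ($J_{i+1} = k[x]/(x-1)^{i+1}$), this gives $W_i \cong J_{i+1}$ as $k[G]$-modules.

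The only genuinely delicate point is the role of the characteristic: one must not accidentally use $i \le p-1$ merely to ensure $y^i$ makes sense, but rather to guarantee that the binomial coefficients $\binom{j}{j-1} = j$ for $j = 1, \ldots, i$ are invertible in $k$, so that $(\sigma-1)$ genuinely drops the degree by exactly one at each step rather than by more. I expect this to be the main (and essentially only) obstacle; everything else is a routine degree count. It is worth noting explicitly that $\{1, y, \ldots, y^i\}$ is indeed a $k$-basis of $W_i$ — this follows because $y$ is transcendental over $k$ (equivalently, $y \notin k$, as $\pi$ is connected and nontrivial since $y$ is in global standard form), so $1, y, y^2, \ldots$ are $k$-linearly independent in $k(X)$.
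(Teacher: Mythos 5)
Your proof is correct, and it takes a genuinely different computational route from the paper. The paper expands the whole $\sigma$-orbit, $\sigma^j(y^i) = \sum_{l=0}^i \binom{i}{l} j^{i-l} y^l$, and inverts this system via a Vandermonde-type determinant, $\det[\binom{i}{l} j^{i-l}] = \prod_l \binom{i}{l} \cdot \prod_{j_1 < j_2}(j_2 - j_1) \not\equiv 0 \pmod p$, to conclude that each $y^l$ lies in $\Span_k(\sigma^j(y^i) : 0 \le j \le i)$; it then deduces the isomorphism with $J_{i+1}$ from the fact that the module is cyclic of dimension $i+1$ over $k[G] \cong k[x]/(x-1)^p$. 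You instead analyse the single operator $\sigma - 1$, observing that it lowers the degree in $y$ by exactly one (leading coefficient $j$ on $y^j$, nonzero since $j \le p-1$), so that $(\sigma-1)^i y^i = i! \ne 0$ while $(\sigma-1)^{i+1} y^i = 0$, and the iterates $(\sigma-1)^j y^i$ for $0 \le j \le i$ have strictly decreasing degrees and hence form a basis of $\Span_k(1, y, \ldots, y^i)$. Your route avoids the determinant computation entirely and exhibits the Jordan block of size $i+1$ directly, which makes the identification with $J_{i+1}$ immediate rather than an appeal to cyclicity plus a dimension count (and incidentally sidesteps the paper's slightly loose wording, where the module is called ``irreducible'' when ``indecomposable'' is meant); the paper's Vandermonde argument, on the other hand, gives the explicit change of basis between $\{y^l\}$ and the orbit $\{\sigma^j(y^i)\}$, which is occasionally useful in its own right. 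Your attention to the two places where $i \le p-1$ is needed (invertibility of the coefficients $j$ and of $i!$) and to the linear independence of $1, y, \ldots, y^i$ is exactly the right bookkeeping.
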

\begin{proof}
	For every $j = 0, \ldots, i$ we have: $\sigma^j(y^i) = \sum_{l = 0}^i {i \choose l} j^{i-l} \cdot y^l$. We treat this as a system of equations with $y^l$ as unknowns. Using Vandermonte's determinant we have:
	\[
	\det \left[{i \choose l} j^{i-l} \right]_{0 \le j, l \le i} = \prod_{l = 0}^i {i \choose l} \cdot \prod_{0 \le j_1 < j_2 \le i} (j_2 - j_1) \not \equiv 0 \pmod p.
	\]
	This easily implies that $y^l \in \Span_k(\sigma^j(y^i) : j = 0, \ldots, i)$
	for $l = 0, \ldots i$. Thus $\Span_k(1, y, \ldots, y^i)$ is generated as a $k[G]$-module by
	$y^i$ and is an irreducible $k[G]$-module of dimension $i+1$.
\end{proof}
\noindent Lemma~\ref{lem:Ji_and_Span_yi} implies in particular that:
\begin{equation} \label{eqn:span_gz}
	\Span_k(1, y, \ldots, y^{p-1}) = \Span_k(g^*(z) : g \in G) = \Span_k(g^*(z^{\vee}) : g \in G)
\end{equation}
(for the second equality note that $\tr(z) = \tr(z^{\vee}) = - 1$ yields $z^{\vee} = z-2 \in \Span_k(g^*(z) : g \in G)$ and $z \in \Span_k(g^*(z^{\vee}) : g \in G)$).

Fix $Q \in B$ and denote $m := m_Q$, $\pi^{-1}(Q) = \{ P \}$.
In order to prove Corollary~\ref{cor:cohomology_of_Zp}, it suffices to compute bases of the modules $H^0_Q$, $H^1_Q$, $H^1_{dR, Q}$, defined by~\eqref{eqn:H0Q}, \eqref{eqn:H1Q} and~\eqref{eqn:H1dRQ} respectively.
To this end, we need to pick an appropriate uniformizer at~$Q$. Using the Hensel's lemma for the polynomial $T^m - \frac 1 f \in \mc O_{Y, Q}[T]$, we can choose $t \in k(Y)$ such that:
\begin{equation} \label{eqn:suitable_uniformizer}
	\frac{1}{t^m} \equiv y^p - y \pmod{\mf m_{Y, Q}^{2m}}.
\end{equation}
\begin{Proposition} \label{prop:H0Q_for_Zp}
	Keep the above setup.
	\begin{enumerate}[(1)]
		\item The elements:
		\[
		y^i \frac{dt}{t^j} \in \Omega_{k(X)},
		\]
		where $0 \le i \le p-2$, $2 \le j$, $mi + pj \le (m+1) \cdot (p-1) + 1$, belong to $\Omega_X(R)_Q$. Their images form a basis of $H^0_Q$.
		\item The elements:
		\[
		y^{i} \cdot t^{j} \in k(X), 
		\]
		where $1 \le i \le p-1$, $1 \le j$, $-mi + pj < 1$, belong to $\bigoplus_{g \in G} g^*(z^{\vee}) \mc O_X(-R)_Q$. Their images form a basis of $H^1_Q$.
		\item The elements:
		\[
		\left(\frac{y^i \, dt}{t^j}, \frac{1}{(i+1) \cdot m} y^{i+1} t^{m+1 - j} \right) \in \Omega_{k(X)} \times k(X),
		\]
		where $0 \le i \le p-2$, $2 \le j \le m$, belong to $\Omega_{k(X)} \times \bigoplus_{g \in G} g^*(z^{\vee}) \mc O_X(-R)_Q$. Their images form a basis of $H^1_{dR, Q}$.
	\end{enumerate}
\end{Proposition}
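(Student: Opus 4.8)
## Proof proposal

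The plan is to prove all three parts by explicit computation in the completed local ring $\wh{\mc O}_{X, P}$, using the suitable uniformizer $t$ from~\eqref{eqn:suitable_uniformizer} and the description of the relevant modules in terms of orders of vanishing. The first task is to record the local picture at $P$: since $y^p - y = f$ with $\ord_Q(f) = -m$ and $f$ in local standard form, we have $\ord_P(y) = -m$, the uniformizer $t$ at $Q$ pulls back to a function with $\ord_P(t) = p$, and $dt/t$ has $\ord_P = d_P = (m+1)(p-1)$ shifted appropriately; more precisely, from~\eqref{eqn:suitable_uniformizer} one gets $\ord_P(y^p t^m - 1) \ge$ something large, which pins down the leading behaviour of $y$ in terms of $t$. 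The key numerology is: $\ord_P(y^i\, dt/t^j) = -mi + (p-1)\cdot 0 + \ldots$ — one computes this via~\eqref{eqn:valuation_of_diff_form}, getting $\ord_P\left(y^i \tfrac{dt}{t^j}\right) = -mi - pj + (m+1)(p-1) + (p - 1)$, wait — more carefully $\ord_P(dt) = e_P\cdot(-1) + d_P$ so $\ord_P(y^i dt/t^j) = -mi + p(-j) + (p-1) + d_P - p = -mi - pj + (m+1)(p-1)$. The condition $mi + pj \le (m+1)(p-1)+1$ is then exactly the statement $\ord_P(\,\cdot\,) \ge -1$, i.e. membership in $\Omega_X(R)_Q$ (recall $R = \{P\}$ locally, reduced).

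\textbf{Part (1).} First I would check these elements lie in $\Omega_X(R)_Q$ by the order computation just sketched. Next, by~\eqref{eqn:H0Q} I must show their images span $H^0_Q = \Omega_X(R)_Q / \bigoplus_g g^*(z)\Omega_Y(B)_Q$ and are linearly independent there. For independence: the monomials $y^i t^{-j'}$ ($0\le i\le p-1$) have pairwise distinct $P$-adic valuations modulo the subgroup generated by $p$ and $m$ (since $\gcd(m,p)=1$ as $f$ is in local standard form), so no nontrivial combination can land in $\bigoplus_g g^*(z)\Omega_Y(B)_Q$, whose elements are controlled by Lemma~\ref{lem:inclusions_of_modules}. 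For spanning: an arbitrary element of $\Omega_X(R)_Q$ can be written, using~\eqref{eqn:span_gz}, as $\sum_{i=0}^{p-1} y^i \omega_i$ with $\omega_i \in \Omega_{k(Y)}$; expand each $\omega_i$ in the basis $\{t^{-j}\,dt\}$ of $\Omega_{k(Y)}$ localized at $Q$, and observe that the terms with $\ord \ge 0$ or with $i = p-1$ lie in $\bigoplus_g g^*(z)\Omega_Y(B)_Q$ and hence vanish in $H^0_Q$; the surviving terms are precisely those listed. A dimension count against Lemma~\ref{lem:properties_H0Q_H1Q}(3) (with $d_Q'' = (m-1)(p-1)$, so $\tfrac12 d_Q''\cdot 1 = \tfrac{(m-1)(p-1)}{2}$) confirms the count of lattice points $\{(i,j): 0\le i\le p-2,\ 2\le j,\ mi+pj\le (m+1)(p-1)+1\}$.

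\textbf{Part (2).} This should follow from part (1) by the duality of Lemma~\ref{lem:properties_H0Q_H1Q}(1): the pairing $(\omega, f)\mapsto \sum_g \res_Q(\omega_g f_g)$ is perfect, so a basis of $H^1_Q$ is obtained as a dual basis to the one in (1). Concretely, one verifies that $y^i t^j$ lies in $\bigoplus_g g^*(z^\vee)\mc O_X(-R)_Q$ by computing $\ord_P(y^i t^j) = -mi + pj$ and using $\ord_P \ge 1$, i.e. $-mi + pj \ge 1$, matching the stated inequality $-mi+pj < 1$ up to the convention on which side; and one checks that the pairing matrix between the list in (1) and the list in (2) is triangular/invertible, again by the distinct-valuations argument. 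Alternatively one can run the spanning argument directly as in (1), using Lemma~\ref{lem:inclusions_of_modules2} and~\eqref{eqn:H1Q}.

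\textbf{Part (3).} For the de Rham piece, defined by~\eqref{eqn:H1dRQ}, I would verify three things. First, that each pair $\left(\tfrac{y^i\,dt}{t^j},\ \tfrac{1}{(i+1)m} y^{i+1} t^{m+1-j}\right)$ satisfies the compatibility $\omega_Q - d\nu_Q \in \Omega_X(R)_Q$: here one computes $d\left(y^{i+1} t^{m+1-j}\right) = (i+1) y^i\,dy\cdot t^{m+1-j} + (m+1-j) y^{i+1} t^{m-j}\,dt$, and using $dy \approx$ (leading term from differentiating $y^p - y = f = t^{-m} + \ldots$, giving $dy = -m t^{-m-1}dt/(\ldots) + \ldots$, more precisely $p y^{p-1}dy - dy = df$ so $dy = -df = m t^{-m-1} dt + \ldots$ modulo higher order), the leading term of $(i+1)y^i\, dy\, t^{m+1-j}$ is $(i+1) m\, y^i t^{-j}\, dt$, which cancels $(i+1)m$ times the first coordinate up to lower-order (hence more regular) terms — this is exactly why the normalizing constant $\tfrac{1}{(i+1)m}$ appears and why we need $i \le p-2$ and $m \ne 0$ in $k$. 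Second, that $\nu_Q = \tfrac{1}{(i+1)m}y^{i+1}t^{m+1-j} \in \bigoplus_g g^*(z^\vee)\mc O_X(-R)_Q$: by part (2)'s order bound, $\ord_P(y^{i+1} t^{m+1-j}) = -m(i+1) + p(m+1-j) \ge 1$ exactly when $2\le j\le m$, giving the stated range. Third, that the images form a basis of $H^1_{dR,Q}$: by Lemma~\ref{lem:properties_IXY_H1dRQ} there is an exact sequence $0\to H^0_Q\to H^1_{dR,Q}\to H^1_Q\to 0$, so $\dim_k H^1_{dR,Q} = \dim_k H^0_Q + \dim_k H^1_Q = (m-1)(p-1)$; the number of pairs $\{(i,j): 0\le i\le p-2,\ 2\le j\le m\}$ is $(p-1)(m-1)$, matching. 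For linear independence one uses that the pairs map under $H^1_{dR,Q}\to H^1_Q$ to (nonzero multiples of) the basis elements $y^{i+1}t^{m+1-j}$ of part (2) — after reindexing — together with the injectivity of $H^0_Q\hookrightarrow H^1_{dR,Q}$ handling the kernel.

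\emph{Main obstacle.} The delicate point is Part (3): controlling $dy$ and $d(y^{i+1}t^{m+1-j})$ precisely enough to see that $\omega_Q - d\nu_Q$ genuinely lands in $\Omega_X(R)_Q$ and not merely in some larger space. One must track that the correction terms produced by differentiating $t^{m+1-j}$ (and the error between $f$ and $t^{-m}$ coming from~\eqref{eqn:suitable_uniformizer}, valid only mod $\mf m_{Y,Q}^{2m}$) have strictly larger $P$-adic order than the main term, so that after subtracting the normalized leading part what remains is regular enough. Getting the inequalities in~\eqref{eqn:suitable_uniformizer} to propagate correctly through the differentiation — in particular checking $2m$ is a sufficient precision — is where the real work lies; everything else reduces to the coprimality of $m$ and $p$ and the resulting separation of valuations.
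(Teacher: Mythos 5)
Your overall strategy is the same as the paper's: compute everything locally in terms of the monomials $y^i$, use the valuation separation coming from $\gcd(m,p)=1$, use the estimate on $dy$ coming from \eqref{eqn:suitable_uniformizer}, and finish part (3) with the exact sequence of Lemma~\ref{lem:properties_IXY_H1dRQ} plus a dimension count. Part (1) is essentially correct. However, parts (2) and (3) contain a genuine error about the role of the valuation inequalities. In (2), the condition $-mi+pj<1$ is not a membership condition to be checked ``up to the convention on which side'': membership of $y^i t^j$ in the ambient module (which must be read as $\bigoplus_{g} g^*(z^{\vee})\mc O_Y(-B)_Q=\bigoplus_i y^i\,\mc O_Y(-B)_Q$, as in \eqref{eqn:H1Q} and \eqref{eqn:span_gz}, for images in $H^1_Q$ to make sense) is automatic from $j\ge 1$, while the inequality $-mi+pj<1$ says precisely that $\ord_P(y^it^j)\le 0$, i.e.\ the element is \emph{not} in $\mc O_X(-R)_Q$ and hence is nonzero in the quotient. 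Your attempted verification of $\ord_P\ge 1$ is the opposite statement and is false. The same confusion becomes fatal in (3): the claim that $\ord_P(y^{i+1}t^{m+1-j})=-m(i+1)+p(m+1-j)\ge 1$ ``exactly when $2\le j\le m$'' is false (take $i=p-2$, $j=m$: the order is $p-m(p-1)<1$ once $m\ge 2$), and if it were true every second coordinate would vanish in $H^1_Q$, so your pairs could not surject onto $H^1_Q$ and your own basis argument would collapse, since the image of $H^0_Q$ has only half the required dimension. What is actually needed, and what the paper supplies via the identity $j_{max}(i)+j_{min}(i+1)=m+1$ (with $j_{max}(i)=m+1-\lceil m(i+1)/p\rceil$ and $j_{min}(i)=\lceil (mi+1)/p\rceil$), is a dichotomy: for $2\le j\le j_{max}(i)$ the second coordinate is zero in $H^1_Q$ and the pair is the image of the corresponding basis element of $H^0_Q$, while for $j_{max}(i)<j\le m$ the second coordinate is, up to the scalar $\frac{1}{(i+1)m}$, one of the basis elements from part (2); only with this splitting do Lemma~\ref{lem:properties_IXY_H1dRQ} and the count $(p-1)(m-1)$ yield a basis.

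In addition, the step you yourself single out as the ``main obstacle'' — propagating \eqref{eqn:suitable_uniformizer} through differentiation — is exactly the one place in (3) requiring real bookkeeping, and you leave it undone. The paper's route: $y^p-y-t^{-m}\in\mf m_{X,P}^{2mp}$ gives $dy\equiv m\,dt/t^{m+1}\pmod{\mf m_{X,P}^{2mp-1}\Omega_{X,P}}$, and one then checks the two inequalities $(2mp-1)+(-mi+(m+1-j)p)\ge -1$ and $\ord_P\bigl(y^{i+1}t^{m-j}\,dt\bigr)\ge p-1\ge 1$ so that both the error term and the leftover term $-\frac{m+1-j}{(i+1)m}\,y^{i+1}t^{m-j}\,dt$ land in $\Omega_X(R)_Q$. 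So the precision $2m$ does suffice, but your proposal only asserts the shape of this computation. Your part (1) and the alternative ``direct spanning'' route you mention for (2) do coincide with the paper's argument and are fine as sketched.
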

Before the proof, note that for any $\omega \in \Omega_{k(X)}$ there exists a
unique system of forms $\omega_0, \ldots, \omega_{p-1} \in \Omega_{k(Y)}$ such that
$\omega = \omega_0 + y \cdot \omega_1 + \ldots + y^{p-1} \cdot \omega_{p-1}$.
Moreover, one has:
\begin{equation} \label{eqn:valuation_of_omega_Zp}
	\ord_P(\omega) = \min \{ \ord_P(y^i \omega_i) : i = 0, \ldots, p-1 \}.
\end{equation}
Indeed, using~\eqref{eqn:valuation_of_diff_form} we see that for any $0 \le i < j \le p-1$:
\begin{align*}
	\ord_P(y^i \cdot \omega_i) - \ord_P(y^j \cdot \omega_j) &= (-i m + p \cdot \ord_Q(\omega_i) + d_Q) - (-j m + p \cdot \ord_Q(\omega_j) + d_Q)\\
	&= m \cdot(j - i) + p \cdot (\ord_Q(\omega_i) - \ord_Q(\omega_j)) \neq 0,
\end{align*}
since $p \nmid m \cdot (i - j)$. Analogously, any $f \in k(X)$ is of the form
$f = f_0 + y \cdot f_1 + \ldots + y^{p-1} \cdot f_{p-1}$
for some $f_0, \ldots, f_{p-1} \in k(Y)$ and
\begin{equation} \label{eqn:valuation_of_f_Zp}
	\ord_P(f) = \min \{ \ord_P(y^i f_i) : i = 0, \ldots, p-1 \}.
\end{equation}
\begin{proof}[Proof of Proposition~\ref{prop:H0Q_for_Zp}]
	(1) For any $i$, let $j_{max}(i)$ denote the largest integer satisfying
		$mi + pj \le (m+1) \cdot (p-1) + 1$, i.e. $j_{max}(i) = m+1 - \lceil \frac{m (i+1)}{p} \rceil$. By~\eqref{eqn:valuation_of_omega_Zp} and~\eqref{eqn:valuation_of_diff_form}:
		\begin{align*}
			\omega \in \Omega_X(R)_Q &\Leftrightarrow
			\ord_P(y^i \cdot \omega_i) \ge -1 \textrm{ for every } i = 0, \ldots, p-1\\
			&\Leftrightarrow \ord_Q(\omega_i) \ge -j_{max}(i).
		\end{align*}
		Hence, using~\eqref{eqn:span_gz}:
		\begin{equation*}
			H^0_Q = \frac{\Omega_X(R)_Q}{\left(\bigoplus_{g \in G} g^*(z) k \right) \Omega_Y(B)_Q} = \frac{\bigoplus_{i = 0}^{p-1} y^i \cdot t^{-j_{max}(i)} \Omega_{Y,Q}}
			{\bigoplus_{i = 0}^{p-1} y^i \, \Omega_Y(B)_Q} \cong \bigoplus_{i = 0}^{p-1} 
			y^i \cdot \frac{t^{-j_{max}(i)} \Omega_{Y, Q}}{\Omega_Y(B)_Q}.
		\end{equation*}
		The statement follows easily by noting that the images of the forms $dt/t^j$
		for $j = 2, \ldots, j_{max}(i)$ form a basis of $\frac{t^{-j_{max}(i)} \Omega_{Y, Q}}{\Omega_Y(B)_Q}$.
		
	(2) Let $j_{min}(i)$ denote the least integer satisfying the inequality $pj - mi \ge 1$, i.e. $j_{min}(i) =
	\left\lceil \frac{mi+1}{p} \right\rceil$. Analogously, using \eqref{eqn:valuation_of_f_Zp} one obtains:
		\begin{equation*}
			H^1_Q \cong \bigoplus_{i = 0}^{p-1} y^i \frac{\mc O_Y(-B)_Q}{t^{j_{min}(i)} \mc O_{Y, Q}}.
		\end{equation*}
		To finish the proof of (2), it suffices to notice that the images of the elements
		$t^j$ for $j = 1, \ldots, j_{min}(i)-1$ form a basis of $\frac{\mc O_Y(-B)_Q}{t^{j_{min}(i)} \mc O_{Y, Q}}$.
		
	(3) In the sequel we will use several times the identity:
	\begin{equation} \label{eqn:jmin_jmax_identity}
		j_{max}(i) + j_{min}(i+1) = m+1 \qquad \textrm{ for every } 0 \le i \le p-2,
	\end{equation}
	which may be obtained from explicit formulas for $j_{max}(i)$ and $j_{min}(i)$ given above. Note that for any $h \in k(X)$ we have $\ord_P(dh) \ge \ord_P(h) - 1$.
	Using this fact for $h := y^p - y - \frac{1}{t^m}$ (which belongs to $\mf m_{X, P}^{2mp}$ by~\eqref{eqn:suitable_uniformizer}) we obtain $dy \equiv m \cdot dt/t^{m+1} \pmod{\mf m_{X, P}^{2mp - 1} \Omega_{X, P}}$,
		which easily implies that:
		\[
		y^i \cdot t^{m+1 - j} \, dy \equiv m \cdot y^i dt/t^j \pmod{\mf m_{X, P}^{(2mp - 1) + (- mi + (m+1-j)p)} \Omega_{X, P}}.
		\]
		Thus, since $(2mp - 1) + (- mi + (m+1-j)p) \ge -1$:
		\begin{align*}
			\frac{y^i \, dt}{t^j} - d \left( \frac{1}{(i+1) \cdot m} y^{i+1} t^{m+1 - j} \right)
			&\equiv - \frac{(m+1 - j)}{(i+1) \cdot m} y^{i+1} t^{m - j} \, dt\\
			&\equiv 0 \pmod{\Omega_X(R)_Q},
		\end{align*}
		since by~\eqref{eqn:valuation_of_diff_form}:
		\begin{align*}
			\ord_P(y^{i+1} t^{m - j} \, dt) &= -(i+1) \cdot m + (m-j) \cdot p
			+ (p-1) \cdot (m+1)\\
			&\ge -(p-1) \cdot m + 0 \cdot p
			+ (p-1) \cdot (m+1)\\
			&= p-1 \ge 1.
		\end{align*}
		Hence the listed elements define valid elements of $H^1_{dR, Q}$. Moreover:
		\begin{itemize}
			\item part (1) implies that the elements
			\[
			\left(\frac{y^i \, dt}{t^j}, \frac{1}{(i+1) \cdot m} y^{i+1} t^{m+1 - j} \right) =
			\left(\frac{y^i \, dt}{t^j}, 0 \right) \qquad \textrm{ in } H^1_{dR, Q}
			\]
			for $i = 0, \ldots, p-2$ and $j = 2, \ldots, j_{max}(i)$,
			are images of a basis of $H^0_Q$ through the map $H^0_Q \to H^1_{dR, Q}$ from Lemma~\ref{lem:properties_IXY_H1dRQ},
			\item[] (We used here that $y^{i+1} t^{m+1 - j} = 0$ in $H^1_Q$, since $m+1 - j \ge j_{min}(i+1)$ by~\eqref{eqn:jmin_jmax_identity}.)
			\item part (2) implies that the images of the elements:
			\[
			\left(\frac{y^i \, dt}{t^j}, \frac{1}{(i+1) \cdot m} y^{i+1} t^{m+1 - j} \right)
			\]
			for $i = 0, \ldots, p-2$ and $j = j_{max}(i) + 1, \ldots, m$ through the map $H^1_{dR, Q} \to H^1_Q$ form a basis
			of $H^1_Q$ (we used here~\eqref{eqn:jmin_jmax_identity} again).
		\end{itemize}
		Therefore the listed elements form a basis of $H^1_{dR, Q}$ by Lemma~\ref{lem:properties_IXY_H1dRQ}.
\end{proof}

\begin{proof}[Proof of Corollary~\ref{cor:cohomology_of_Zp}]
	For any $j$, let $i_{max}(j)$ denote the largest integer satisfying
	$mi + pj \le (m+1) \cdot (p-1) + 1$, i.e. $i_{max}(j) = \lfloor \frac{(m+1) \cdot (p-1) + 1 - pj}{m} \rfloor$. By Proposition~\ref{prop:H0Q_for_Zp}:
	\begin{align*}
		H^0_Q = \bigoplus_{j \ge 2} \frac{dt}{t^j} \cdot \Span_k(1, y, y^2, \ldots, y^{i_{max}(j)})
		\cong \bigoplus_{j \ge 2} J_{i_{max}(j) + 1}.
	\end{align*}
	For any $1 \le i \le p-1$:
	\begin{align*}
		\# \{j \ge 2 : i_{max}(j) + 1 = i \} &= \# \left\{j \ge 2 : 0 < i-\frac{(m+1) \cdot (p-1) + 1 - pj}{m} \le 1 \right\}\\
		&= \# \left\{j \ge 2 : (m+1) - \frac{m \cdot (i+1)}{p} < j \le (m+1) - \frac{m \cdot i}{p} \right\}\\
		&= \left\lceil \frac{m \cdot (i+1)}{p} \right\rceil - \left \lceil \frac{m \cdot i}{p} \right\rceil = \alpha_Q(i).
	\end{align*}
	Thus $H^0_Q = \bigoplus_{i = 1}^{p-1} J_i^{\alpha_Q(i)}$.
	This allows to find $H^1_Q$ using Lemma~\ref{lem:properties_H0Q_H1Q}~(1), since the representations $J_i$ are self-dual.
	We compute now $H^1_{dR, Q}$. Denote:
	\[
		\upsilon_{i, j} := \left(\frac{y^i \, dt}{t^j}, \frac{1}{(i+1) \cdot m} y^{i+1} t^{m+1 - j} \right) \in H^1_{dR, Q}
	\]
	for $0 \le i \le p-2$ and $2 \le j \le m$ (note that $\upsilon_{i, j}$ define a basis of $H^1_{dR, Q}$ by Proposition~\ref{prop:H0Q_for_Zp}~(3)). Note that for any $2 \le j \le m$ we have:
	\begin{align*}
		\tr_{X/Y}\left( \upsilon_{p-2, j} \right) = \left(0, \frac{1}{m} t^{m+1 - j} \right) =(0, 0),
	\end{align*}
	since $t^{m+1 - j} \in \mf m_{Y, Q}$. Hence, arguing as in and after the proof of Lemma~\ref{lem:Ji_and_Span_yi}, the $k[G]$-module generated by $\upsilon_{p-2, j}$ is $\Span_k(\upsilon_{i, j} : 0 \le i \le p-2) \cong J_{p-1}$.
	Thus, using Proposition~\ref{prop:H0Q_for_Zp}:
	\begin{align*}
		H^1_{dR, Q} = \bigoplus_{j = 2}^m \Span_k\left(g^*(\upsilon_{p-2, j}) : g \in G \right) \cong J_{p-1}^{\oplus (m - 1)}.
	\end{align*}
	The proof follows by Theorem~\ref{thm:main_thm}.
\end{proof}
\subsection{$\ZZ/p$-cover without a magical element} \label{subsec:no_magical_element}
The goal of this subsection is to construct a $\ZZ/p$-cover without a magical element
(and thus without a global standard form).
Let $p \ge 5$ be a prime and $k = \overline{\FF}_p$. Pick a general smooth projective curve $Y/k$ of genus $g_Y \ge p/2$.
Then its gonality equals $\gamma = \lfloor \frac{g_Y + 3}{2} \rfloor$ by Brill--Noether theory, see e.g.~\cite[Proposition A.1. (v)]{Poonen_Gonality_Modular}. 
Let $m$ and $M$ be integers satisfying $p \nmid m$ and:
\[
	\frac{2 g_Y}{p} < \frac{m}{p} < M < \gamma.
\]
One can take for example $M = \gamma - 1$ and $m = 2g_Y + 1$ or $m = 2g_Y + 2$ (depending on whether $p | 2g_Y + 1$).
Fix a point $Q \in Y(k)$ and let $t \in k(Y)$ be a uniformizer at $Q$. Using the fact that $H^1(Y, \mc O_Y(2 g_Y \cdot Q)) = 0$
(e.g. by Riemann--Roch theorem), we may choose a function $f \in k(Y)$ that is regular outside of~$Q$ and satisfies:
\[
	f \equiv \frac{1}{t^{Mp}} - \frac{1}{t^M} + \frac{1}{t^m} \pmod{\mf m_{Y, Q}^{-2g_Y}}.
\]
Indeed, by Lemma~\ref{lem:description_of_sheaf_cohomology} for $\mc F := \mc O_Y(2g_Y \cdot Q)$, $U := Y \setminus \{ Q \}$, $S = \{ Q \}$ and we have:
\[
	\coker \left( \mc O_Y(U) \to k(Y)/\mf m_{Y, Q}^{-2g_Y} \right) \cong H^1(Y, \mc O_Y(2g_Y \cdot Q)) = 0.
\]
In particular, there exists $f \in \mc O_Y(U)$, which maps to $\frac{1}{t^{Mp}} - \frac{1}{t^M} + \frac{1}{t^m}$ in $k(Y)/\mf m_{Y, Q}^{-2g_Y}$.\\

Let $\pi : X \to Y$ be the $\ZZ/p$-cover defined by the equation~\eqref{eqn:artin_schreier}. 
Suppose to the contrary that $z = \sum_{i = 0}^{p-1} z_i \cdot y^i \in k(X)$
is a magical element. By~\eqref{eqn:trace_of_dual_z} and~\eqref{eqn:AS_trace_of_yi} we may assume that $z_{p-1} = -\tr_{X/Y}(z) = 1$. Let $\pi^{-1}(Q) = \{ P \}$. Note that for $Q' \in Y(k)$, $Q' \neq Q$, we have $\ord_{Q'}(y) \ge 0$.
Therefore, by~\eqref{eqn:AS_trace_of_yi} and~\eqref{eqn:valuation_of_f_Zp}:
\[
	z_{p-2} = - \tr_{X/Y}(y \cdot z) + \tr_{X/Y}(y^p) \in \mc O_{Y, Q'}.
\]
Observe also that $y_1 := y - \frac{1}{t^M}$ is in local standard form at $Q$ and $m_Q = m$. Since $z = y_1^{p-1} + y_1^{p-2} \cdot (z_{p-2} - \frac{1}{t^M}) + \ldots$, we deduce:
\begin{align*}
	\ord_P \left(y_1^{p-2} \cdot (z_{p-2} - \frac{1}{t^M}) \right) &\ge
	 \ord_P(z) \ge -d_P' = -(p-1) \cdot m,
\end{align*}
where we used~\eqref{eqn:valuation_of_f_Zp} for the first inequality.
Thus
\[ \ord_Q \left(z_{p-2} - \frac{1}{t^M} \right) \ge -m/p, \]
and $z_{p-2} \in H^0(Y, \mc O_Y(M \cdot Q))$, $z_{p-2} \not \in k$.
On the other hand, by the definition of gonality, $H^0(Y, \mc O_Y(D)) \subset k$ for any divisor $D$ of degree less then $\gamma$. This yields
a contradiction with $M < \gamma$.

\section{Constructing magical elements} \label{sec:constructing_magical_elements}
In this section we show that a generic $p$-group cover has a magical element.
We prove this using the global standard form. The following observation shows that everything comes down to
constructing magical elements for $\ZZ/p$-covers.
\begin{Lemma} \label{lem:new_magical_elts}
	Suppose that $G$ is a finite $p$-group and $k$ is an algebraically closed field of characteristic $p$.
	Let $\pi : X \to Y$ be a $G$-cover of smooth projective curves over~$k$. Suppose that $\pi$ factors
	through a Galois cover $X' \to Y$. If $z_1 \in k(X)$ is a magical element for the cover $X \to X'$ and $z_2 \in k(X')$ 
	is a magical element for $X' \to Y$ then $z_1 \cdot z_2$ is a magical element
	for $X \to Y$.
\end{Lemma}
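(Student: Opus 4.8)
The plan is to exploit transitivity of the trace and of the different in the tower $X \to X' \to Y$. Set $N := \Gal(X/X')$; since $X' \to Y$ is Galois, $N$ is a normal subgroup of $G$, and $X \to X'$ is automatically Galois with group the $p$-group $N$. For $P \in X(k)$ I write $P'$ for its image on $X'$, and I regard $z_2$ as an element of $k(X)$ via the inclusion $k(X') \subset k(X)$. I will verify the two defining properties of a magical element for $X \to Y$ for the product $z_1 z_2$.

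For the trace condition, the key observation is that $c := \tr_{X/X'}(z_1)$ lies in $k^{\times}$. Indeed, applying the estimate~\eqref{eqn:valuation_of_trace} to the cover $X \to X'$: for each $P' \in X'(k)$ one has $\tr_{X/X'}(z_1) \in \mf m_{X', P'}^{\alpha}$ with $\alpha = \min\{\lfloor(\ord_P(z_1) + d_{X/X', P})/e_{X/X', P}\rfloor : P \mapsto P'\}$; since $\ord_P(z_1) \ge -d'_{X/X', P}$ and $d_{X/X', P} - d'_{X/X', P} = e_{X/X', P} - 1 \ge 0$, we get $\alpha \ge 0$, so $\tr_{X/X'}(z_1)$ is regular at every point, hence lies in $H^0(X', \mc O_{X'}) = k$, and it is nonzero by hypothesis. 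Now, since $z_2 \in k(X')$, transitivity of the trace and $k(X')$-linearity give
\[
\tr_{X/Y}(z_1 z_2) = \tr_{X'/Y}\big(\tr_{X/X'}(z_1 z_2)\big) = \tr_{X'/Y}(c \cdot z_2) = c \cdot \tr_{X'/Y}(z_2) \ne 0.
\]

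For the order condition, fix $P \in X(k)$. Since $z_2 \in k(X')$ we have $\ord_P(z_2) = e_{X/X', P} \cdot \ord_{P'}(z_2) \ge -e_{X/X', P} \cdot d'_{X'/Y, P'}$, while $\ord_P(z_1) \ge -d'_{X/X', P}$ by assumption, so $\ord_P(z_1 z_2) \ge -d'_{X/X', P} - e_{X/X', P} \cdot d'_{X'/Y, P'}$. It then suffices to establish the bookkeeping identity $d'_{X/Y, P} = d'_{X/X', P} + e_{X/X', P} \cdot d'_{X'/Y, P'}$. This follows from transitivity of the different, $d_{X/Y, P} = d_{X/X', P} + e_{X/X', P} \cdot d_{X'/Y, P'}$ (see~\cite[Ch.~III]{Serre1979}), together with the multiplicativity $e_{X/Y, P} = e_{X/X', P} \cdot e_{X'/Y, P'}$ and the relation $d_{Z/W, P} = d'_{Z/W, P} + (e_{Z/W, P} - 1)$, valid for any Galois cover $Z \to W$ of curves over $k$ since the inertia group at a point has order equal to the ramification index; substituting the three instances of the last relation into the different tower formula and cancelling $e_{X/Y, P} - 1$ yields the claim. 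Hence $\ord_P(z_1 z_2) \ge -d'_{X/Y, P}$ for every $P$, and $z_1 z_2$ is a magical element for $X \to Y$. I expect no serious obstacle: the only point requiring care is the last paragraph's compatibility of the $d'$-invariants in a tower, which reduces to transitivity of the different plus elementary bookkeeping, and I would double-check that the cited forms of~\eqref{eqn:valuation_of_trace} and of transitivity of the different apply verbatim here — they do, since all residue fields equal the algebraically closed field $k$.
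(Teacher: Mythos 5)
Your proof is correct and follows essentially the same route as the paper: transitivity of the trace (together with the fact that $\tr_{X/X'}(z_1)\in k^{\times}$, which the paper obtains from~\eqref{eqn:trace_of_dual_z}) for the nonvanishing condition, and transitivity of the different for the valuation bound. The only difference is that you spell out the bookkeeping identity $d'_{X/Y,P}=d'_{X/X',P}+e_{X/X',P}\cdot d'_{X'/Y,P'}$, which the paper leaves implicit in its citation of \cite[\S III.4, Proposition 8]{Serre1979}.
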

\begin{proof}
	Let $P \in \pi^{-1}(Q)$ and let $P'$ be the image of $P$ on $X'$. Then:
	\begin{align*}
		\ord_P(z_1 \cdot z_2) &= \ord_P(z_1) + e_{X/X', P} \cdot \ord_{P'}(z_2)\\
		&\ge -d_{X/X', P}' - e_{X/X', P} \cdot d_{X'/Y, P'}' = -d_P'.
	\end{align*}	
	(we used the transitivity of the different for the last equality, see e.g. \cite[\S III.4, Proposition 8]{Serre1979}).	Moreover, since $\tr_{X/X'}(z_1), \tr_{X'/Y}(z_2) \in k^{\times}$ by~\eqref{eqn:trace_of_dual_z}:
	\begin{align*}
		\tr_{X/Y}(z_1 \cdot z_2) = \tr_{X'/Y}( \tr_{X/X'}(z_1 \cdot z_2))
		= \tr_{X/X'}(z_1) \cdot \tr_{X'/Y}(z_2) \neq 0.
	\end{align*}
	This ends the proof.
\end{proof}
Let $k$, $G$, $\pi : X \to Y$ be as in Lemma~\ref{lem:new_magical_elts}. We say that $\pi$ has a \bb{global standard form}, if there exist $y_1, \ldots, y_n \in k(X)$ and a composition series
\begin{equation*} \label{eqn:composition_series}
0 = G_0 \unlhd G_1 \unlhd \ldots \unlhd G_n = G
\end{equation*}
such that $y_i$ is the Artin Schreier generator in global standard form of the $\ZZ/p$-cover $X/G_{i - 1} \to X/G_i$
for $i = 1, \ldots, n$. Lemmas~\ref{lem:gsf_gives_me_for_Zp} and~\ref{lem:new_magical_elts} imply that in this case $z := y_1^{p-1} \cdot \ldots \cdot y_n^{p-1} \in k(X)$ is a magical element for $\pi$. The notion of a global standard form of a $p$-group cover appeared already in~\cite{WardMarques_HoloDiffs}, where it was used to construct a basis of holomorphic differentials of $X$ in some cases.

\subsection{Generic $p$-group covers}
The goal of this subsection is to prove Theorem~\ref{thm:generic_intro}. Before the proof we review briefly the theory of moduli of $p$-group covers from~\cite{Harbater_moduli_of_p_covers}.\\

Let $U$ be a non-empty affine open of a smooth
projective curve $Y$ over $k$ and fix $u \in U(k)$. Denote $B := Y(k) \setminus U(k)$. Let $M_{U, G}$ be the moduli space of
pointed \'{e}tale $G$-covers of $(U, u)$, as defined by Harbater in~\cite{Harbater_moduli_of_p_covers}. Note that such covers correspond bijectively to pointed covers of $(Y, u)$ unramified over $U$.\\

In order to describe the structure of $M_{U, G}$, one proceeds inductively.
Let $H \cong \ZZ/p$ be a central normal subgroup of $G$ and let $G' := G/H$. Consider the map:
\[
	\pr : M_{U, G} \to M_{U, G'}, \quad (X \to Y) \mapsto (X/H \to Y).
\]
One can show that $M_{U, G}$ is a principal $M_{U, H}$-homogenous space over $M_{U, G'}$ (see e.g. proof of \cite[Theorem 1.2]{Harbater_moduli_of_p_covers}). The structure of a principal homogenous space is given by the map:
\[
M_{U, G} \times M_{U, H} \to M_{U, G}, \quad (X, Z) \mapsto X_Z,
\]
where $X : y_1^p - y_1 = f \in k(X')$, $Z : y_2^p - y_2 = g \in k(Y)$, $X_Z : y_3^p - y_3 = f + g$. Moreover, one proves that the map $\pr$ has
a (non-canonical) section and thus $M_{U, G} \cong M_{U, G'} \times M_{U, H}$. 
This allows one to show that $M_{U, G}$ is a direct limit of affine spaces.\\

The following lemma shows that (under some mild assumptions) the cover
$X_Z \to X'$ is at least as ramified as the cover $Z \to Y$.
\begin{Lemma} \label{lem:mQ_of_XZ}
	Let $X'$, $X$, $Z$ and $X_Z$ be as above.  Let also $P \in X'(k)$ and denote by $Q$ its image on $Y$.
	If $m_{Z/Y, Q} > m_{X/X', P} + d_{X'/Y, P}'$ then
	$m_{X_Z/X', P} > m_{Z/Y, Q}$. 
\end{Lemma}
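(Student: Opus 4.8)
The plan is to work entirely in the completed local ring at $P$ and compare orders of poles. Fix a uniformizer $t$ of $X'$ at $P$. We have $X_Z : y_3^p - y_3 = f + g$, where $f \in k(X')$ defines $X$ over $X'$ (so we may take $f$ in local standard form at $P$, giving $m_{X/X',P} = m_{X/X',P}$ via \eqref{eqn:mQ_formula}), and $g \in k(Y) \subseteq k(X')$ is the pullback of the Artin--Schreier datum for $Z \to Y$. First I would pull $g$ back to $X'$ and compute its pole order at $P$: if $g$ has a pole of order $n$ at $Q$ with $p \nmid n$, then by \eqref{eqn:valuation_of_diff_form} the pullback $g \in k(X')$ has $\ord_P(g) = -e_{X'/Y,P}\cdot n - d_{X'/Y,P}$, which is typically divisible by $p$ after pullback, so $g$ is \emph{not} in local standard form at $P$ as an element of $k(X')$. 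The key point is to extract the "true" conductor: using Lemma~\ref{lem:computing_mQ} (or directly the argument in its proof), one has $m_{Z/Y,Q} = -\ord_Q(dg) - 1$ under the relevant hypotheses, and more robustly $\ord_P(dg) = e_{X'/Y,P}\cdot \ord_Q(dg) + d_{X'/Y,P}$ by \eqref{eqn:valuation_of_diff_form} applied to differential forms. Thus $\ord_P(dg) = -e_{X'/Y,P}(m_{Z/Y,Q}+1) + d_{X'/Y,P}$; since $e_{X'/Y,P} = d_{X'/Y,P}' + 1 \ge 1$ and $d_{X'/Y,P} \le 2 d_{X'/Y,P}'$ (as $G_{P,2} \subseteq G_{P,1}$ forces $d'' \le d'$... actually $d_P = d_P' + (e_P-1)$), I would carefully record $\ord_P(dg)$ in terms of $m_{Z/Y,Q}$ and $d_{X'/Y,P}'$.

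Next I would compare $\ord_P(df)$ and $\ord_P(dg)$. From the local standard form of $f$ we get $\ord_P(df) = -(m_{X/X',P}+1)$ (up to the usual caveat when $m_{X/X',P}=0$, which I would handle separately — in that case $X \to X'$ is unramified at $P$ and the inequality reduces to a cleaner statement). Meanwhile $\ord_P(dg) \le -(m_{Z/Y,Q}+1)\cdot e_{X'/Y,P} + d_{X'/Y,P} \le -(m_{Z/Y,Q}+1) + d_{X'/Y,P}'$ after using $e_{X'/Y,P} \ge 1$ and $d_{X'/Y,P} - e_{X'/Y,P} + 1 = d_{X'/Y,P}'$... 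I would pin down the exact bound. The hypothesis $m_{Z/Y,Q} > m_{X/X',P} + d_{X'/Y,P}'$ is precisely engineered so that $\ord_P(dg) < \ord_P(df)$, hence $\ord_P(d(f+g)) = \ord_P(dg)$ by the ultrametric inequality for valuations on $\Omega$. Then I apply Lemma~\ref{lem:computing_mQ} to the generator $y_3$ of $X_Z \to X'$ at $P$: I must check the hypothesis \eqref{eqn:assumption_to_compute_mQ}, namely $\ord_P(d(f+g)) < \tfrac1p\ord_P(f+g) - 1$. Since $\ord_P(f+g)$ is very negative (of size roughly $p\cdot m_{Z/Y,Q}\cdot e_{X'/Y,P}$ because $g$'s pole has order divisible by $p$ after pullback) while $\ord_P(d(f+g))$ is only of size $m_{Z/Y,Q}\cdot e_{X'/Y,P}$, this inequality holds comfortably; I would verify it cleanly. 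Lemma~\ref{lem:computing_mQ} then yields $m_{X_Z/X',P} = -\ord_P(d(f+g)) - 1 = -\ord_P(dg) - 1 \ge (m_{Z/Y,Q}+1)\cdot e_{X'/Y,P} - d_{X'/Y,P} \ge m_{Z/Y,Q}+1 > m_{Z/Y,Q}$, using $e_{X'/Y,P} \ge d_{X'/Y,P}' + 1$ and rearranging, which gives the claim.

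The main obstacle I anticipate is the bookkeeping around pullbacks of Artin--Schreier data through the possibly-wildly-ramified cover $X' \to Y$: specifically, ensuring that after pullback the pole order of $g \in k(X')$ at $P$ is divisible by $p$ (so that $g$ alone contributes nothing to the conductor of $X_Z \to X'$ a priori, yet $dg$ detects the ramification), and getting the inequalities between $e_{X'/Y,P}$, $d_{X'/Y,P}$, $d_{X'/Y,P}'$ exactly right rather than off by the ramification index. A secondary subtlety is the degenerate case $m_{X/X',P} = 0$ and the case where $f$ itself might have a pole at $P$ of order divisible by $p$ before being put in local standard form — but replacing $f$ by a local-standard-form representative (allowed since this only changes $y_1$ by an element of $k(X')$ and doesn't change $X_Z$) disposes of this. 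Throughout I would lean on \eqref{eqn:valuation_of_diff_form} for both functions and differential forms, and on the transitivity of the different implicit in the relation $e_{X'/Y,P}$, $d_{X'/Y,P}$, $d_{X'/Y,P}'$.
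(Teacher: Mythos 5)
Your proposal takes essentially the same route as the paper: put both Artin--Schreier data in local standard form (at $P$ for $f$, at $Q$ for $g$), compute $\ord_P(dg)=e_{X'/Y,P}\cdot\ord_Q(g)+d'_{X'/Y,P}-1$ via \eqref{eqn:valuation_of_diff_form} (the paper does this through the logarithmic form $dg/g$, you do it directly on $dg$ -- equivalent), use the hypothesis to force $\ord_P(dg)<\ord_P(df)$ so that $\ord_P(d(f+g))=\ord_P(dg)$, verify \eqref{eqn:assumption_to_compute_mQ}, and conclude with Lemma~\ref{lem:computing_mQ}. The only quibble is an off-by-one in your last chain: $-\ord_P(dg)-1=e_{X'/Y,P}(m_{Z/Y,Q}+1)-d_{X'/Y,P}-1=e_{X'/Y,P}\,m_{Z/Y,Q}-d'_{X'/Y,P}$ (not $\ge e_{X'/Y,P}(m_{Z/Y,Q}+1)-d_{X'/Y,P}$), which still beats $m_{Z/Y,Q}$ exactly as in the paper's implicit final step, using $m_{Z/Y,Q}>d'_{X'/Y,P}$ at points where $e_{X'/Y,P}>1$.
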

\begin{proof}
	Assume that $y_1$ and $y_2$ are in local standard form at $P$ and $Q$ respectively.
	Note that (since $p \nmid \ord_Q(g)$):
	\begin{align*}
		\ord_P(dg/g) &= e_{X'/Y, P} \cdot \ord_Q(dg/g) + d_{X'/Y, P}\\
		&= - e_{X'/Y, P} + d_{X'/Y, P}.
	\end{align*}
    Therefore:
	\begin{equation} \label{eqn:ord_dg}
		\ord_P(dg) = e_{X'/Y, P} \cdot \ord_Q(g) + d_{X'/Y, P}' - 1.
	\end{equation}
	This yields:
	\[
		\ord_{P}(dg) = - e_{X'/Y, P} \cdot m_{Z/Y, Q} + d_{X'/Y, P}' - 1 < -m_{X/X', P} - 1 = \ord_{P}(df)
	\]
	and $\ord_{P}(df + dg) = \ord_{P}(dg)$. One easily checks that~\eqref{eqn:assumption_to_compute_mQ} holds for
	$g$ at $P$. Hence the result follows by Lemma~\ref{lem:computing_mQ}.
\end{proof}
In order to prove Theorem~\ref{thm:generic_intro} we need the following two topological lemmas. 
\begin{Lemma} \label{lem:topological}
	Let $f : \mc X \to \mc Y$ be an open continuous map between topological spaces. Suppose that $\mc U \subset \mc X$.
	If there exists a dense subset $\mc V \subset \mc Y$ such that
	$f^{-1}(y) \cap \mc U$ is dense in $f^{-1}(y)$ for every $y \in \mc V$, then $\mc U$ is dense in $\mc X$.
\end{Lemma}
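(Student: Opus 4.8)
The statement is a purely point-set-topological fact: if $f : \mc X \to \mc Y$ is open and continuous, $\mc U \subset \mc X$, and there is a dense $\mc V \subset \mc Y$ with $f^{-1}(y) \cap \mc U$ dense in $f^{-1}(y)$ for every $y \in \mc V$, then $\mc U$ is dense in $\mc X$. To prove density of $\mc U$ I will take an arbitrary nonempty open set $W \subset \mc X$ and show $W \cap \mc U \neq \varnothing$. First I would use openness of $f$: the set $f(W) \subset \mc Y$ is open and nonempty, hence by density of $\mc V$ there is a point $y \in f(W) \cap \mc V$. Fix any $x_0 \in W$ with $f(x_0) = y$.

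Now $W$ is an open neighbourhood of $x_0$, so $W \cap f^{-1}(y)$ is a nonempty relatively open subset of the fibre $f^{-1}(y)$. By hypothesis $f^{-1}(y) \cap \mc U$ is dense in $f^{-1}(y)$, so it meets every nonempty relatively open subset of $f^{-1}(y)$; in particular it meets $W \cap f^{-1}(y)$. Pick a point $x$ in the intersection $W \cap f^{-1}(y) \cap \mc U$. Then $x \in W \cap \mc U$, so $W \cap \mc U \neq \varnothing$. Since $W$ was an arbitrary nonempty open subset of $\mc X$, this shows $\mc U$ is dense.

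There is essentially no obstacle here — the only subtlety is to remember that ``$f^{-1}(y)\cap\mc U$ dense in $f^{-1}(y)$'' refers to density in the \emph{subspace} topology on the fibre, which is exactly why one intersects with $W\cap f^{-1}(y)$ rather than with $W$ directly, and that one needs a point of $f(W)$ lying in $\mc V$, which is where openness of $f$ and density of $\mc V$ enter. In the intended application, $f$ will be the projection $\pr : M_{U,G} \to M_{U,G'}$ of moduli spaces, $\mc U$ the locus of covers admitting a global standard form, and $\mc V$ a dense subset of $M_{U,G'}$ over which fibrewise density has already been established; the present lemma then upgrades fibrewise density to density in the total space.
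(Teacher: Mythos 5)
Your proof is correct and is essentially identical to the paper's: both pick an arbitrary nonempty open $W \subset \mc X$, use openness of $f$ and density of $\mc V$ to find $y \in f(W) \cap \mc V$, and then use density of $f^{-1}(y) \cap \mc U$ in the fibre to produce a point of $W \cap \mc U$. No gaps.
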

\begin{proof}
	Let $\mc Z$ be a non-empty open subset of $\mc X$, we will show that $\mc Z \cap \mc U \neq \varnothing$.
	Note that $f(\mc Z)$ is a non-empty open subset of $\mc Y$ and hence $f(\mc Z) \cap \mc V \neq \varnothing$.
	Let $y \in f(\mc Z) \cap \mc V$. Then $\mc Z \cap f^{-1}(y)$ is a non-empty open subset
	of $f^{-1}(y)$. By assumption, $f^{-1}(y) \cap \mc U$ is dense in $f^{-1}(y)$.
	Hence:
	\[
	(f^{-1}(y) \cap \mc Z) \cap (f^{-1}(y) \cap \mc U) \neq \varnothing,
	\]
	which proves the lemma.
\end{proof}
\begin{Lemma} \label{lem:open_maps_direct_limit}
	Let $A := \varinjlim_{i \in I} A_i$ and $B := \varinjlim_{i \in I} B_i$ be direct limits of topological spaces over a directed set $I$. Suppose that $f : A \to B$ is a direct limit of continuous open maps $f_i : A_i \to B_i$. Then $f$ is an open map.
\end{Lemma}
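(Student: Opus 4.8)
The plan is to reduce the statement to a direct verification against the final-topology description of the target colimit. Write $\phi_i\colon A_i\to A$ and $\psi_i\colon B_i\to B$ for the canonical maps, and $a_{ij},b_{ij}$ for the transition maps (for $i\le j$); recall the compatibility relations $f\circ\phi_i=\psi_i\circ f_i$ and $b_{ij}\circ f_i=f_j\circ a_{ij}$. Fix an open set $W\subseteq A$. Since $B$ carries the final topology with respect to the maps $\psi_j$, it suffices to show that $\psi_j^{-1}(f(W))$ is open in $B_j$ for every $j\in I$.

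The key step is to establish the identity
\[
\psi_j^{-1}(f(W)) \;=\; \bigcup_{i\ge j} b_{ji}^{-1}\!\bigl(f_i(\phi_i^{-1}(W))\bigr).
\]
The inclusion ``$\supseteq$'' is a formal consequence of the compatibility relations. For ``$\subseteq$'', given $b\in B_j$ with $\psi_j(b)=f(w)$ for some $w\in W$, I would write $w=\phi_{i_0}(a_0)$, then use directedness to choose $i\ge i_0,j$ and replace $a_0$ by its image $a\in A_i$, so that $w=\phi_i(a)$ with $i\ge j$ and $a\in\phi_i^{-1}(W)$. This gives $\psi_i(f_i(a))=f(\phi_i(a))=f(w)=\psi_j(b)=\psi_i(b_{ji}(b))$. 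Now the crucial point: the underlying set of $B$ is the \emph{filtered} colimit of the underlying sets of the $B_i$ (the forgetful functor $\mathrm{Top}\to\mathrm{Set}$ has a right adjoint, hence preserves colimits), so two elements of $B_i$ with the same image in $B$ must already agree after some transition map $b_{ii'}$. Applying $b_{ii'}$ and using compatibility yields $f_{i'}(a_{ii'}(a))=b_{ji'}(b)$ with $a_{ii'}(a)\in\phi_{i'}^{-1}(W)$, so $b$ lies in the right-hand side.

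Once the identity is available, the conclusion is immediate: for each $i\ge j$, the set $\phi_i^{-1}(W)$ is open because $\phi_i$ is continuous, $f_i(\phi_i^{-1}(W))$ is open because $f_i$ is an open map, and $b_{ji}^{-1}\!\bigl(f_i(\phi_i^{-1}(W))\bigr)$ is open because $b_{ji}$ is continuous; a union of open sets is open. As $j\in I$ was arbitrary, $f(W)$ is open in $B$, which is what we want.

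The main obstacle is the ``$\subseteq$'' inclusion of the displayed identity: one cannot deduce $f_i(a)=b_{ji}(b)$ inside $B_i$ from the mere equality of their images in $B$ — only equality after passing to a further index — so the argument genuinely relies on the index set being directed (making the set-level colimit filtered) and on the freedom to enlarge $i$. A minor point one must not skip is arranging at the start that the index $i$ with $w=\phi_i(a)$ can be taken $\ge j$, which is again handled by directedness.
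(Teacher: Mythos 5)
Your proof is correct and follows essentially the same route as the paper: the paper's proof consists precisely of the identity $\beta_i^{-1}(f(U)) = \bigcup_{j} \beta_{ij}^{-1}(f_j(\alpha_j^{-1}(U)))$ together with openness of each term, which is exactly your displayed identity. You additionally verify that identity in detail (in particular the filtered-colimit point that equality in $B$ only forces equality after a further transition map), which the paper leaves to the reader.
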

\begin{proof}
	Let $\alpha_{ij} : A_i \to A_j$, $\beta_{ij} : B_i \to B_j$ be the transition maps of the corresponding direct systems. Denote also by $\alpha_i : A_i \to A$ and $\beta_i : B_i \to B$ the natural maps to the direct limits.
	Let $U \subset A$ be an open subset. In order to show that $f(U)$ is open,
	we need to prove that $\beta_i^{-1}(f(U))$ is open in $B_i$ for every $i \in I$. This follows from the identity:
	\[
	\beta_i^{-1}(f(U)) = \bigcup_{j > i} \beta_{ij}^{-1}(f_j(U_j)),
	\]
	where $U_j := \alpha_j^{-1}(U)$ for every $j \in I$.
\end{proof}

\begin{proof}[Proof of Theorem~\ref{thm:generic_intro}]
	Recall that any $G$-cover with global standard form satisfies~\ref{enum:B} (see the discussion at the beginning of Section~\ref{sec:constructing_magical_elements}). Thus it suffices to prove that the set of $G$-covers of $Y$ unramified over~$U$ with global standard form that satisfy~\ref{enum:A} (which we denote by $S_{U, G}$)
	is dense in $M_{U, G}$. We show this by induction on $\# G$. For $\# G = 1$ this is trivial. Let $G'$ and $H$ be as above.
	By induction hypothesis, the set $S_{U, G'}$ is dense in $M_{U, G'}$.
	By Lemma~\ref{lem:criterion_for_gsf}
	for every $X' \in S_{U, G'}$ the covers from the set
	\[
	\mc A := \{ X \in \pr^{-1}(X') : m_{X/X', P'} > 2g_{X'} \cdot p \qquad \forall_{P' \in B' := \textrm{ preimage of } B \textrm{ in } X'(k)} \} 
	\]
	have global standard form. Moreover, for every $X \in \mc A$
	and $P \in X(k)$, we have $H_P = H$. Hence for every $P \in X'(k)$, $G_P$ is the preimage of $G_{P'}'$ by the map $G \to G'$ (where $P' \in X'(k)$ is the image of $P$). This implies that $G_P$ is a normal subgroup of $G$ for every $P \in X(k)$. Thus $\mc A \subset \pr^{-1}(X') \cap S_{U, G}$.\\
	On the other hand, by Lemma~\ref{lem:mQ_of_XZ} the set $\mc A$ contains the set:
	\[
	\mc B := \{ X_Z : Z \in M_{U, H}, \, \, m_{Z/Y, Q} > N_0 \quad \forall_{Q \in B} \}
	\]
	for any fixed $X \in \pr^{-1}(X')$ and sufficiently large $N_0$. But the set $\{ Z \in M_{U, H} : m_{Z/Y, Q} > N_0 \quad \forall_{Q \in B} \}$ is dense in
	$M_{U, H}$. Indeed, the considered subset is contained in a complement of a finite dimensional subspace in $M_{U, H}$. 
	Finally, note that the map $\pr : M_{U, G} \to M_{U, G'}$ is open. Indeed, it might be identified with the limit of projections $\varinjlim_i \AA^{n_i + m_i} \to \varinjlim_i \AA^{n_i}$
	and all of them are open by \cite[tag 037G]{stacks-project}.
	Thus the proof follows by Lemmas~\ref{lem:topological} and~\ref{lem:open_maps_direct_limit}.
\end{proof}

\subsection{Example: Heisenberg group covers} \label{subsec:example_heisenberg}
Consider the Heisenberg group $\textrm{mod } p$:
\[
E(p^3) = \langle a, b, c : a^p = b^p = c^p = e, c = [a, b], c \in Z(E(p^3)) \rangle.
\]
Let $Y$ be a smooth projective curve over $k$ and let $f_1, f_2, f_3 \in k(Y)$.
Define $X$ to be a smooth projective curve with the function field $k(Y)(y_1, y_2, y_3)$, where:
\begin{equation} \label{eqn:Ep3cover}
\begin{aligned} 
	y_1^p - y_1 &= f_1,\\
	y_2^p - y_2 &= f_2,\\
	y_3^p - y_3 &= f_3 + f_2 \cdot (y_1 - y_2).
\end{aligned}	
\end{equation}
Then $E(p^3)$ acts on $X$ via the formulas:
\begin{align*}
	a^*(y_1, y_2, y_3) &= (y_1 + 1, y_2, y_3 + y_2)\\
	b^*(y_1, y_2, y_3) &= (y_1 + 1, y_2 + 1, y_3)\\
	c^*(y_1, y_2, y_3) &= (y_1, y_2, y_3 - 1).
\end{align*}
It turns out that every $E(p^3)$-cover of $Y$ is of this form (this follows from results of Saltman, cf.~\cite{Saltman_Noncrossed_product}).\\

We use now this description of $E(p^3)$-covers to give an explicit example of covers
satisfying~\ref{enum:A} and~\ref{enum:B}.
Suppose that $p > 2$ and $Y$ is the elliptic curve:
\begin{equation*} %\label{eqn:elliptic_curve}
	Y : w^2 = (x - \alpha_1) \cdot (x - \alpha_2) \cdot (x - \alpha_3)
\end{equation*}
over $k$, where $\alpha_1, \alpha_2, \alpha_3 \in k$ are any distinct elements. Pick any natural numbers $a_1, a_2, b_2, a_3, b_3, c_3$, which are non-divisible by $p$ and satisfy:
\[
a_1 > p, \quad a_2 > 4a_1 \cdot p, \quad a_3 > 4 \cdot p \cdot (a_1 + a_2 + b_2), \quad b_3 > 4 b_2.
\]
Define $X$ to be the $E(p^3)$-cover of $Y$ given by~\eqref{eqn:Ep3cover}, where:
\begin{align*}
	f_1 &= (x - \alpha_1)^{-a_1},\\
	f_2 &= (x - \alpha_1)^{-a_2} \cdot (x - \alpha_2)^{-b_2},\\
	f_3 &= (x - \alpha_1)^{-a_3} \cdot (x - \alpha_2)^{-b_3} \cdot (x - \alpha_3)^{-c_3}.
\end{align*}
\begin{Corollary} \label{cor:example_from_intro}
	The $E(p^3)$-cover $X \to Y$ given as above satisfies the conditions~\ref{enum:A} and \ref{enum:B}. Moreover:
	\[
	G_{Q_1} \cong E(p^3), \quad G_{Q_2} \cong \ZZ/p \times \ZZ/p, \quad G_{Q_3} \cong \ZZ/p,
	\]
	where $Q_i := (\alpha_i, 0) \in Y(k)$.
\end{Corollary}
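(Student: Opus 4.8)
The plan is to verify the two conditions of Theorem~\ref{thm:main_thm} for the specific $E(p^3)$-cover $X\to Y$, by factoring it into a tower of $\ZZ/p$-covers and checking that each layer is in global standard form. Set $G_1' := \langle c\rangle$, $G_2' := \langle b, c\rangle$ (so $G_1'\unlhd G_2'\unlhd E(p^3)$ is a composition series), and consider the intermediate curves $X' := X/G_1'$ with function field $k(Y)(y_1,y_2)$ and $X'' := X/G_2'$ with function field $k(Y)(y_1)$. Then $X\to Y$ factors as $X\to X'\to X''\to Y$, where $X''\to Y$ is the $\ZZ/p$-cover $y_1^p-y_1=f_1$, the cover $X'\to X''$ is the $\ZZ/p$-cover $y_2^p-y_2=f_2$, and $X\to X'$ is the $\ZZ/p$-cover $y_3^p-y_3 = f_3 + f_2\cdot(y_1-y_2)$. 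By the discussion at the beginning of Section~\ref{sec:constructing_magical_elements} (Lemmas~\ref{lem:gsf_gives_me_for_Zp} and~\ref{lem:new_magical_elts}), it suffices to show that $y_1$, $y_2$, $y_3' := y_3$ are Artin--Schreier generators in global standard form for these three layers, and separately that condition~\ref{enum:A} holds.

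The computation of the ramification is essentially local at the three points $Q_1 = (\alpha_1,0)$, $Q_2 = (\alpha_2,0)$, $Q_3 = (\alpha_3,0)$, since $f_1, f_2, f_3$ have poles only there (on $Y$). For the layer $X''\to Y$: the function $f_1 = (x-\alpha_1)^{-a_1}$ has a pole only at $Q_1$; since $x-\alpha_1$ has a double zero at $Q_1$ (ramification point of $Y\to\PP^1$), one has $\ord_{Q_1}(f_1) = -2a_1$, which is divisible by $p$ only if $p\mid 2a_1$, i.e.\ never by hypothesis, so $y_1$ is in local standard form at $Q_1$; at the two other branch points $f_1$ is regular, hence $y_1$ is already in global standard form, $m_{X''/Y, Q_1} = 2a_1$, and $G_{Q_1}'' \cong \ZZ/p$ with the other points unramified in this layer. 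For the layer $X'\to X''$ one writes $f_2$ in terms of a uniformizer at the preimages of $Q_1$ and $Q_2$ on $X''$ and uses~\eqref{eqn:mQ_formula}, \eqref{eqn:valuation_of_diff_form}, and Lemma~\ref{lem:computing_mQ} together with the inequality $a_2 > 4a_1 p$ to ensure that $y_2$ (possibly after a standard-form adjustment) is in global standard form and that the ramification at $Q_2$ appears; at $Q_3$ this layer is unramified. For the top layer $X\to X'$ one must estimate $\ord_{P}\!\big(d(f_3 + f_2(y_1-y_2))\big)$ at points $P$ of $X'$ above $Q_1, Q_2, Q_3$; here the term $f_3 = (x-\alpha_1)^{-a_3}(x-\alpha_2)^{-b_3}(x-\alpha_3)^{-c_3}$ dominates because of $a_3 > 4p(a_1+a_2+b_2)$ and $b_3 > 4b_2$, so that $d(f_2(y_1-y_2))$ is negligible and Lemma~\ref{lem:computing_mQ} again gives the pole order; this produces ramification at all three of $Q_1, Q_2, Q_3$ in the top layer. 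Combining the three layers: $G_{Q_1}$ surjects onto each of the three $\ZZ/p$-quotients, giving $G_{Q_1} \cong E(p^3)$; $G_{Q_2}$ is trivial in the layer $X''\to Y$ but full in the other two, so $G_{Q_2}$ maps onto $\langle b,c\rangle/\langle c\rangle$ trivially — wait, one gets $G_{Q_2}$ generated by $b$ and $c$, hence $\cong\ZZ/p\times\ZZ/p$; and $G_{Q_3}$ is trivial in the bottom two layers and full in the top, so $G_{Q_3}\cong\ZZ/p$, generated by $c$.

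For condition~\ref{enum:A}: the stabilizers $G_{Q_1} = E(p^3)$, $G_{Q_2} = \langle b,c\rangle$, $G_{Q_3} = \langle c\rangle$ are all normal in $E(p^3)$ ($\langle c\rangle$ is the center, $\langle b,c\rangle$ has index $p$), and at any point $P$ with $P\notin\pi^{-1}(\{Q_1,Q_2,Q_3\})$ the stabilizer is trivial; so~\ref{enum:A} holds. For the ``moreover'' part, the decomposition $G_{Q_i}$ is read off as above. The main obstacle I expect is the top layer: one must carefully track how the uniformizer and the valuations of $y_1, y_2$ at a point of $X'$ above $Q_i$ interact with $d\big(f_2(y_1-y_2)\big)$, and verify that the numerical hypotheses on $a_1,a_2,b_2,a_3,b_3,c_3$ are exactly what is needed to apply Lemma~\ref{lem:computing_mQ} (i.e.\ that~\eqref{eqn:assumption_to_compute_mQ} holds) and to guarantee that no cancellation spoils the pole order; everything else is a routine application of the standard Artin--Schreier local theory and Lemma~\ref{lem:computing_mQ}.
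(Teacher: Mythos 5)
Your overall plan is the same as the paper's: factor $X \to Y$ into a tower of $\ZZ/p$-covers, get a global standard form layer by layer, combine via Lemmas~\ref{lem:gsf_gives_me_for_Zp} and~\ref{lem:new_magical_elts}, and read off the stabilizers. However, there are two problems. The first is a bookkeeping error in your composition series: $X'':=X/\langle b,c\rangle$ does \emph{not} have function field $k(Y)(y_1)$. Since $b^*$ shifts both $y_1$ and $y_2$ by $1$, the fixed field of $\langle b,c\rangle$ is $k(Y)(y_1-y_2)$ (an Artin--Schreier cover with equation $w^p-w=f_1-f_2$, ramified over both $Q_1$ and $Q_2$); the subgroup fixing $k(Y)(y_1)$ is $\langle ab^{-1},c\rangle$. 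So either your subgroups or your layer equations must be changed; the tower actually used is $k(Y)\subset k(Y)(y_1)\subset k(Y)(y_1,y_2)\subset k(X)$, corresponding to $1\unlhd\langle c\rangle\unlhd\langle ab^{-1},c\rangle\unlhd E(p^3)$. Your identification of $G_{Q_2}$ as $\langle b,c\rangle$ inherits this error (it is $\langle ab^{-1},c\rangle$), though the isomorphism type claimed in the Corollary is unaffected.

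The substantive gap is that you never supply the mechanism producing a \emph{global} standard form for the middle and top layers, which is the actual content of the proof. The given generators are not in local standard form at the totally ramified points: above $Q_1$ the pullback of $f_2$ to the intermediate curve has pole order $2a_2p$, divisible by $p$, and similarly for the top layer; your parenthetical ``possibly after a standard-form adjustment'' is exactly the point at issue, since local adjustments always exist but a simultaneous global one need not (cf.\ Subsection~\ref{subsec:no_magical_element}). The paper's tool is Lemma~\ref{lem:criterion_for_gsf}: one point with $m_{Q_0}>2g\cdot p$ forces a global standard form. Verifying its hypothesis at each layer requires (i) computing $m$ at the point over $Q_1$ via Lemma~\ref{lem:computing_mQ}, \eqref{eqn:ord_dg} and the transitivity of the different, and (ii) genus bounds for the intermediate curves via \eqref{eqn:genus_AS} (e.g.\ $g_{X_1}<3a_1p$ and $g_{X_2}<3p^2(a_1+a_2+b_2)$); this is precisely where the numerical hypotheses $a_1>p$, $a_2>4a_1p$, $a_3>4p(a_1+a_2+b_2)$ are consumed, and it is the part you explicitly defer (``the main obstacle I expect\dots''). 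Without citing Lemma~\ref{lem:criterion_for_gsf} and doing the genus bookkeeping, the existence of magical elements for the upper two layers is not established. Two smaller omissions: irreducibility of $X$ (the paper deduces it from $G_{Q_1}=E(p^3)$), and the verification that $m_{X/X_2,P}>0$ over $Q_2$ and $Q_3$, which is what pins down $G_{Q_2}\cong\ZZ/p\times\ZZ/p$ and $G_{Q_3}\cong\ZZ/p$.
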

\begin{proof}
	In the proof we will estimate genus of a curve in a tower of covers in the following way. If $Z \to Y$ is
	an Artin--Schreier cover with the equation~\eqref{eqn:artin_schreier} then Riemann--Hurwitz formula yields:
	\begin{equation} \label{eqn:genus_AS}
		g_Z < p \cdot (g_Y + \deg_Y f),
	\end{equation}
	where $\deg_Y f$ is the degree of $f$ treated as a morphism $Y \to \PP^1$.\\
	
	Let $X_1$ be the $\ZZ/p$-cover of $Y$, given by $y_1^p - y_1 = f_1$
	and let $X_2$ be the $\ZZ/p \times \ZZ/p$-cover of $Y$, given by:
	\[
	y_1^p - y_1 = f_1, \quad y_2^p - y_2 = f_2.
	\]
	Note that by~\eqref{eqn:mQ_formula}:
	\[
	m_{X_1/Y, Q_1} = 2a_1 > 2g_Y \cdot p
	\]
	and hence $X_1 \to Y$ has a global standard form by Lemma~\ref{lem:criterion_for_gsf}.
	By~\eqref{eqn:genus_AS}, $g_{X_1} < p \cdot (1 + 2a_1) < 3a_1 \cdot p$.
	One checks that $f_2$ satisfies the condition~\eqref{eqn:assumption_to_compute_mQ}.
	Hence, using Lemma~\ref{lem:computing_mQ} and~\eqref{eqn:ord_dg},
	for $P \in X_1(k)$ in the preimage of $Q_1$:
	\begin{align*}
		m_{X_2/X_1, P} &= -\ord_P(df_2) - 1\\
		&= p \cdot 2a_2 - d_{X_1/Y, P}' - 1\\
		&= p \cdot 2a_2 - (p-1) \cdot 2 a_1 - 1\\
		&> 2g_{X_1} \cdot p. 
	\end{align*}
	Hence, by Lemma~\ref{lem:criterion_for_gsf}, $X_2 \to X_1$ also has a global standard form.
	Note now that by~\eqref{eqn:genus_AS}:
	\begin{align*}
		g_{X_2} &< p \cdot (g_{X_1} + \deg_{X_1} f_2) < 3p^2 \cdot (a_1 + a_2 + b_2).
	\end{align*}
	Using the transitivity of the different (see e.g. \cite[\S III.4, Proposition 8]{Serre1979}), the equality $m_{X_1/Y, Q_1} = 2a_1$ and inequality $m_{X_2/X_1, P} < p \cdot 2a_2$ for $P \in X_1(k)$ in the preimage of $Q_1$, we obtain:
	\begin{align*}
		d'_{X_2/Y, Q_1} = p \cdot d'_{X_1/Y, Q_1} + d'_{X/X_1, Q_1} &< 2p^2 \cdot (a_1 + a_2).
	\end{align*}
	Therefore, by~\eqref{eqn:ord_dg}, for any $P \in X_2(k)$ in the preimage of $Q_1$:
	\begin{align*}
		-\ord_P(df_3) = -p^2 \cdot \ord_{Q_1}(f_3) - d'_{X_2/Y, Q_1} + 1 &> p^2 \cdot 2a_3 - 2p^2 \cdot (a_1 + a_2).
	\end{align*}
	One easily checks that~\eqref{eqn:assumption_to_compute_mQ} is satisfied for
	$f_3 + (y_1 - y_2) \cdot f_2$ and that $\ord_P(d(f_3 + (y_1 - y_2) \cdot f_2)) = \ord_P(df_3)$.
	Hence:
	\begin{align*}
		m_{X/X_2, P} = -\ord_P(d(f_3 + (y_1 - y_2) \cdot f_2)) - 1
		> 2g_{X_2} \cdot p.
	\end{align*}
	Therefore $X \to Y$ has a global standard form. Also, $G_{Q_1} = E(p^3)$, which implies that $X$ is an irreducible curve (recall that any connected Artin-Schreier cover is irreducible). Moreover, one checks that $m_{X/X_2, P} > 0$ for $P \in X(k)$ in the preimage of $Q_2$. Hence $G_{P} = \ZZ/p \times \ZZ/p$ (since $X \to X_1$ is totally ramified over $P$ and $X_1 \to Y$ is unramified
	over $Q_2$). Analogously, $G_{Q_3} = \ZZ/p$. This finishes the proof. 
\end{proof}

\bibliography{bibliografia}
\end{document}